\newtheorem{Theorem}{Theorem}[section]
\newtheorem{Corollary}[Theorem]{Corollary}
\newtheorem{Lemma}[Theorem]{Lemma}
\theoremstyle{remark}
\newtheorem{Remark}[Theorem]{Remark}
\numberwithin{equation}{section}
\def\ev{\operatorname{ev}}
\def\op{{\operatorname{op}}}
\def\Tw{\operatorname{T}}
\def\cE{e}
\def\cEx{E}
\def\cF{f}
\def\cM{M}
\def\cD{D}
\def\cX{X}
\def\cZ{Z}
\def\cP{P}
\def\cY{Y}
\def\cL{L}
\def\cS{S}
\def\cT{T}
\def\qH{\mathscr{H}}
\def\rep#1{\operatorname{Rep}(#1)}
\def\proj#1{\operatorname{Proj}(#1)}
\def\id{\operatorname{id}}
\def\ev{{\operatorname{ev}}}
\def\C{{\mathbb C}}
\def\Q{{\mathbb Q}}
\def\Z{{\mathbb Z}}
\def\0{{\bar 0}}
\def\1{{\bar 1}}
\def\A{{\mathscr A}}
\def\hom{{\operatorname{Hom}}}
\def\End{{\operatorname{End}}}
\def\Col{{\operatorname{Col}}}
\def\Std{{\operatorname{Std}}}
\def\RStd{{\operatorname{Rev}}}
\def\ch{{\operatorname{ch}\:}}
\def\wt{{\operatorname{wt}}}
\def\height{{\operatorname{ht}}}
\def\inf{{\operatorname{inf}}}
\def\bi{\text{\boldmath$i$}}
\def\bm{\text{\boldmath$m$}}
\def\eps{{\varepsilon}}
\def\phi{{\varphi}}
\def\la{{\lambda}}
\def\La{{\Lambda}}
\def\al{{\alpha}}
\def\bpar{\la}
\def\rpar{\la}
\def\underbar{\mathpalette\@underbar}
\def\@underbar#1#2{\settowidth{\@tempdimb}{$#1#2$}\@tempdimb=0.8\@tempdimb
                   \ooalign{$#1#2$\crcr%
                         \hfil\rule[-.5mm]{\@tempdimb}{.4pt}\hfil}}
\newdimen\hoogte    \hoogte=12.5pt
\newdimen\breedte   \breedte=12.5pt
\newdimen\dikte     \dikte=0.5pt
\newenvironment{young}{\begingroup
       \def\vr{\vrule height0.8\hoogte width\dikte depth 0.2\hoogte}
       \def\fbox##1{\vbox{\offinterlineskip
                    \hrule height\dikte
                    \hbox to \breedte{\vr\hfill##1\hfill\vr}
                    \hrule height\dikte}}
       \vbox\bgroup \offinterlineskip \tabskip=-\dikte \lineskip=-\dikte
            \halign\bgroup &\fbox{##\unskip}\unskip  \crcr }
       {\egroup\egroup\endgroup}
\def\diagram#1{\relax\ifmmode\vcenter{\,\begin{young}#1\end{young}\,}\else%
              $\vcenter{\,\begin{young}#1\end{young}\,}$\fi}
\begin{document}

\title[Ariki's categorification theorem]{The degenerate analogue of Ariki's categorification theorem}
\author{Jonathan Brundan}
\address{Department of Mathematics\\University of Oregon\\Eugene OR 97403\\USA}
\email{brundan@uoregon.edu}
\author{Alexander Kleshchev}
\address{Department of Mathematics\\University of Oregon\\Eugene OR 97403\\USA}
\email{klesh@uoregon.edu}
\subjclass{17B20.} 
\thanks{Research supported in part by NSF grant no. DMS-0654147.}
\keywords{Categorification, cyclotomic Hecke algebra,
parabolic category $\mathcal O$}

\begin{abstract}
We explain how to deduce the degenerate analogue of
Ariki's categorification theorem
over the ground field $\C$
as an application of 
Schur-Weyl duality for higher levels
and the Kazhdan-Lusztig conjecture in finite type A.
We also discuss some supplementary topics, including
Young modules, tensoring with sign, tilting modules and Ringel duality.
%\tableofcontents
\end{abstract}
\maketitle

\section{Introduction}

Ariki's categorification theorem proved in \cite{Ari}
is concerned
with the representation theory of 
cyclotomic Hecke algebras 
of type $G(l,1,d)$.
For $l=1$, the result was conjectured earlier
by Lascoux, Leclerc and Thibon \cite{LLT}, and
a proof was also announced (but never published) 
by Grojnowski following \cite{Gnote}.

For $d \geq 0$ and a
dominant integral weight $\La$
of level $l$ for the affine Lie algebra $\widehat{\mathfrak{sl}}_e(\C)$,
there is an associated cyclotomic Hecke algebra
$\qH_d^\La$, namely, the quotient of the 
affine Hecke algebra $\qH_d$
associated to the general linear group $GL_d$
by the two-sided ideal generated by the element
$$
(X_1 - \xi^{m_1}) \cdots (X_1 - \xi^{m_l}).
$$
Here, the defining parameter $\xi$ of the Hecke algebra $\qH_d$
should be taken to be a primitive complex $e$th root of unity,
$m_1,\dots,m_l \in \Z / e\Z$
are defined by expanding $\La=\La_{m_1}+\cdots + \La_{m_l}$
in terms of fundamental dominant
weights, and we are using the
Bernstein presentation for $\qH_d$.

The categorification theorem asserts that the direct sum
$$
\bigoplus_{d \geq 0} [\proj{\qH_d^\La}]
$$
of the Grothendieck groups of the categories of 
finitely generated projective modules over these algebras
can be identified
with the standard $\Z$-form 
$V(\La)_\Z$
for the irreducible highest weight
$\widehat{\mathfrak{sl}}_e(\C)$-module of highest weight $\La$.
Under the identification, the basis for the Grothendieck group arising from
projective indecomposable 
modules corresponds to Lusztig's canonical basis,
and the actions of the Chevalley generators of $\widehat{\mathfrak{sl}}_e(\C)$
correspond to certain $i$-induction and $i$-restriction functors.
Moreover, there is another natural family of
$\qH_d^\La$-modules,
the so-called
{\em Specht modules}, such that
the {decomposition matrix} describing 
composition multiplicities of Specht modules is the
transpose of the matrix describing the
expansion of the canonical basis 
for $V(\La)_\Z$ in terms of
the {monomial basis} of an appropriate level
$l$ Fock space $F(\La)_\Z$ containing $V(\La)_\Z$ as a submodule.

All of this also makes sense if $\xi$ is generic (not a root of unity),
replacing $\widehat{\mathfrak{sl}}_e(\C)$ 
with $\mathfrak{gl}_\infty(\C)$ so that
$m_1,\dots,m_l$ are ordinary integers
rather than integers modulo $e$.
Even in the generic case, the proof
is highly non-trivial (in levels bigger than one),
depending ultimately on the 
$p$-adic analogue of the Kazhdan-Lusztig conjecture formulated in
\cite{Z1},
which is a special case of the Deligne-Langlands conjecture proved in 
\cite{KL2} (see also \cite[Chapter 8]{CG}), 
as well as Lusztig's geometric construction of canonical
bases from \cite{Lu0}.

This article is concerned with the
analogue of Ariki's categorification theorem for the {\em degenerate
cyclotomic Hecke algebra} $H^\La_d$,
namely,
the quotient of the degenerate affine Hecke algebra $H_d$
from \cite{D} by the two-sided ideal generated by the element
$$
(x_1-m_1)\cdots (x_1-m_l).
$$
Here, $m_1,\dots,m_l \in \Z$ arise as before by 
writing the dominant integral weight $\La$ for $\mathfrak{gl}_\infty(\C)$
as $\La = \La_{m_1}+\cdots+\La_{m_l}$.
Remarkably, by \cite[Corollary 2]{young}, the degenerate cyclotomic Hecke
algebra $H^\La_d$ is 
{\em isomorphic} to the non-degenerate cyclotomic Hecke 
algebra $\qH^\La_d$
for generic $\xi$.
Combining this isomorphism 
with the cellular algebra structure on $H^\La_d$ from \cite[$\S$6]{AMR},
which provides an intrinsic notion of Specht module in the degenerate
setting, the degenerate analogue of Ariki's categorification theorem
follows almost at once from Ariki's theorem 
for generic $\xi$ (and vice versa).
It should also be possible 
to give a direct proof of the degenerate analogue 
without appealing to the isomorphism
$H^\La_d \cong \qH^\La_d$ by
using Lusztig's geometric approach to the representation theory of $H_d$
from  \cite{Lc, Ld} to obtain the key multiplicity formula
for standard modules in terms of intersection cohomology.

The goal in the remainder of the article is to explain a completely
different way to prove
the degenerate analogue of Ariki's categorification
theorem, based instead on the Schur-Weyl duality for higher levels
developed in \cite{schur}.
This duality is a generalization of classical Schur-Weyl duality,
in which the group algebra of the symmetric group
gets replaced by the
degenerate cyclotomic Hecke algebra $H_d^\La$,
and the category of polynomial representations of the
general linear group gets is replaced by certain
blocks of a parabolic analogue of the Bernstein-Gelfand-Gelfand
category $\mathcal O$ from \cite{BGG}. 
A great deal is known about parabolic
category $\mathcal O$, thanks in part to 
the Kazhdan-Lusztig conjecture formulated in \cite{KL1} that
was proved in
\cite{BB, BrK}.
Pushing this detailed information through Schur-Weyl duality, we recover
in a very tidy way almost all known results about the 
representation theory
of the degenerate cyclotomic Hecke algebras over the ground field $\C$,
including the desired categorification theorem.

This means that there are two quite different
ways to describe composition multiplicities of Specht modules
in the degenerate case, 
one in terms of intersection cohomology of closures of nilpotent orbits
of linear quivers, and the other in terms of
parabolic Kazhdan-Lusztig polynomials associated to the symmetric group.
The same coincidence has been observed before at the affine level:
in \cite{AS} Arakawa and Suzuki
explained how to express
the multiplicities of standard modules 
of degenerate affine Hecke algebras 
in terms of finite type A Kazhdan-Lusztig polynomials
by means of another variation on Schur-Weyl duality involving
category $\mathcal O$.
The geometric 
explanation behind all these coincidences comes from a classic result of
Zelevinsky \cite{Zel}; we refer the reader to a recent paper of
Henderson \cite{H} for a comprehensive account (and extensions to the
root of unity case).

In the Schur-Weyl duality approach,
parabolic category $\mathcal O$ fits in very nicely into the
categorification picture: its Grothendieck group plays the
role of the level $l$ Fock space 
$F(\La)_\Z$ mentioned above
and the embedding of $V(\La)_\Z$ into $F(\La)_\Z$ corresponds
at the level of categories
to a ``Schur functor''
which maps projective $H_d^\La$-modules to 
projective modules in parabolic category $\mathcal O$.
This can be regarded as the extension
to higher levels in the degenerate case
of a conjecture of Leclerc and Thibon \cite{LT}, which was proved
in level one in \cite{VV}.

There is actually a whole family of Schur-Weyl dualities
relating various different type A parabolic category $\mathcal O$'s to the 
Hecke algebra $H_d^\La$, one for each permutation of the
sequence $m_1,\dots,m_l$ (corresponding to non-conjugate parabolic
subalgebras with conjugate Levi factors). 
We usually consider only the
standard one in which $m_1 \geq \cdots \geq m_l$, as this 
leads to the most familiar combinatorics.
In the other cases the parabolic category $\mathcal O$ that appears
is usually not equivalent to the standard one, but it 
is always derived equivalent.
This leads to some interesting twisted versions of the theory.
We have included in the article some discussion of the situation for
the opposite parabolic in which $m_1 \leq \cdots \leq m_l$.
This is particularly interesting as it explains the role of tilting modules
and Ringel duality in the standard picture,
leading to what can be interpreted as 
the degenerate analogue of the results of Mathas from \cite{M}.

The rest of the article is organized as follows.
\begin{itemize}
\item
In section 2 we 
give a detailed account of the 
construction 
of the standard monomial, canonical and dual-canonical bases 
of $V(\La)$ and $F(\La)$. We also introduce a fourth basis for $F(\La)$
which we call the
{\em quasi-canonical basis}, which has similar properties to the canonical 
basis but is not invariant under the bar involution.
In the categorification picture the quasi-canonical basis 
corresponds to
indecomposable projectives, whereas the canonical basis corresponds to
tilting modules.
\item
In section 3 we derive the main categorification theorem (Theorem~\ref{cat2}) 
which relates
the three bases for $V(\La)$ just mentioned
to Specht modules, projective indecomposable
modules and
irreducible modules, respectively.
The proof of this is a straightforward application of 
Schur-Weyl duality for higher levels,
the starting point being the analogous (known)
categorification theorem relating $F(\La)$
to parabolic category $\mathcal O$, which is a consequence of the
Kazhdan-Lusztig conjecture.
\item
In section 4 we introduce Young modules and signed Young modules,
and discuss their relationship with tilting modules and Ringel duality.
\end{itemize}

\section{Combinatorics of canonical bases}

In this section, we
set up some basic notation
related to the general linear Lie algebra and its quantized
enveloping algebra $U$, in both finite and infinite rank.
Then we fix a dominant weight $\La$ and
recall a standard construction of
the irreducible $U$-module 
$V(\La)$ of highest weight $\La$, 
the main point being to explain in detail how various
natural bases fit in with this construction.

\subsection{Some combinatorics}\label{sscomb}
Let $I$ be a non-empty set of consecutive integers
and set $I_+ := I \cup (I+1)$.
Let $P := \bigoplus_{i \in I_+} \Z\La_i$ be the weight lattice associated to
the general linear Lie algebra $\mathfrak{gl}_{I_+}(\C)$
of $I_+ \times I_+$ matrices; we refer to $\La_i$ here as the
{\em $i$th fundamental weight}.
If $I$ is bounded-below then $P$ can also be written as
$P = \bigoplus_{i \in I_+} \Z \delta_i$ where the weights $\delta_i$ and $\La_j$
are related by the formula
$$
\La_j = \sum_{j \geq i \in  I_+} \delta_i.
$$
We give meaning to this formula when $I$ is not bounded-below by
embedding $P$ into a larger space $\widehat{P}$ consisting
of all formal $\Z$-linear combinations $\sum_{i \in I_+} a_i \delta_i$ such
that $a_i = 0$ for all sufficiently large $i$.
Let $Q \subset P$ be the root lattice generated by the simple
roots $\{\alpha_i\:|\:i \in I\}$ which are defined from
$$
\alpha_i = \delta_i - \delta_{i+1}.
$$
There is a canonical pairing
$(.,.):P \times Q \rightarrow \Z$
with $(\La_i, \alpha_j) = \delta_{i,j}$ for $i \in I_+, j \in I$.
If $I$ is bounded-below this pairing is the restriction of the
symmetric bilinear form on $P$ with respect to which the
$\delta_i$'s form an orthonormal basis.

Set $Q_+ := \sum_{i \in I} \Z_{\geq 0} \alpha_i$ and $P_+ := \sum_{i \in I_+} \Z_{\geq 0} \La_i$.
Let $\height(\alpha)$ denote the usual {\em height}
of $\alpha \in Q_+$, i.e. the sum of its coefficients 
when expressed as a linear combination of simple roots.
We will often need to work with $d$-tuples $\bi = (i_1,\dots,i_d) \in I^d$.
The symmetric group $S_d$ acts naturally on $I^d$ by place
permutation, and the orbits of $S_d$ on $I^d$ are the sets
$$
I^\alpha := \left\{\bi \in I^d\:|\:\alpha_{i_1}+\cdots+\alpha_{i_d} = \alpha\right\}
$$
parametrized by all $\alpha \in Q_+$ of height $d$.

Suppose we are given a dominant weight $\La \in P_+$.
We can write it uniquely as
\begin{equation}\label{ms}
\La = \La_{m_1} + \cdots + \La_{m_l}
\end{equation}
for some $l \geq 0$ and 
$m_1 \geq \cdots \geq m_l$.
We refer to $l$ here as the {\em level}.
We often identify $\La$ with its {\em diagram},
namely, the array of boxes with rows indexed by $I_+$ 
in increasing order from bottom to top,
columns indexed by $1,\dots,l$ from left to right, and a box
in row $i$ and column $j$ whenever $i \leq m_j$. This definition
makes sense even if the index set is not bounded-below, but in that case
the diagram goes down forever.
For example, taking $I = \Z$,
the diagram 
of $\La = \La_2+\La_2 + \La_1 + \La_{-1}$ is
$$
\begin{picture}(0,80)
\put(-10,44)
{
\diagram{
$ 2$&$ 2$\cr
$ 1$&$ 1$&$ 1$\cr
$ 0$&$ 0$&$ 0$\cr
$ \text{\!\! -1}$&$ \text{\!\! -1}$&$ \text{\!\! -1}$&$ \text{\!\! -1}$\cr
$ \text{\!\! -2}$&$ \text{\!\! -2}$&$ \text{\!\! -2}$&$ \text{\!\! -2}$\cr
}}
\put(16,1){$\vdots$}
\end{picture}
$$
Here we have labelled all boxes on
the $i$th row by $i$.

A {\em $\La$-tableau} is a diagram $A$ obtained by writing integers into the boxes of the diagram $\La$.
If $A$ is any $\La$-tableau, we let $A(i,j)$ denote the entry
in the $i$th row and $j$th column of $A$.
We say $A$ is {\em column-strict} if its
entries belong to $I_+$, they are strictly increasing from bottom to top in each column, and moreover
all entries in the $i$th row are equal to $i$
for all but finitely many rows (the final condition being vacuous if $I$ 
is bounded-below). 
We say $A$ is {\em standard} if it is column-strict and in addition its
entries are weakly increasing from left to right in each row.
Let $\Col^\La$ (resp.\ $\Std^\La$) denote the set
of all column-strict (resp.\ standard) $\La$-tableaux.

For any column-strict $\La$-tableau $A$ and $1 \leq j \leq l$, 
define
\begin{align}
\wt_j(A) &:=
\sum_{m_j \geq i \in I_+} \delta_{A(i,j)} 
= \La_{m_j} - \sum_{m_j \geq i \in I} 
(\alpha_{i}+\alpha_{i+1}+\cdots+\alpha_{A(i,j)-1}),\\
\wt(A) &:= 
\wt_1(A) + \cdots + \wt_l(A),\label{wtdef}
\end{align} 
the {\em weight of the $j$th column} of $A$, and the {\em weight}
of $A$, respectively.
For any $\alpha \in Q_+$, let $\Col^\La_\alpha$
(resp.\ $\Std^\La_\alpha$) denote the set of all
column-strict (resp.\ standard) $\La$-tableaux of weight
$\La - \alpha$.
There is a unique column-strict $\La$-tableau of weight $\La$,
namely, the {\em ground-state tableau $A^\La$} which has all entries
in its $i$th row equal to $i$ for all $i \in I_+$.

We also need the 
{\em Bruhat order} on $\Col^\La$.
This is defined by
$A \leq B$ if $\wt(A) = \wt(B)$
and
$$
\wt_1(A)+\cdots+\wt_j(A) \geq \wt_1(B)+\cdots+\wt_j(B)
$$
in the dominance ordering on $P$
for each $j=1,\dots,l-1$.
The Bruhat order 
has the basic property that $A < B$ if $B$ is obtained from $A$ by swapping
entries $a < b$ in columns $i < j$.

\subsection{\boldmath The standard monomial basis}\label{ssmb}
Let $U$ be the generic quantized enveloping algebra associated to 
$\mathfrak{gl}_{I_+}(\C)$.
Thus $U$ is the associative algebra over the field of rational functions $\Q(q)$ in an indeterminate $q$, with generators 
$$
\{D_i, D_i^{-1}\:\big|\:i \in I_+\}\cup\left\{E_i, F_i\:|\:i \in I\right\}
$$
subject to the following well known relations:
\begin{align*}
D_iD_i^{-1} &= D_i^{-1}D_i = 1,&E_iE_j &= E_jE_i&\hbox{if $|i-j|>1$},\\
D_iD_j &= D_jD_i,&\!\!\!\!E_i^2E_j  + E_jE_i^2 &= (q+q^{-1})E_iE_jE_i&\hbox{if $|i-j|=1$},\\
D_iE_jD_i^{-1} &= q^{(\delta_i, \alpha_j)} E_j,&F_iF_j &= F_jF_i&\hbox{if $|i-j| > 1$,}\\
D_iF_jD_i^{-1} &= q^{-(\delta_i, \alpha_j)} F_j,&\!\!\!\!F_i^2F_j+ F_jF_i^2 &=   (q+q^{-1})F_iF_jF_i&\hbox{if $|i-j|=1$,}
\end{align*}
\begin{equation*}
\quad 
E_iF_j - F_jE_i = \delta_{i, j} \frac{D_i D_{i+1}^{-1} - D_{i+1} D_i^{-1}}{q - q^{-1}}.
\end{equation*}
We view $U$ as a Hopf algebra with comultiplication
$\Delta$ defined on generators by
\begin{align*}
\Delta(D_i^{\pm 1}) &= D_i^{\pm 1} \otimes D_i^{\pm 1},\\
\Delta(E_i) &= 1 \otimes E_i + E_i \otimes D_i D_{i+1}^{-1},\\
\Delta(F_i) &= F_i \otimes 1 + D_i^{-1}D_{i+1}  \otimes F_i.
\end{align*}

For $\La \in P_+$, let
$V(\La)$ denote the irreducible $U$-module 
of highest weight $\La$, that is, the (unique up to isomorphism)
irreducible $U$-module generated by a vector $v_\La$
such that $E_i v_\La = 0$ for each $i \in I$
and $D_i v_\La = q^{(\La,\delta_i)} v_\La$
for each $i \in I_+$.
We are going to recall a well known direct construction of
$V(\La)$, beginning with the easiest case when $\La$
is a fundamental weight.

Let $V$ denote the natural $U$-module
with basis
$\{v_i\:|\:i \in I_+\}$. The generators act on this basis by
the rules
$$
D_i v_j = q^{\delta_{i,j}} v_j,
\qquad
E_i v_j = \delta_{i+1, j} v_i,
\qquad
F_i v_j = \delta_{i,j} v_{i+1}.
$$
Following \cite[$\S$5]{dual} (noting the roles of $q$
and $q^{-1}$ are switched there),
we define the $n$th {\em quantum exterior power} $\bigwedge^n V$
to be the $U$-submodule of $\bigotimes^n V$
spanned
by the vectors
\begin{align}\label{bvs}
v_{i_1} \wedge\cdots \wedge v_{i_n} &:=
\sum_{w \in S_n} (-q)^{\ell(w)} v_{i_{w(1)}} \otimes\cdots\otimes v_{i_{w(n)}}
\end{align}
for all $i_1 > \cdots > i_n$ from the index set $I_+$.
Here, $\ell(w)$ denotes the usual length of a permutation
$w\in S_n$.

If $I$ is bounded-below we have simply that
\begin{equation}\label{siw}
V(\La_m) = {\textstyle \bigwedge^{m +1- \inf(I)}} V
\end{equation}
for each $m \in I_+$.
The same thing is true if $\inf(I)=-\infty$ providing the right
hand side of (\ref{siw}) is interpreted as the {\em semi-infinite wedge}
$\bigwedge^{m+\infty} V$, that is, the $U$-module with
basis consisting of all expressions of the form
$$
v_{i_1} \wedge v_{i_2} \wedge\cdots
$$
for $I_+ \ni i_1 > i_2 > \cdots$
such that $i_n = m+1-n$ for $n \gg 0$.

More formally, $\bigwedge^{m+\infty} V$ is a direct limit of 
finite exterior powers. 
To write this down precisely, we need to let the index set $I$ vary:
for any $k \in I$ let $I_{\geq k} := \{i \in I\:|\:i \geq k\}$
and write $U_{\geq k}$ and $V_{\geq k}$
for the analogues of $U$ and $V$ defined with respect to the
truncated index set $I_{\geq k}$. Obviously $U$ and $V$
are the unions of the corresponding truncated objects taken over all $k \in I$.
Moreover for $k \leq m$ with $k-1\in I$ there is a natural embedding of 
$U_{\geq k}$-modules
\begin{equation}\label{iota}
\iota_{k}:{\textstyle\bigwedge^{m+1-k}} V_{\geq k}
\hookrightarrow
{\textstyle\bigwedge^{m+2-k}} V_{\geq (k-1)}
\end{equation}
sending 
$v_{i_1} \wedge \cdots \wedge v_{i_{m+1-k}}$
to 
$v_{i_1} \wedge \cdots \wedge v_{i_{m+1-k}} \wedge v_{k-1}$.
When $I$ is not bounded-below, the semi-infinite wedge 
$\bigwedge^{m+\infty} V$
is the direct limit of 
the finite exterior powers $\bigwedge^{m+1-k} V_{\geq k}$
over all $k \leq m$, taken with respect to the embeddings $\iota_{k}$.
It is a $U_{\geq k}$-module for each $k$, hence actually a $U$-module.

Now suppose that $\La\in P_+$
is a dominant weight of arbitrary level and write it in the standard form
(\ref{ms}). 
Set
\begin{equation}\label{bw}
F(\La)
:= 
V(\La_{m_1}) \otimes\cdots\otimes V(\La_{m_l}),
\end{equation}
a tensor product of $l$ fundamental representations.
This $U$-module has an obvious monomial basis parametrized by the
set $\Col^\La$ of column-strict $\La$-tableaux.
More precisely, given $A \in \Col^\La$, we set
\begin{equation*}
M_A := 
(v_{A(m_1,1)} \wedge v_{A(m_1-1,1)} \wedge \cdots)
\otimes\cdots\otimes
(v_{A(m_l,l)} \wedge v_{A(m_l-1,l)} \wedge \cdots),
\end{equation*}
which is a tensor product of finite or semi-infinite wedges 
according to whether $I$ is bounded-below or not;
informally, $M_A$ is the monomial in $F(\La)$ obtained by reading
the entries of $A$ down columns starting from the leftmost column.
Then the {\em monomial basis} of $F(\La)$ is the set
\begin{equation}
\left\{M_A\:\big|\:A \in \Col^\La\right\}.
\end{equation}
Each vector $M_A$ in this basis is of weight $\wt(A)$ as in (\ref{wtdef}),
justifying that notation.

Since the $\La$-weight space of $F(\La)$ is one dimensional
and all other weights are strictly smaller in the dominance order,
$V(\La)$ appears as a constituent of the integrable module
$F(\La)$ with multiplicity one.
More precisely, the vector $v_\La := M_{A^\La}$ is a canonical highest weight vector
in $F(\La)$ of weight $\La$ and we can {\em define}
$V(\La)$ to be the $U$-submodule of $F(\La)$
generated by this vector. 
In view of complete reducibility,
there is also a canonical $U$-equivariant projection
\begin{equation}\label{pie}
\pi:F(\La) \twoheadrightarrow V(\La).
\end{equation}
For each $A \in \Col^\La$, let 
$S_A := \pi(M_A) \in V(\La)$. These are the
{\em standard monomials} in $V(\La)$, and the 
{\em standard monomial basis theorem} asserts that
vectors
\begin{equation}\label{smb}
\left\{S_A\:\big|\:A \in \Std^\La\right\}
\end{equation}
give a basis for $V(\La)$. This is the quantum analogue
of the classical standard monomial basis for the space of
global sections of a line bundle on the flag variety, which goes back at least to
Hodge.
If $I$ is bounded-below the standard monomial 
basis theorem is proved (by no means for the first time!) in
\cite[Theorem 26]{dual}.
The standard monomial basis theorem when $I$ is not bounded-below follows 
easily 
from the bounded-below case by taking direct limits, as we explain in the 
next paragraph.

Suppose then that $I$ is not bounded-below and take any $k \leq m_l$.
Write $F(\La)_{\geq k}$ and 
$V(\La)_{\geq k}$ for the analogues of the modules 
$F(\La)$ and
$V(\La)$ over the truncated
algebra 
$U_{\geq k}$, i.e. working with the bounded-below index set $I_{\geq k}$. 
One checks easily that there is a commutative diagram
\begin{equation}\label{uiota}
\begin{CD}
F(\La)_{\geq k} &@>\pi>>& V(\La)_{\geq k}\\
@V\iota_k VV&&@VV\iota_k V\\
F(\La)_{\geq (k-1)} &@>>\pi>& V(\La)_{\geq (k-1)}
\end{CD}
\end{equation}
of $U_{\geq k}$-module homomorphisms.
Here, the left hand vertical map is the tensor product of
$l$ maps of the form (\ref{iota}), and we have that
\begin{equation}\label{mcup}
\iota_k(M_A) = M_{A\sqcup(k-1)^l},
\end{equation}
where $A \sqcup (k-1)^l$
denotes the tableau obtained
 obtained by adding an extra row of
$l$ boxes, each labelled by $(k-1)$, to the bottom of $A$.
The right hand vertical map is defined as the unique $U_{\geq k}$-module homomorphism
mapping $v_\La \in V(\La)_{\geq k}$ to $v_\La \in V(\La)_{\geq (k-1)}$.
The commutativity of the diagram implies that the right hand
$\iota_k$ also has the property that
\begin{equation}\label{smcup}
\iota_k(S_A) = S_{A \sqcup (k-1)^l}.
\end{equation}
Moreover $F(\La)$ and $V(\La)$ are the direct limits of
their truncated versions over all $k \leq m_l$,
again taken with respect to the embeddings $\iota_k$.
In view of (\ref{mcup}), the monomial basis for
$F(\La)$ is the union of the monomial bases from
all bounded-below cases. Similarly by (\ref{smcup}),
the standard monomial basis for $V(\La)$ 
is the union of the standard monomial bases from all 
$V(\La)_{\geq k}$. This proves the standard monomial basis theorem
for $V(\La)$ in the case that $I$ is not bounded-below.

\subsection{The dual-canonical basis}\label{ssdcb}
In this subsection we are going to recall 
a slightly unorthodox definition of 
Lusztig's dual-canonical 
basis of $V(\La)$ (which is the upper global crystal base of Kashiwara)
following \cite[$\S$7]{dual}.
To get started, we need Lusztig's bar involution on $F(\La)$.
The {\em bar involution} on $U$ is the
automorphism $-:U \rightarrow U$ that is
anti-linear with respect to the field automorphism
$\Q(q) \rightarrow \Q(q), f(q) \mapsto f(q^{-1})$
and satisfies
\begin{equation*}
\overline{E_i} = E_i,\qquad
\overline{F_i} = F_i,\qquad
\overline{D_i} = D_i^{-1}.
\end{equation*}
By a {\em compatible bar involution} on a $U$-module $M$
we mean an anti-linear involution
$-:M \rightarrow M$
such that $\overline{uv} = \overline{u}\,\overline{v}$
for each $u \in U, v \in M$.
The fundamental module $V(\La_m)$ possesses a compatible bar involution
which fixes each of its basis vectors
of the form $v_{i_1} \wedge v_{i_2} \wedge \cdots$
for $i_1 > i_2 > \cdots$.
From this and a general construction due to Lusztig
involving the quasi-$R$-matrix \cite[$\S$27.3]{Lubook}, we get 
a compatible bar involution on the tensor product
$F(\La)$ from (\ref{bw}).
It has the crucial property that
\begin{equation*}
\overline{M_A} = M_A + \text{(a $\Z[q,q^{-1}]$-linear combination of 
$M_B$'s for $B < A$)}
\end{equation*} 
where $<$ is the Bruhat order on column-strict tableaux.
This is explained in more detail in 
\cite[$\S$5]{dual} in the case when $I$ is bounded-below.
In view of the following lemma, the bar involution on
$F(\La)$ when $I$ is not bounded-below is the 
limit of the bar involutions on each of the truncations
$F(\La)_{\geq k}$.

\begin{Lemma}\label{stable}
Suppose that $I$ is not bounded-below and take $k \leq m_l$.
The bar involution commutes with the natural embedding 
$\iota_k:F(\La)_{\geq k}
\hookrightarrow F(\La)_{\geq (k-1)}$.
\end{Lemma}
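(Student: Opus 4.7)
Following Lusztig's construction, the bar involution on $F(\La)_{\geq j}$ (for $j = k$ or $k-1$) is obtained by composing the tensor product of the bar involutions on the individual fundamental factors $V(\La_{m_i})_{\geq j}$ with (the appropriate iterate of) Lusztig's quasi-$R$-matrix $\Theta^{(j)}$ for $U_{\geq j}$, the latter acting on the $l$-fold tensor product. The plan is to verify separately that $\iota_k$ commutes with the individual bar involutions on the fundamental factors, and that $\Theta^{(k-1)}$ acts on the image of $\iota_k^{\otimes l}$ exactly as $\Theta^{(k)}$ does on $F(\La)_{\geq k}$.

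The first assertion is immediate on basis vectors: the bar involution on $V(\La_m)_{\geq j}$ fixes each wedge $v_{i_1} \wedge v_{i_2} \wedge \cdots$, and $\iota_k$ sends such a vector to $v_{i_1} \wedge v_{i_2} \wedge \cdots \wedge v_{k-1}$, which is again a bar-fixed basis vector of $V(\La_m)_{\geq (k-1)}$. For the second, decompose $\Theta^{(j)} = \sum_\nu \Theta^{(j)}_\nu$, where $\nu$ runs over the sub-semigroup of $Q_+$ generated by those $\alpha_i$ with $i \geq j$, and each $\Theta^{(j)}_\nu$ is built from factors in $U^-_{\geq j,-\nu}$ and $U^+_{\geq j,\nu}$. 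The terms whose $\nu$ only involves $\alpha_i$ for $i \geq k$ coincide in the two cases, and by $U_{\geq k}$-equivariance of $\iota_k^{\otimes l}$ they contribute identically through $\iota_k$. So it suffices to show that the extra terms $\Theta^{(k-1)}_\nu$, those with $\nu$ genuinely involving $\alpha_{k-1}$, annihilate the image of $\iota_k^{\otimes l}$.

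The crux is the elementary observation that $E_{k-1}$ annihilates $\iota_k(V(\La_m)_{\geq k})$: on a basis vector $v_{i_1} \wedge \cdots \wedge v_{i_n} \wedge v_{k-1}$ (with $i_1 > \cdots > i_n \geq k$), $E_{k-1}$ can only act by converting some $v_k$ into $v_{k-1}$, and this duplicates the bottom factor and gives zero. Via the coproduct, $E_{k-1}$ therefore annihilates $\iota_k^{\otimes l}(F(\La)_{\geq k})$, which is moreover a $U_{\geq k}$-submodule of $F(\La)_{\geq (k-1)}$ by $U_{\geq k}$-equivariance of $\iota_k^{\otimes l}$. Consequently any PBW monomial $E_{j_1} \cdots E_{j_p}$ (with all $j_i \geq k-1$) that contains an $E_{k-1}$ kills the image: reading operators from right to left, the generators with index $\geq k$ preserve the submodule, while the first $E_{k-1}$ encountered then produces zero. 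Hence every element of $U^+_{\geq (k-1),\nu}$ with $\nu$ involving $\alpha_{k-1}$ annihilates the image, and the extra terms of $\Theta^{(k-1)}$ vanish there. The main obstacle is thus the comparison of the two quasi-$R$-matrices, which reduces cleanly to this combinatorial observation about the appended $v_{k-1}$ blocking any $E_{k-1}$-action.
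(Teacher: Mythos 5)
Your proof is correct and follows the same route as the paper's: both compare the quasi-$R$-matrices for $U_{\geq k}$ and $U_{\geq(k-1)}$, observe that they agree on weights supported on $\{\alpha_i : i \geq k\}$ (where $\iota_k^{\otimes l}$ is equivariant), and show the extra terms annihilate the image. The paper dispatches the extra terms with a one-line appeal to "weight considerations" — i.e. that any $\nu$ involving $\alpha_{k-1}$ would push the $\delta_{k-1}$-coefficient of some tensor factor above its maximum value $1$ — and your explicit observation that $E_{k-1}$ kills each $\iota_k(V(\La_m)_{\geq k})$ because it would duplicate the appended $v_{k-1}$ is exactly the concrete mechanism behind that weight argument, so the two are the same idea at different levels of detail.
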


\begin{proof}
This follows from the definition of the bar involution and the fact that
the quasi-$R$-matrix attached to
$U_{\geq (k-1)}$ is equal to the quasi-$R$-matrix attached to
$U_{\geq k}$ plus a sum of terms which annihilate 
vectors in $\iota_k(F(\La)_{\geq k})$ by weight considerations.
\end{proof}

Applying 
Lusztig's lemma \cite[Lemma 24.2.1]{Lubook},
we can now define the {\em dual-canonical basis}
\begin{equation}
\left\{L_A\:\big|\:A \in \Col^\La\right\}
\end{equation}
of $F(\La)$ by declaring that $L_A$ is the unique bar-invariant vector 
such that
$$
L_A = M_A + \text{(a $q \Z[q]$-linear combination of
$M_B$'s for various $B<A$)}.
$$
Using Lemma~\ref{stable}, (\ref{mcup})
and the definition, it is easy to check 
for any $k \leq m_l$ with $k-1\in I$
that
the embedding
 $\iota_k:F(\La)_{\geq k}
\hookrightarrow F(\La)_{\geq (k-1)}$
satisfies
\begin{equation}\label{lstab}
\iota_k(L_A) = L_{A \sqcup (k-1)^l}.
\end{equation}
This means that, like the monomial basis, 
the dual-canonical basis for $F(\La)$
in the case that $I$ is not bounded-below
is the union of the dual-canonical bases from all bounded-below cases.

The polynomials $d_{A,B}(q), p_{A,B}(q) \in \Z[q]$ defined from
\begin{align}\label{t}
M_B &= \sum_{A \in \Col^\La} d_{A,B}(q) L_A,\\
L_B &= \sum_{A \in \Col^\La} p_{A,B}(-q) M_A\label{e2}
\end{align}
satisfy $p_{A,A}(q) = d_{A,A}(q) = 1$ and $p_{A,B}(q) = d_{A,B}(q) = 0$
unless $A \leq B$.
In \cite[Remark 14]{dual} one finds explicit formulae expressing these polynomials in terms of finite
type A Kazhdan-Lusztig polynomials. Let us record the appropriate formla for the $p_{A,B}(q)$'s, which
are just Deodhar's parabolic Kazhdan-Lusztig polynomials for the symmetric group.
It suffices in view of (\ref{mcup}) and (\ref{lstab}) 
to do this in the case that $I$ is bounded-below.
Consider the natural right action of
$S_{n}$ on $I_+^n$ by place permutation.
For $A \in \Col^\La$, let 
$(a_1,\dots,a_n)$ be 
{\em column-reading} of $A$, that is, 
the tuple obtained by reading the entries
of $A$ down columns starting with the leftmost column.
Define $w_A$ to be the unique element of
$S_{n}$ of minimal length such that
$(a_1,\dots,a_n)w_A$ is a weakly increasing sequence.
Also let $Z_A$ be the stabilizer in $S_{n}$ of this
weakly increasing sequence.
Then for any $A \leq B$ we have that
\begin{equation}\label{scf}
p_{A,B}(q) = q^{\ell(w_B) - \ell(w_A)}
\sum_{z \in Z_A} (-1)^{\ell(z)} P_{w_A z, w_B}(q^{-2}),
\end{equation}
where $P_{x,y}(q)$ is the usual Kazhdan-Lusztig polynomial
exactly as in \cite{KL1}.
The formula (\ref{scf}) should be compared with 
the second formula from \cite[Theorem 3.11.4(iv)]{BGS}.

Now we pass from $F(\La)$ to $V(\La)$.
The bar involution on $F(\La)$ restricts to give a 
well-defined compatible bar involution on $V(\La)$.
It is the unique compatible bar involution on $V(\La)$
fixing the highest weight vector $v_\La$.
For $A\in \Std^\La$ define
$D_A := \pi(L_A) \in V(\La)$, so that
\begin{equation}
D_B = \sum_{A \in \Col^\La} p_{A,B}(-q) S_A.
\end{equation}
In view of the following theorem, the vectors
\begin{equation}
\left\{D_A\:\big|\:A \in \Std^\La\right\}
\end{equation}
constitute the {\em dual-canonical basis} of $V(\La)$.
Moreover we have that
\begin{equation}\label{sind}
S_B = \sum_{A \in \Std^\La} d_{A,B}(q) D_A
\end{equation}
for $B \in \Col^\La$.
Finally by (\ref{lstab}) and the commutativity of the diagram (\ref{uiota}),
we again get that
\begin{equation}\label{dstab}
\iota_k(D_A) = D_{A \sqcup (k-1)^l},
\end{equation}
i.e. the dual-canonical basis for $V(\La)$ when $I$ is not bounded-below
is the union of the dual-canonical bases from all the bounded-below cases.

\begin{Theorem}\label{las}
Each $D_A$ is bar-invariant
and the vectors $\{D_A\:|\:A \in \Std^\La\}$ form a basis for
$V(\La)$ which coincides with Lusztig's dual-canonical basis.
Moreover, 
$\pi(L_A) = 0$
if  $A$ is not standard.
\end{Theorem}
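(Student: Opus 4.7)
The plan is to prove the theorem in three steps.

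First, I would show that the projection $\pi\colon F(\La)\twoheadrightarrow V(\La)$ commutes with the bar involution, which immediately yields $\overline{D_A}=\overline{\pi(L_A)}=\pi(\overline{L_A})=\pi(L_A)=D_A$. Its kernel $K=\ker\pi$ is the $U$-complement of $V(\La)$ in $F(\La)$, i.e.\ the sum of isotypic components for $V(\mu)$ with $\mu\ne\La$. Since $\overline{D_i}=D_i^{-1}$, the bar involution preserves weight spaces of $F(\La)$; since $\overline{E_i}=E_i$, it sends highest weight vectors to highest weight vectors. Hence $K$, being generated under $U$ by the highest weight vectors of weight strictly below $\La$, is bar-invariant, and bar descends along $\pi$.

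Next, I would apply Lusztig's uniqueness lemma inside $V(\La)$. The bar involution on $V(\La)$ acts in a Bruhat-triangular fashion on the standard monomial basis $\{S_B:B\in\Std^\La\}$: namely, $\overline{S_B}\equiv S_B\pmod{\sum_{C<B,\,C\in\Std^\La}\Z[q,q^{-1}]S_C}$. This follows by pushing the Bruhat-triangular bar action on the monomial basis of $F(\La)$ through $\pi$ and invoking a straightening formula $S_A=\sum_{C\in\Std^\La,\,C<A}c_{A,C}(q)\,S_C$ to rewrite each non-standard $S_A$ in terms of standards. Lusztig's lemma then produces a unique bar-invariant basis $\{D_B^{\mathrm{L}}:B\in\Std^\La\}$ of $V(\La)$ with $D_B^{\mathrm{L}}\equiv S_B\pmod{\sum_{C<B,\,C\in\Std^\La}q\Z[q]\,S_C}$; by construction this is Lusztig's dual-canonical basis.

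Finally, to match $\pi(L_B)$ with $D_B^{\mathrm{L}}$ (or zero), I would expand $\pi(L_B)=\sum_{A\leq B}p_{A,B}(-q)\,S_A$ via (\ref{e2}) and straighten each non-standard $S_A$. Using $p_{A,B}(-q)\in q\Z[q]$ for $A<B$ together with the integrality of the straightening, for standard $B$ the result lies in $S_B+\sum_{C<B,\,C\in\Std^\La}q\Z[q]\,S_C$, so Lusztig's uniqueness yields $\pi(L_B)=D_B^{\mathrm{L}}$. For non-standard $B$, the same computation places $\pi(L_B)$ inside $\sum_{C\in\Std^\La}q\Z[q]\,S_C$ with no distinguished leading term, and combined with bar-invariance the standard consequence of Lusztig's lemma (a bar-invariant vector in $q$ times the integral lattice vanishes) forces $\pi(L_B)=0$. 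The principal technical input — and main obstacle — is establishing the Bruhat-triangular straightening for non-standard $A$ with coefficients whose $q$-behavior is compatible with Lusztig's lemma, specifically ensuring that iterated straightening remains in the $q$-part of the integral lattice. This is a quantum Pl\"ucker-type relation essentially built into the proof of the standard monomial basis theorem \cite[Theorem 26]{dual}; extracting it in the required form is where the real work lies.
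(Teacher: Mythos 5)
The paper's own proof of this theorem is a two-line citation: reduce to the bounded-below case, then invoke \cite[Theorem 26]{dual} (with \cite[Remark 27]{dual} for the identification with Lusztig's dual-canonical basis). Your proposal is a reasonable reconstruction of what that cited theorem must establish — the bar involution descends along $\pi$, Lusztig's lemma is applied in $V(\La)$ with the standard monomial basis, and the $q$-behavior of the straightening is the crux — so in substance you are following the same route, not a different one.

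Two points deserve attention, though. First, you omit the reduction to the case that $I$ is bounded-below; that reduction is not cosmetic here, since both the standard monomial basis theorem and the reference you lean on operate in the bounded-below setting, and the paper handles general $I$ via the direct-limit compatibilities (\ref{lstab}), (\ref{dstab}) established just before the theorem. Second, and more seriously, be wary of circularity in the straightening step you flag as the main obstacle: the cleanest way to see that $S_B$ for non-standard $B$ expands as a $q\Z[q]$-combination of lower standard $S_C$'s is to use Corollary~\ref{kerb} (namely $L_B\in\ker\pi$ for non-standard $B$, giving $S_B=-\sum_{A<B}p_{A,B}(-q)\,S_A$ and then iterating), but Corollary~\ref{kerb} is stated as a \emph{consequence} of the very theorem you are proving. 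An independent proof of the straightening with the required triangularity and $q$-integrality — which is indeed what is done in \cite{dual}, by a careful induction on the Bruhat order — is precisely the content you would need to supply; as written, your sketch identifies the gap honestly but does not close it.
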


\begin{proof}
It suffices to prove the theorem in the case that $I$ is bounded-below.
In that case, everything follows from \cite[Theorem 26]{dual}.
The fact that the $D_A$'s coincide with the usual dual-canonical basis in the
sense of Lusztig is explained in \cite[Remark 27]{dual}.
\end{proof}

\begin{Corollary}\label{kerb}
The vectors $\{L_A\:|\:A \in \Col^\La \setminus \Std^\La\}$
give a basis for $\ker \pi$.
\end{Corollary}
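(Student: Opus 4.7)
The plan is to derive the corollary directly from Theorem~\ref{las}, which already supplies both of the nontrivial ingredients: that $\pi(L_A)=0$ for $A$ non-standard, and that $\{D_A = \pi(L_A) : A \in \Std^\La\}$ is a basis of $V(\La)$.

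First I would observe that the vectors $\{L_A : A \in \Col^\La \setminus \Std^\La\}$ are linearly independent, since they form a subset of the dual-canonical basis of $F(\La)$, and that they lie in $\ker\pi$ by the last sentence of Theorem~\ref{las}. It remains only to show that they span $\ker\pi$.

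For this, take an arbitrary $v \in \ker\pi$ and expand it in the dual-canonical basis of $F(\La)$ as
$$
v = \sum_{A \in \Col^\La} c_A\, L_A
$$
with $c_A \in \Q(q)$. Applying $\pi$, using $\pi(L_A) = D_A$ for $A \in \Std^\La$ and $\pi(L_A) = 0$ otherwise, yields
$$
0 = \pi(v) = \sum_{A \in \Std^\La} c_A\, D_A.
$$
Since $\{D_A : A \in \Std^\La\}$ is a basis of $V(\La)$ by Theorem~\ref{las}, each coefficient $c_A$ with $A$ standard must vanish. Hence $v = \sum_{A \in \Col^\La \setminus \Std^\La} c_A\, L_A$, completing the spanning argument.

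There is no real obstacle here: the corollary is essentially a linear-algebraic repackaging of Theorem~\ref{las}. The only thing to be mildly careful about is that one uses the dual-canonical basis of $F(\La)$ (rather than the monomial basis) to do the expansion, precisely because it interacts cleanly with $\pi$ via the vanishing statement in Theorem~\ref{las}.
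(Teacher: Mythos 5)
Your proof is correct and is precisely the linear-algebra argument the paper leaves implicit: the corollary is stated without proof immediately after Theorem~\ref{las}, relying on the reader to expand an arbitrary element of $\ker\pi$ in the dual-canonical basis and apply the two facts that $\pi(L_A)=D_A$ for standard $A$ and $\pi(L_A)=0$ otherwise. Nothing to add.
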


\subsection{The crystal graph} \label{sscg}
The actions of the Chevalley generators
$E_i$ and $F_i$ on the dual-canonical bases 
of $F(\La)$ and $V(\La)$ 
are reflected by an underlying {\em crystal graph}. 
We wish to briefly recall the definition of this
important combinatorial object.

Suppose we are given a column-strict tableau $A \in \Col^\La$
and $i \in I$.
Enumerate the boxes of $A$
that contain the entries $i$ or $(i+1)$
as $b_1,\dots,b_n$ in column-reading order, i.e.
working in order down columns starting with the
leftmost column. Thus if $r < s$ then $b_r$ is either located in a column
strictly
to the left of $b_s$, or $b_r$ is in the same column
as but strictly above $b_s$.
Then we define the
{\em reduced $i$-signature} $(\sigma_1,\dots,\sigma_n)$ 
of $A$ by applying the following algorithm.
Start with the sequence $(\sigma_1,\dots,\sigma_n)$ in which
$\sigma_r=+$ if $b_r$ contains the entry $i$
and $\sigma_r=-$ if $b_r$ contains the entry $(i+1)$.
If we can find $1 \leq r < s \leq n$
such that $\sigma_r = -$, $\sigma_s = +$
and $\sigma_{r+1} =\cdots=\sigma_{s-1}=0$
then we replace $\sigma_r$ and $\sigma_s$ by $0$.
Keep doing this until we are left with a sequence
$(\sigma_1,\dots,\sigma_n)$
in which no $-$ appears to the left of a $+$.
This is the {reduced $i$-signature} of $A$.
Then define
\begin{align}
\eps_i(A) &:= \#\{r=1,\dots,n\:|\:\sigma_r=-\},\label{epsi}\\
\phi_i(A) &:= \#\{r=1,\dots,n\:|\:\sigma_r=+\}.
\end{align}
If $\eps_i(A) = 0$ then we set
$\tilde e_i(A) := \frownie$; otherwise we define
$\tilde e_i(A)$ to be the column-strict tableau obtained
by replacing the entry $(i+1)$ in box $b_r$ by $i$,
where $r$ indexes the leftmost $-$ in the reduced $i$-signature.
Similarly, if $\phi_i(A) = 0$ then we set
$\tilde f_i(A) := \frownie$; otherwise we define
$\tilde f_i(A)$ to be the column-strict tableau obtained
by replacing the $i$ in box $b_r$ by $(i+1)$,
where $r$ indexes the rightmost $+$ in the reduced $i$-signature.
This defines the crystal operators
\begin{equation}\label{te}
\tilde e_i, \tilde f_i: \Col^\La \rightarrow \Col^\La \sqcup \{\frownie\}.
\end{equation}
Moreover it is obviously the case that
\begin{equation}
(\wt(A), \alpha_i) = \phi_i(A) - \eps_i(A).
\end{equation}
The datum $(\Col^\La, \tilde e_i, \tilde f_i, 
\eps_i, \phi_i, \wt)$ just defined
is the crystal in the sense of Kashiwara
\cite{Ka}
associated to the module $F(\La)$.

The {\em crystal graph} is then the colored, directed graph with
vertex set $\Col^\La$ and an edge
$A \stackrel{i}{\rightarrow} B$ of color $i$
if $B = \tilde f_i (A)$; equivalently, 
$A = \tilde e_i (B)$. 
The connected component of this graph containing the
ground-state tableau $A^\La$ 
has vertex set $\Std^\La$.
This connected component is the crystal graph associated
to the highest weight module $V(\La)$; 
for $I$ bounded-below
it is the same
as the crystal graph from \cite{KN}.

\begin{Lemma}\label{dca}
For $A \in \Col^\La$ and $i \in I$ we have that
\begin{align*}
E_i L_A &=
[\eps_i(A)] L_{\tilde e_i(A)}
+ \sum_{\substack{B \in \Col^\La \\
\eps_i(B) < \eps_i(A)-1
}}
x_{A,B}^i(q) L_B,\\
F_i L_A &=
[\phi_i(A)] L_{\tilde f_i(A)}
+ \sum_{\substack{B \in \Col^\La \\
\phi_i(B) < \phi_i(A)-1
}}
y_{A,B}^i(q) L_B,
\end{align*}
for bar-invariant
 $x_{A,B}^i(q) \in q^{\eps_i(A)-2} \Z[q^{-1}]$ and
$y_{A,B}^i(q) \in q^{\phi_i(A)-2} \Z[q^{-1}]$.
The analogous statement 
with $L_A$ replaced by $D_A$
and $\Col^\La$ replaced by $\Std^\La$ everywhere
is also true.
\end{Lemma}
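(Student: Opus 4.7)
The plan is to reduce to the case when $I$ is bounded-below, exploit the bar-invariance of $E_i, F_i$ and $L_A$, and then identify the leading term and bound the remaining coefficients using Kashiwara's crystal basis machinery for the tensor product $F(\La) = V(\La_{m_1}) \otimes \cdots \otimes V(\La_{m_l})$; the statement for $D_A$ in $V(\La)$ will then follow by applying the projection $\pi$.

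For the reduction, choose $k \leq m_l$ with $k - 1 \in I$ and $k < i$. By (\ref{lstab}) the embedding $\iota_k : F(\La)_{\geq k} \hookrightarrow F(\La)_{\geq (k-1)}$ sends $L_A$ to $L_{A \sqcup (k-1)^l}$, and the appended row of $(k-1)$'s contains no $i$ or $(i+1)$, so the signature-rule data $\eps_i, \tilde e_i, \phi_i, \tilde f_i$ commute with the operation $A \mapsto A \sqcup (k-1)^l$. Combined with the $U_{\geq k}$-equivariance of $\iota_k$, this reduces the general case to $I$ bounded-below. In that case, the first input is bar-invariance: since $\overline{E_i} = E_i$ and $\overline{L_A} = L_A$, the vector $E_i L_A$ is bar-invariant, so in any expansion $E_i L_A = \sum_B c_{A,B}^i(q) L_B$ every coefficient $c_{A,B}^i(q)$ must be a bar-invariant Laurent polynomial in $q$. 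To identify the leading term and bound the remaining coefficients, I would invoke Kashiwara's theorem that the dual-canonical basis of a tensor product of upper global bases is itself an upper global basis, whose combinatorial crystal coincides with the one on $\Col^\La$ defined by the signature rule in \S\ref{sscg}. This provides the congruences $c_{A,B}^i(q) \in q^{\eps_i(A)-1}\Z[q^{-1}]$ for $B \neq \tilde e_i(A)$, and $c_{A,\tilde e_i(A)}^i(q) - q^{\eps_i(A)-1} \in q^{\eps_i(A)-2}\Z[q^{-1}]$, which are then promoted to the exact formula stated in the lemma by bar-invariance. The argument for $F_i L_A$ is entirely parallel.

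Finally, apply the $U$-equivariant projection $\pi: F(\La) \twoheadrightarrow V(\La)$ to the expansions just derived: by Theorem~\ref{las} and Corollary~\ref{kerb}, $\pi(L_C) = D_C$ for $C \in \Std^\La$ and $\pi(L_C) = 0$ otherwise, and when $A \in \Std^\La$ the tableaux $\tilde e_i(A)$ and $\tilde f_i(A)$ are either standard or equal to $\frownie$, so the required expansions for $D_A$ drop out upon discarding the terms indexed by non-standard $B$. The main obstacle is the precise one-sided degree bound $q^{\eps_i(A)-2}\Z[q^{-1}]$: bar-invariance alone is insufficient, and this bound encodes genuine crystal-lattice information about how $\{L_A\}$ sits inside $F(\La)$; in practice one verifies it by induction on the number $l$ of tensor factors, starting from the transparent action on each fundamental wedge $V(\La_{m_j})$ and controlling the contribution of the quasi-$R$-matrix to the coproduct expansion of $E_i$ and $F_i$.
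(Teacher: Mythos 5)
Your approach is essentially the paper's: identify $\{L_A\}$ with Kashiwara's upper global crystal base for the tensor product $F(\La)$, observe that the signature-rule crystal of $\S$\ref{sscg} is the tensor-product crystal, and then apply $\pi$ together with the final statement of Theorem~\ref{las} to get the $D_A$ version. Where you deviate is in the middle: you try to derive the precise bounds by combining weaker lattice congruences with bar-invariance, and you suspect this is insufficient and suggest falling back on an induction on $l$. In fact neither the ``promotion by bar-invariance'' nor an induction is needed, and the promotion step as you state it does not actually work. Bar-invariance together with $c_{A,\tilde e_i(A)}^i(q)-q^{\eps_i(A)-1}\in q^{\eps_i(A)-2}\Z[q^{-1}]$ only forces $c_{A,\tilde e_i(A)}^i(q)$ to lie in $q^{\eps_i(A)-1}+q^{\eps_i(A)-2}\Z[q^{-1}]\cap q^{2-\eps_i(A)}\Z[q]$, and for $\eps_i(A)\geq 3$ this set contains things like $[\eps_i(A)]+(q+q^{-1})$, so the leading coefficient is not pinned down to the quantum integer by these constraints alone. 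You correctly sense this (``bar-invariance alone is insufficient''), which leaves your write-up internally inconsistent. The resolution is simply that \cite[Proposition 5.3.1]{KaG} already asserts \emph{exactly} the statement of the lemma for any upper global crystal base: the coefficient of $G^{\operatorname{up}}(\tilde e_i b)$ is $[\eps_i(b)]$ on the nose, the remaining terms are indexed by $b'$ with $\eps_i(b')<\eps_i(b)-1$, and their coefficients are bar-invariant and lie in $q^{\eps_i(b)-2}\Z[q^{-1}]$. Once the identification of $\{L_A\}$ as the upper global crystal base is in hand, the lemma is a direct citation, not something to be reassembled from bar-invariance and lattice bounds. Your reduction to bounded-below $I$ and the application of $\pi$ are both fine.
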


\begin{proof}
Our dual-canonical basis is the upper global crystal base
in the sense of Kashiwara associated to the tensor product 
(\ref{bw}), and the crystal structure defined above is precisely 
the underlying 
crystal by Kashiwara's tensor product rule for crystals.
Given this, the first part of the lemma follows from
\cite[Proposition 5.3.1]{KaG}.
The final statement then follows on applying the 
projection $\pi$, using the final statement of Theorem~\ref{las}.
\end{proof}

\subsection{A twisted version}\label{stv}
We can modify the construction of $V(\La)$
and its dual-canonical basis
by changing the order of the tensor product
of fundamental representations in the definition (\ref{bw})
of the module $F(\La)$. As explained in detail in \cite[Theorem 26]{dual},
this modification leads naturally
to a family of monomial bases for $V(\La)$,
one for each permutation of the sequence
$m_1,\dots,m_l$, but always produces the same dual-canonical basis at the end. 
We want to briefly explain one of these twisted versions,
namely,
the one which is at the opposite extreme to the construction
explained so far.

Continue with $\La$ fixed as in (\ref{ms}),
so that $m_1 \geq \cdots \geq m_l$.
Define
\begin{equation}
\widetilde{F}(\La) := V(\La_{m_l}) \otimes\cdots\otimes V(\La_{m_1}),
\end{equation}
so we have taken the tensor product in the reverse order to 
that of $\S$\ref{ssmb}.
Let
$$
\widetilde{M}_A := (v_{A(m_l,l)} \wedge v_{A(m_l -1,l)}\wedge\cdots) \otimes \cdots \otimes
(v_{A(m_1,1)} \wedge v_{A(m_1-1,1)} \wedge \cdots)
$$
denote the monomial obtained by
reading the entries of $A$ down columns starting from
the rightmost column;
we refer to this way of reading the entries of $A$ as
{\em reverse-column-reading}.
This gives us the obvious monomial basis for this space:
\begin{equation}\label{b1}
\left\{\widetilde{M}_A\:\big|\:A \in \Col^\La\right\}.
\end{equation} 
There is also a bar involution on $\widetilde{F}(\La)$ defined exactly 
as before. From this, we get
the dual-canonical basis
\begin{equation}\label{b2}
\left\{\widetilde{L}_A\:\big|\:A \in \Col^\La\right\}
\end{equation}
in which $\widetilde{L}_A$ is the unique bar-invariant vector such that
$$
\widetilde{L}_A = \widetilde{M}_A + \text{(a $q \Z[q]$-linear combination of
$\widetilde{M}_B$'s for various $B>A$)}.
$$
Note the inequality $B > A$ here is the reverse of the
analogous inequality in the definition of $L_A$ in $\S$\ref{ssdcb}.
We get a twisted version of the polynomials
from (\ref{t}) by expanding
\begin{align}\label{t2}
\widetilde{M}_B &= \sum_{A \in \Col^\La} \widetilde{d}_{A,B}(q) \widetilde{L}_A.
\end{align}
Thus $\widetilde{d}_{A,A}(q) = 1$ and $\widetilde{d}_{A,B}(q) = 0$
unless $A \geq B$.

The crystal graph in this setting is defined in a similar way to
the crystal graph in $\S$\ref{sscg}, but starting 
from the enumeration $b_1,\dots,b_n$ of the boxes of
$A$ containing the entries $i$ or $(i+1)$
in reverse-column-reading order, i.e. if $r < s$ then $b_r$ 
is either to the right of $b_s$ or it is in the same column but
strictly above $b_s$. We call the resulting crystal structure
on the set $\Col^\La$
the {\em reverse crystal structure}.
Let $\RStd^\La$ 
denote the subset of $\Col^\La$ that indexes the vertices
from the connected component of the reverse crystal graph
generated by the ground-state tableau $A^\La$,
and set $\RStd^\La_\alpha := \RStd^\La \cap \Col^\La_\alpha.$
The set $\RStd^\La$ can be described directly as the set of all
{\em reverse-standard} $\La$-tableaux, that is,
the column-strict $\La$-tableaux with the property that,
on sliding all boxes in the $i$th column up by $(m_1-m_i)$ places,
the entries within each row are weakly decreasing from left to right;
this combinatorial description can be
derived from \cite[(2.2)]{dual}.

There is a canonical crystal isomorphism
\begin{equation}\label{rect}
\RStd^\La \stackrel{\sim}{\rightarrow} \Std^\La,
\quad
A \mapsto A^{\downarrow}
\end{equation}
between $\RStd^\La$ equipped with the reverse crystal structure
and $\Std^\La$ equipped with the usual crystal structure.
This map can easily be computed as a special case of the
{\em rectification map} of 
Lascoux and Sch\"utzenberger \cite{LS}; see
also \cite[$\S$A.5]{Fulton}
and \cite[(2.3)]{dual}.
We recall this explicitly after the statement of Theorem~\ref{thet}
below.

Again we have a canonical
projection
\begin{equation}
\widetilde{\pi}:\widetilde{F}(\La) \twoheadrightarrow V(\La)
\end{equation}
mapping $\widetilde{M}_{A^\La}$ to $v_\La$. 
We set $\widetilde{S}_A := \widetilde{\pi}(\widetilde{M}_A)$
for each $A \in \Col^\La$ and 
$\widetilde{D}_A := \widetilde{\pi}(\widetilde{L}_A)$
for $A \in \RStd^\La$.
For $I$ bounded-below the following theorem is 
another special case of \cite[Theorem 26]{dual}; it extends to arbitrary
$I$ by the usual direct limit argument.

\begin{Theorem}\label{thet}
The vectors $\{\widetilde{S}_A\:|\:A \in \RStd^\La\}$ and
$\{\widetilde{D}_A\:|\:A \in \RStd^\La\}$ give bases
for $V(\La)$. Moreover for each $A \in \RStd^\La$ we have that
$$
\widetilde{D}_A = D_{A^{\downarrow}}.
$$
Hence the basis $\{\widetilde{D}_A\:|\:A \in \RStd^\La\}$
coincides with Lusztig's dual-canonical basis (but it is parametrized in a non-standard way).
\end{Theorem}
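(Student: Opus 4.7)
The plan is to parallel the proofs of Theorem~\ref{las} and Corollary~\ref{kerb}. First I settle the bounded-below case by citing \cite[Theorem 26]{dual}, which is proved there for an arbitrary ordering of the tensor factors in (\ref{bw}) and so applies equally to the reverse tensor product $\widetilde{F}(\La)$. In the bounded-below case this yields directly that $\{\widetilde{S}_A \mid A \in \RStd^\La\}$ and $\{\widetilde{D}_A \mid A \in \RStd^\La\}$ are bases of $V(\La)$. After that, the substance of the statement is the identification $\widetilde{D}_A = D_{A^{\downarrow}}$.

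For this identification (still with $I$ bounded-below), the key point is the intrinsic nature of Lusztig's dual-canonical basis of $V(\La)$: it does not depend on the particular tensor-product realization used to construct it via Lusztig's method. Hence $\{\widetilde{D}_A \mid A \in \RStd^\La\}$ and $\{D_B \mid B \in \Std^\La\}$ are literally the same subset of $V(\La)$, and all that remains is to match the indexings. By Lemma~\ref{dca} and its reverse analogue (which follows from Kashiwara's tensor product rule applied to $\widetilde{F}(\La)$), the action of the Chevalley generators on each basis implements the corresponding abstract crystal operators; so the two indexings differ by the unique isomorphism between the standard crystal on $\Std^\La$ and the reverse crystal on $\RStd^\La$, both of which are the connected component generated by $v_\La$ in the crystal of $V(\La)$. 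That this isomorphism is the rectification map $A \mapsto A^{\downarrow}$ of (\ref{rect}) is exactly what is recalled from the Lascoux--Sch\"utzenberger combinatorics just before the theorem.

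Finally I extend to arbitrary $I$ via direct limits. The reverse analogue of Lemma~\ref{stable} holds for $\widetilde{F}(\La)$ by the same quasi-$R$-matrix argument, and the identity $\iota_k(\widetilde{M}_A) = \widetilde{M}_{A \sqcup (k-1)^l}$ is immediate from the definitions. Lusztig's lemma then gives $\iota_k(\widetilde{L}_A) = \widetilde{L}_{A \sqcup (k-1)^l}$, and applying the reverse analogue of diagram (\ref{uiota}) yields $\iota_k(\widetilde{D}_A) = \widetilde{D}_{A \sqcup (k-1)^l}$. A brief combinatorial check confirms that the rectification map commutes with $A \mapsto A \sqcup (k-1)^l$ (since a bottom row of $(k-1)$s contributes neutrally to every reduced $i$-signature), so the identification $\widetilde{D}_A = D_{A^{\downarrow}}$ glues consistently across the direct system. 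The main obstacle is the bounded-below identification itself: it rests on the uniqueness of Lusztig's dual-canonical basis together with the concrete Lascoux--Sch\"utzenberger description of the crystal isomorphism between the reverse and standard crystal structures on $\Col^\La$.
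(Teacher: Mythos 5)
Your proposal is correct and follows essentially the same route as the paper, whose entire proof is the one-sentence citation ``For $I$ bounded-below the following theorem is another special case of \cite[Theorem 26]{dual}; it extends to arbitrary $I$ by the usual direct limit argument.'' Where you diverge is in treating the identification $\widetilde{D}_A = D_{A^\downarrow}$ as a separate step needing an argument on top of \cite[Theorem 26]{dual}, rather than as part of what that theorem delivers. Your argument for it --- uniqueness of the upper global crystal base of $V(\La)$, plus Lemma~\ref{dca} and its reverse analogue to see that each indexing realizes its respective crystal structure, so the reparametrization is the unique crystal isomorphism $\RStd^\La \to \Std^\La$, namely rectification --- is a clean and valid alternative justification, and it is useful in that it makes explicit why the two parametrizations must be related by the Lascoux--Sch\"utzenberger map rather than leaving this buried in a citation. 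The one place to be careful is the assertion that $\{\widetilde{D}_A\}$ and $\{D_B\}$ are ``literally the same subset'' of $V(\La)$: this is true, but it does require knowing that the twisted construction also produces the intrinsic dual-canonical basis (the analogue of \cite[Remark~27]{dual} for $\widetilde{F}(\La)$), which you invoke only implicitly via ``Lusztig's method''; a sentence citing the twisted analogue of \cite[Remark~27]{dual}, or appealing directly to Kashiwara's uniqueness theorem for the upper global crystal base, would close this. Your direct-limit passage and the observation $(A\sqcup(k-1)^l)^\downarrow = A^\downarrow\sqcup(k-1)^l$ exactly match what the paper records after the theorem statement.
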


The rectification map (\ref{rect}) can be computed explicitly as follows.

Assume to start with that $I$ is
bounded-below, and recall the notion of
{\em row insertion} from \cite[$\S$1.1]{Fulton}.
Given $A \in \RStd^\La$, 
read the entries of $A$ in reverse-column-reading order
to obtain a sequence of integers
$a_1,\dots,a_n$.
Then, starting from the diagram of $\La$ with all boxes empty,
we use row insertion to 
successively insert the entries $a_1,\dots,a_n$ into the 
bottom row, bumping larger entries up and writing entries within each
row in weakly increasing order,
to obtain $A^\downarrow$ at the end.
This means that 
at the $r$th step the diagram has $(r-1)$ boxes filled in
and we need to insert the entry $a_r$ into the bottom row.
If $a_r$ is $\geq$ all entries in this row, simply add it to the 
first empty box in the row;
else find the smallest entry $b$ in the row that is strictly larger than
$a_r$, replace this entry $b$ by $a_r$, then insert $b$ into the next row up in a similar fashion.

To drop the assumption that $I$ is bounded-below,
we just note
that
$$
(A \sqcup (k-1)^l)^{\downarrow} =A^{\downarrow} \sqcup (k-1)^l
$$
if all entries of $A$ are $\geq k$.
Hence $A^\downarrow$
when $I$ is not bounded-below can be computed
by first choosing $k$ so that all entries in the $(k-1)$th
row of $A$ equal $k-1$,
then applying the above algorithm to 
the bounded-below tableau in
rows $k$ and above, leaving rows below the $k$th row 
untouched.

Here are some examples
of the map $A \mapsto A^\downarrow$:
$$
\begin{picture}(0,100)
\put(-150,72){\diagram{5\cr 4&2 \cr 3&1 \cr}}
\put(-100,72){\diagram{5\cr 2&4 \cr 1&3 \cr}}
\put(-118,71){$\mapsto$}

\put(-40,72){\diagram{5\cr 4&3 \cr 2&1 \cr}}
\put(10,72){\diagram{5\cr 3&4 \cr 1&2 \cr}}
\put(-8,71){$\mapsto$}

\put(70,72){\diagram{5\cr 3&4 \cr 1&2 \cr}}
\put(120,72){\diagram{4\cr 2&5 \cr 1&3 \cr}}
\put(102,71){$\mapsto$}

\put(-95,19){\diagram{5\cr 4&3 \cr 1&2 \cr}}
\put(-45,19){\diagram{3\cr 2&5 \cr 1&4 \cr}}
\put(-63,18){$\mapsto$}

\put(15,19){\diagram{5\cr 3&4 \cr 2&1 \cr}}
\put(65,19){\diagram{4\cr 3&5 \cr 1&2 \cr}}
\put(47,18){$\mapsto$}
\end{picture}
$$
As these examples may suggest, it is always the case that 
$A \geq A^\downarrow$ in the Bruhat order, as follows
from Corollary~\ref{yu} below.

The inverse of the map (\ref{rect}) 
gives another weight-preserving bijection
\begin{equation}\label{rect2}
\Std^\La \stackrel{\sim}{\rightarrow} \RStd^\La,
\quad
A \mapsto A^{\uparrow}.
\end{equation}
This can also be computed in terms of some
row insertions. We explain just in the case that $I$ is bounded-below.
Take $A \in \Std^\La$ and read its entries 
$a_1,\dots,a_n$ in column order.
Start with the empty diagram obtained 
by sliding all boxes in the $i$th column of the diagram of $\La$
up by $(m_1-m_i)$ places.
Then successively insert $a_n,\dots,a_1$ into the top row of 
this diagram,
this time bumping smaller entries down and writing
entries within each row in weakly decreasing order.
Thus, at the $r$th step, we need to insert $a_{n+1-r}$ into the top row.
If $a_{n+1-r}$ is $\leq$ all entries in this row, we simply add
it to the first empty box in the row;
else find the largest entry $b$ in the row that is strictly
smaller than $a_{n+1-r}$, replace $b$ by $a_{n+1-r}$, then
insert $b$ into the next row down in a similar fashion.
When all insertions are done, we then slide all boxes
in the $i$th column back down by $(m_1-m_i)$ places to end up with
the desired tableau $A^\uparrow$ of shape $\La$.
It is the case that $A^\uparrow \geq A$ in the Bruhat order.

\subsection{The canonical and quasi-canonical bases}
We are ready to define two more natural bases for $F(\La)$.
First, we have the {\em canonical basis}
\begin{equation}
\left\{T_A\:\big|\:A \in \Col^\La\right\}
\end{equation}
from 
\cite[$\S$27.3]{Lubook}.
By definition, $T_A$
is the unique bar-invariant vector in $F(\La)$
such that
$$
T_A = M_A + \text{(a $q^{-1}\Z[q^{-1}]$-linear combination
of $M_B$'s for various $B < A$)}.
$$
Second, we have the {\em quasi-canonical basis}
\begin{equation}
\left\{P_A\:\big|\:A \in \Col^\La\right\}
\end{equation}
which is defined from the equations
\begin{align}
P_A &= \sum_{B \in \Col^\La} d_{A,B}(q) M_B,\label{boo}\\
M_A &= \sum_{B \in \Col^\La} p_{A,B}(-q) P_B.
\end{align}
(We have simply transposed the transition matrices
from (\ref{t})--(\ref{e2}).)

The canonical and quasi-canonical bases 
have very similar properties, since both
are dual to the dual-canonical basis under certain 
pairings. 
In the case of the canonical basis, there is a twist
here since it is actually 
the dual basis to the dual-canonical basis on the space
$\widetilde{F}(\La)$
rather than on the space
$F(\La)$ itself.
In this article,
we usually prefer to work with the quasi-canonical basis
rather than the canonical basis, in part to avoid this
awkward twist but also because it is more convenient from the
point of view of the categorifications studied later on.

The sense in which the quasi-canonical basis is dual to the
dual-canonical basis is as follows. Introduce a sesquilinear form 
$\langle.,.\rangle$ on $F(\La)$ (anti-linear in the first argument, linear in the second)
such that
\begin{equation}\label{dr}
\langle M_A, \overline{M_B} \rangle = \delta_{A,B}
\end{equation}
for each $A, B \in \Col^\La$.
A straightforward computation using (\ref{boo})
and the formula obtained from (\ref{e2}) by applying the bar involution then
shows that
\begin{equation}\label{doe}
\langle P_A, L_B \rangle = \delta_{A,B}
\end{equation}
for $A, B \in \Col^\La$.
To formulate another property of the form $\langle.,.\rangle$, let $\tau:U \rightarrow U$
be the anti-linear anti-automorphism such that
\begin{equation}\label{taudef}
\tau(E_i) = q F_i D_i^{-1} D_{i+1},
\quad
\tau(F_i) = q^{-1} D_iD_{i+1}^{-1} E_i,
\quad
\tau(D_i) = D_i^{-1}.
\end{equation}
Then we have that
\begin{equation}\label{shap}
\langle u x, y \rangle = \langle x, \tau(u) y \rangle
\end{equation}
for $x, y \in F(\La)$ and $u \in U$.
This follows by a direct check using (\ref{dr}).

To make the sense in which the canonical basis is dual to the dual-canonical basis precise, recall the bases (\ref{b1})--(\ref{b2})
for $\widetilde{F}(\La)$.
Define a bilinear pairing
$(.,.):\widetilde{F}(\La) \times F(\La) \rightarrow 
\Q(q)$
by declaring that 
\begin{equation}
(\widetilde{M}_A, \overline{M_B}) = \delta_{A, B}
\end{equation}
for all $A, B \in \Col^\La$.
We refer to this pairing as the {\em contravariant form}.
By \cite[Theorem 11]{dual} we have that
\begin{equation}
(\widetilde{L}_A, T_B) = \delta_{A,B}
\end{equation}
for all $A, B \in \Col^\La$.
Note also by \cite[Lemma 3]{dual} that the form $(.,.)$ has the property that
\begin{equation}\label{contra}
(ux,y) = (x, u^* y)
\end{equation}
where $*:U \rightarrow U$ is the linear anti-automorphism
with $E_i^* = F_i, F_i^* = E_i$ and $D_i^* = D_i^{-1}$.

The sesquilinear form $\langle.,.\rangle$ restricts to 
a form on $V(\La)$ which is characterized uniquely by 
sesquilinearity, the property 
(\ref{shap}) and the fact that $\langle v_\La,v_\La\rangle = 1$.
We call this the {\em Shapovalov form} on $V(\La)$.
Similarly the contravariant form $(.,.)$ induces 
a form on $V(\La)$, namely, the unique symmetric bilinear form $(.,.)$ 
on $V(\La)$ such that (\ref{contra}) holds and $(v_\La,v_\La)=1$.
In fact the Shapovalov form and the contravariant form
on $V(\La)$ are closely related:

\begin{Lemma}\label{contra2}
For vectors $v, w \in V(\La)$ with $v$ of weight $\La-\alpha$,
we have that
\begin{itemize}
\item[(i)]
$\langle v,w \rangle = q^{\frac{1}{2}(2\La-\alpha,\alpha)}(\overline{v},w)$;
\item[(ii)]
$(v,w) = \overline{(\overline{w}, \overline{v})}$;
\item[(iii)]
$\langle v,w \rangle =
q^{(2\La-\alpha,\alpha)} \overline{\langle w,v \rangle}$.
\end{itemize}
\end{Lemma}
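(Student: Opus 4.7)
The plan is to exploit the uniqueness characterizations of the two forms: the Shapovalov form $\langle\cdot,\cdot\rangle$ is the unique sesquilinear form on $V(\La)$ satisfying $\langle uv, w\rangle = \langle v, \tau(u) w\rangle$ and $\langle v_\La, v_\La\rangle = 1$, while the contravariant form $(\cdot,\cdot)$ is the unique symmetric bilinear form on $V(\La)$ satisfying $(uv, w) = (v, u^* w)$ and $(v_\La, v_\La) = 1$. For each of (i) and (ii) I would exhibit a candidate form built from the other, verify the three characterizing properties, and invoke uniqueness; part (iii) then follows directly from (i) and (ii).

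For (i), define $f(v, w) := q^{\frac{1}{2}(2\La-\alpha, \alpha)}(\overline{v}, w)$ for $v$ of weight $\La - \alpha$, extended by zero to pairs of differently-weighted vectors. Sesquilinearity is immediate since bar is anti-linear and the scalar depends only on the weight of $v$, and $f(v_\La, v_\La) = 1$ follows from $(v_\La, v_\La) = 1$. The key step is to verify $f(uv, w) = f(v, \tau(u)w)$ on the generators $D_i, E_i, F_i$. The case $u = D_i$ is a short direct computation using $\overline{D_i} = D_i^{-1}$ and $\tau(D_i) = D_i^{-1}$. For $u = F_i$, using $\overline{F_i} = F_i$, $F_i^* = E_i$ from (\ref{contra}), and $\tau(F_i) = q^{-1} D_i D_{i+1}^{-1} E_i$, the equality of scalars reduces to the identity
\[
\tfrac{1}{2}(2\La - \alpha - \alpha_i, \alpha + \alpha_i) - \tfrac{1}{2}(2\La - \alpha, \alpha) = -1 + (\La - \alpha, \alpha_i),
\]
which follows from $(\alpha_i, \alpha_i) = 2$; the case $u = E_i$ is parallel, using $\tau(E_i) = q F_i D_i^{-1} D_{i+1}$ and $E_i^* = F_i$. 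Uniqueness of $\langle\cdot,\cdot\rangle$ then gives $f = \langle\cdot,\cdot\rangle$.

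For (ii), set $g(v, w) := \overline{(\overline{w}, \overline{v})}$. This is bilinear (the two bars cancel on scalars), symmetric (by symmetry of $(\cdot,\cdot)$), and satisfies $g(v_\La, v_\La) = 1$ since $\overline{v_\La} = v_\La$. The $*$-compatibility $g(uv, w) = g(v, u^* w)$ reduces, after unwinding the bars and using $(x, uy) = (u^* x, y)$, to the commutativity of the bar involution and $*$ as operators on $U$; this is verified on generators, since both $\overline{\cdot} \circ *$ and $* \circ \overline{\cdot}$ send $D_i \mapsto D_i$, $E_i \mapsto F_i$, $F_i \mapsto E_i$. Uniqueness of $(\cdot,\cdot)$ forces $g = (\cdot,\cdot)$, which is (ii). Part (iii) is then obtained by applying (i) to rewrite $\overline{\langle w, v\rangle} = q^{-\frac{1}{2}(2\La-\alpha,\alpha)} \overline{(\overline w, v)}$, using (ii) to convert $\overline{(\overline w, v)}$ into $(\overline v, w)$, and multiplying by $q^{(2\La-\alpha,\alpha)}$ to recover (i) for $\langle v, w\rangle$.

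The main obstacle is the $q$-factor bookkeeping in the $\tau$-compatibility check of (i): the coefficient $\tfrac{1}{2}$ in the exponent and the precise factors $q^{\pm 1} D_iD_{i+1}^{-1}$ appearing in $\tau(E_i), \tau(F_i)$ are calibrated exactly to compensate the weight shift by $\pm \alpha_i$, and it is really the relation $(\alpha_i,\alpha_i)=2$ that makes everything balance.
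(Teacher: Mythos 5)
Your proof is correct, and it is essentially the argument the paper has in mind: the paper's proof establishes (i) and (ii) by induction on $\height(\alpha)$ via the defining properties (\ref{shap}) and (\ref{contra}), which amounts to exactly the generator-level $\tau$- and $*$-compatibility checks you carry out, merely repackaged as a uniqueness argument; your derivation of (iii) from (i) and (ii) also matches the paper.
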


\begin{proof}
The first two equalities follow by induction on
$\height(\alpha)$; for the induction step 
consider $v = F_i v'$ for some $i \in I$
and $v'$ of weight $\La - (\alpha-\alpha_i)$
and use the defining properties (\ref{shap}) and
(\ref{contra}).
The third equality is a consequence of the first two.
\end{proof}

The following result explains the relationship between the
quasi-canonical and canonical bases of $F(\La)$ and
the usual canonical basis of the submodule
$V(\La)$ (which is Kashiwara's
lower global crystal base).
Recall the bijection from (\ref{rect2})
which identifies the two natural choices of
indexing set $\Std^\La$ and $\RStd^\La$ in this subject.

\begin{Theorem}\label{maincb}
The vectors $\{T_{A}\:|\:A \in \RStd^\La\}$
give a basis for $V(\La)$ which coincides with
Lusztig's canonical basis.
Moreover, for $A \in \Std^\La_\alpha$, we have that
$$
P_A = q^{\frac{1}{2}(2\La-\alpha,\alpha)}T_{A^\uparrow}.
$$
Hence the vectors 
$\{P_A\:|\:A \in \Std^\La\}$ also give a basis for $V(\La)$
which coincides with Lusztig's canonical basis up to rescaling each vector
by a suitable power of $q$.
Finally,
$$
\langle P_A, D_B \rangle = ( T_{A^\uparrow}, D_B ) = \delta_{A,B}
$$
for all $A, B \in \Std^\La$.
\end{Theorem}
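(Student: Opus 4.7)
My approach is to deduce all four parts of Theorem~\ref{maincb} from the two dual-pairing identities $\langle P_A, L_B\rangle = \delta_{A,B}$ of (\ref{doe}) and $(\widetilde{L}_A, T_B) = \delta_{A,B}$, combined with the combinatorial identification $\widetilde{D}_A = D_{A^\downarrow}$ from Theorem~\ref{thet} and the comparison Lemma~\ref{contra2} between the Shapovalov and contravariant forms.

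The first step is to realise $V(\La)$ as a subspace of $F(\La)$ in the two ways
$$
V(\La) = \mathrm{span}\{P_A : A \in \Std^\La\} = \mathrm{span}\{T_A : A \in \RStd^\La\}.
$$
This is a Schur's-lemma argument using multiplicity one: the $\La$-weight space of $F(\La)$ and of $\widetilde{F}(\La)$ is one-dimensional, so $V(\La)$ appears with multiplicity one in each, and hence has a unique $U$-stable complement in $F(\La)$, namely $\ker\pi$. The $\tau$-invariance of $\langle\cdot,\cdot\rangle$ together with non-degeneracy of its restriction to $V(\La)$ (the form pairs $v_\La$ with itself to $1$) shows that the orthogonal complement of $V(\La)\subset F(\La)$ is a $U$-submodule complementary to $V(\La)$, which must therefore equal $\ker\pi$; combining with Corollary~\ref{kerb} and the duality $\langle P_A, L_B\rangle = \delta_{A,B}$ gives the first identification. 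The second is obtained analogously from the $*$-invariance of the contravariant form and the twisted analogue of Corollary~\ref{kerb} saying $\ker\widetilde\pi = \mathrm{span}\{\widetilde{L}_A : A\notin\RStd^\La\}$.

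With these in hand, the pairings in the final assertion follow at once. Since $P_A \in V(\La)$ and $L_B - D_B \in \ker\pi$ are orthogonal under $\langle\cdot,\cdot\rangle$, we get $\langle P_A, D_B\rangle = \langle P_A, L_B\rangle = \delta_{A,B}$. Likewise $(\widetilde{D}_A, T_B) = (\widetilde{L}_A, T_B) = \delta_{A,B}$ for $A, B \in \RStd^\La$; substituting $\widetilde{D}_A = D_{A^\downarrow}$ via Theorem~\ref{thet} and flipping the arguments via Lemma~\ref{contra2}(ii), using that both $T_B$ and $D_{A^\downarrow}$ are bar-invariant, yields $(T_B, D_{A^\downarrow}) = \delta_{A,B}$, which after relabelling via the bijection $A\mapsto A^\uparrow$ becomes $(T_{A^\uparrow}, D_B) = \delta_{A,B}$ for $A, B \in \Std^\La$.

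The key relation $P_A = q^{\frac{1}{2}(2\La-\alpha,\alpha)}T_{A^\uparrow}$ for $A \in \Std^\La_\alpha$ is now a one-line consequence of Lemma~\ref{contra2}(i): applying it with $v = P_A$ of weight $\La - \alpha$ gives $\delta_{A,B} = \langle P_A, D_B\rangle = q^{\frac{1}{2}(2\La-\alpha,\alpha)}(\overline{P_A}, D_B)$ for every $B \in \Std^\La$; comparison with $(T_{A^\uparrow}, D_B) = \delta_{A,B}$ forces $\overline{P_A} = q^{-\frac{1}{2}(2\La-\alpha,\alpha)}T_{A^\uparrow}$ by non-degeneracy of the contravariant form on $V(\La)$ against the basis $\{D_B\}$, and the claim follows on applying the bar involution using that $T_{A^\uparrow}$ is bar-invariant. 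Part (C) is then immediate from (A), and for the identification of $\{T_A : A \in \RStd^\La\}$ with Lusztig's canonical basis I would cite the standard characterisation of the canonical basis of $V(\La)$ as the bar-invariant basis dual, under the contravariant form, to the dual-canonical basis identified in Theorem~\ref{las}, essentially \cite[Theorem 11]{dual}. The main subtlety I anticipate is the Schur-type identification in Step~1: one must verify carefully that $V(\La)^\perp$ is a $U$-submodule despite the $\tau$- and $*$-twists in the invariance properties of the forms, and use the multiplicity-one appearance of $V(\La)$ to rule out any other $U$-stable complement.
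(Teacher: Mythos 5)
Your proposal is correct and arrives at the same conclusions, but it substitutes a self-contained derivation for one of the paper's external citations. The paper obtains the first statement (that $\{T_A : A\in\RStd^\La\}$ is Lusztig's canonical basis) and the pairing identity $(T_{A^\uparrow},D_B)=\delta_{A,B}$ simultaneously by citing \cite[Proposition 27.1.7]{Lubook} and \cite[Theorem 29]{dual}; you instead re-derive the pairing identity from $(\widetilde L_A,T_B)=\delta_{A,B}$ (which is stated with citation \cite[Theorem 11]{dual}), the identification $\widetilde D_A=D_{A^\downarrow}$ of Theorem~\ref{thet}, and the flip formula of Lemma~\ref{contra2}(ii). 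This is a genuinely different path through the material and is a bit more transparent, since it reduces everything to properties of the bar involution and the two forms that are already on the page. Your remaining steps — realising $V(\La)=\mathrm{span}\{P_A:A\in\Std^\La\}$ as the $\langle\cdot,\cdot\rangle$-orthogonal of $\ker\pi$ using Corollary~\ref{kerb}, (\ref{doe}) and (\ref{shap}), then comparing $\langle P_A,D_B\rangle$ with $(T_{A^\uparrow},D_B)$ via Lemma~\ref{contra2}(i) and non-degeneracy against $\{D_B\}$ to extract $P_A=q^{\frac12(2\La-\alpha,\alpha)}T_{A^\uparrow}$ — coincide with the paper's proof essentially verbatim.

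Two small points worth tightening. First, the claim $(\widetilde D_A,T_B)=(\widetilde L_A,T_B)$ requires that $\ker\widetilde\pi\subset\widetilde F(\La)$ be orthogonal to $V(\La)\subset F(\La)$ under the contravariant form; you gesture at this via ``the twisted analogue of Corollary~\ref{kerb}'' and the $*$-invariance of $(\cdot,\cdot)$, but the paper never explicitly states the twisted analogue (it is implicit in \cite[Theorem 26]{dual}), so if you take this route you should spell it out. Second, when you flip via Lemma~\ref{contra2}(ii) you need both $T_B$ and $D_{A^\downarrow}$ to already be known to lie in $V(\La)$, since that lemma is formulated for the Shapovalov and contravariant forms on $V(\La)$; this is fine given your Step~1, but the logical dependence should be made visible so the argument is not read as circular.
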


\begin{proof}
The first statement is a special case of
\cite[Proposition 27.1.7]{Lubook}; see also
\cite[Theorem 29]{dual} where the fact that
\begin{equation}\label{step}
(T_{A^\uparrow}, D_B) = \delta_{A,B}
\end{equation}
is derived at the same time.

Next we claim that the vectors $\{P_A\:|\:A \in \Std^\La\}$
also give a basis for $V(\La)$. Let
$X$ denote the subspace of $F(\La)$ 
spanned by the vectors $\{P_A\:|\:A \in \Std^\La\}$.
In view of 
Corollary~\ref{kerb} and (\ref{doe}), we have that
$$
X = \{v \in F(\La)\:|\:\langle v,w \rangle = 0\text{ for all $w \in \ker \pi$}\}.
$$
Combining this with (\ref{shap}) and the fact that $\ker \pi$ is a $U$-submodule of $F(\La)$, it follows that $X$ is a $U$-submodule
of $F(\La)$ too.
Since $X$ contains the vector $v_\La$ (that is $P_{A^\La}$) we deduce
that $V(\La) \subseteq X$. Finally the weight spaces of $V(\La)$ and $X$
have the same dimensions, so we actually have that $V(\La) = X$.
This proves the claim.

For $A, B \in \Std^\La$, 
we have by (\ref{doe}) that
\begin{equation}\label{step2}
\langle P_A, D_B \rangle
= \langle P_A, L_B \rangle = \delta_{A,B}.
\end{equation}
On the other hand, assuming in addition that $A$ is of weight $\La-\alpha$,
we have by Lemma~\ref{contra2}(i) and (\ref{step}) that
$$
\langle q^{\frac{1}{2} (2\La-\alpha,\alpha)} T_{A^\uparrow}, D_B
\rangle = 
( T_{A^\uparrow}, D_B ) = \delta_{A,B}.
$$
Comparing with (\ref{step2})
this shows that $P_A = 
q^{\frac{1}{2} (2\La-\alpha,\alpha)} T_{A^\uparrow}$.
\end{proof}

\begin{Corollary}\label{yu}
Suppose that $A \in \Std^\La_\alpha$ and $B \in \Col^\La_\alpha$
for some $\alpha \in Q_+$, and set
$a := \frac{1}{2} (2\La-\alpha,\alpha)$.
Then,
$d_{A,B}(q) = 0$ unless $A \leq B \leq A^\uparrow$.
Moreover,
$d_{A,A}(q) = 1$, 
$d_{A, A^\uparrow}(q) = q^a$,
and
$d_{A,B}(q)$ belongs to
 $q \Z[q] \cap q^{a-1} \Z[q^{-1}]$
for $A < B < A^\uparrow$.
\end{Corollary}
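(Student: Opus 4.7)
The plan is to deduce everything from the identification $P_A = q^{a} T_{A^{\uparrow}}$ provided by Theorem~\ref{maincb}, by expanding both sides in the monomial basis $\{M_B\}$ and comparing coefficients.

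First I would write out the two natural expansions of $P_A$ in the monomial basis. On the one hand, the definition (\ref{boo}) gives
\[
P_A = \sum_{B \in \Col^\La_\alpha} d_{A,B}(q)\, M_B,
\]
and from the standard theory of the basis change $L_B \leftrightarrow M_B$ (which is upper-unitriangular with respect to the Bruhat order and has off-diagonal entries in $q\Z[q]$) one has $d_{A,A}(q) = 1$, $d_{A,B}(q) \in q\Z[q]$ for $A < B$, and $d_{A,B}(q) = 0$ unless $A \leq B$. On the other hand, expanding the canonical basis vector $T_{A^{\uparrow}}$ in the monomial basis using its defining property gives
\[
T_{A^{\uparrow}} = M_{A^{\uparrow}} + \sum_{B < A^{\uparrow}} t_{B}(q)\, M_B
\]
with each $t_B(q) \in q^{-1}\Z[q^{-1}]$ and $t_B(q) = 0$ unless $B \leq A^{\uparrow}$.

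Next, multiplying through by $q^a$ and invoking $P_A = q^a T_{A^{\uparrow}}$ from Theorem~\ref{maincb}, I would equate coefficients of $M_B$ in the two expressions for $P_A$. This identifies
\[
d_{A,B}(q) \;=\; q^{a}\, t_{B}(q) \quad (\text{with } t_{A^{\uparrow}}(q) = 1),
\]
from which all the stated conclusions follow directly: the support condition $d_{A,B}(q) = 0$ unless $A \leq B \leq A^{\uparrow}$ is the conjunction of the two supports; the value $d_{A,A^\uparrow}(q) = q^a$ comes from the leading coefficient $t_{A^\uparrow}(q) = 1$; and for $A < B < A^\uparrow$, the inclusion $t_B(q) \in q^{-1}\Z[q^{-1}]$ multiplied by $q^a$ gives $d_{A,B}(q) \in q^{a-1}\Z[q^{-1}]$, while the already-noted fact that $d_{A,B}(q) \in q\Z[q]$ then yields the claimed intersection $q\Z[q] \cap q^{a-1}\Z[q^{-1}]$.

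The only real substance here is invoking Theorem~\ref{maincb}, which has already been proved; the rest is just comparing coefficients, so I do not anticipate any genuine obstacle. One small bookkeeping point worth being careful about is the compatibility of the Bruhat-order constraints coming from the two expansions (namely $B \geq A$ from the $P_A$-side and $B \leq A^{\uparrow}$ from the $T_{A^{\uparrow}}$-side), which together force the interval $A \leq B \leq A^{\uparrow}$ and in particular recover the assertion, noted after (\ref{rect2}), that $A \leq A^{\uparrow}$ in the Bruhat order.
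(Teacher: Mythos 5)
Your proof is correct and follows essentially the same route as the paper's: invoke $P_A = q^a T_{A^\uparrow}$ from Theorem~\ref{maincb}, expand both $P_A$ (via (\ref{boo}) and the unitriangularity properties noted after (\ref{t})--(\ref{e2})) and $q^a T_{A^\uparrow}$ (via the defining property of the canonical basis) in the monomial basis, and compare coefficients. The bookkeeping you flag at the end is exactly how the paper pins down the Bruhat interval $A \leq B \leq A^\uparrow$.
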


\begin{proof}
We 
know by Theorem~\ref{maincb} that $P_A = q^a  T_{A^\uparrow}$.
By the definition of $T_{A^\uparrow}$ this shows that
$$
P_A = q^a M_{A^\uparrow} + \text{(a $q^{a-1}\Z[q^{-1}]$-linear
combination of $M_B$'s for $B < A^\uparrow$)}.
$$
Recalling (\ref{boo}),
this shows that $d_{A,A^\uparrow}(q) = q^a$,
$d_{A,B}(q)$ is zero unless
$B \leq A^\uparrow$,
and $d_{A,B}(q)$ belongs to $q^{a-1} \Z[q^{-1}]$
if $B < A^\uparrow$.
On the other hand, we already know 
from (\ref{t})
that $d_{A,A}(q) = 1$,
$d_{A,B}(q)$ is zero unless $B \geq A$,
and $d_{A,B}(q)$ belongs to $q \Z[q]$ 
for $B > A$.
\end{proof}

We mention finally that there is an analogue of Theorem~\ref{maincb}
(and its corollary)
in the twisted case. We only need to know one thing about this:
if we define the quasi-canonical basis elements
for $\widetilde{F}(\La)$ by setting
\begin{equation}
\widetilde{P}_A := \sum_{B \in \Col^\La} \widetilde{d}_{A,B}(q) \widetilde{M}_B,\label{boo3}
\end{equation}
where $\widetilde{d}_{A,B}(q)$ is the polynomial from (\ref{t2}),
then $\{\widetilde{P}_A\:|\:A \in \RStd^\La\}$
is again equal to Lusztig's canonical basis for $V(\La)$
up to rescaling by suitable powers of $q$.
In fact it happens that
\begin{equation}
\widetilde{P}_A = P_{A^{\downarrow}}
\end{equation}
for each $A \in \RStd^\La$,
which is the dual formula to the one in Theorem~\ref{thet}.

\subsection{Multipartitions}
Almost every combinatorial definition so far has an
alternative formulation using the language of
$l$-multipartitions instead of column-strict $\La$-tableaux.
This alternative language is particularly convenient in the case that
$I$ is not bounded-below, indeed, it is the language used in almost
all of the existing literature surrounding Ariki's categorification theorem.
In this subsection, we want to 
explain the dictionary between the two languages.

To start with let $\mathscr P$ denote the set of all 
{\em partitions}
$\rpar = (\rpar_1,\rpar_2,\dots)$ in the usual sense,
i.e. weakly decreasing sequences of non-negative integers.
We write $|\rpar|$ for 
$\rpar_1+\rpar_2+\cdots$.
We often identify $\rpar \in \mathscr P$ 
with its {\em Young diagram} drawn according to the usual
English convention, i.e. rows are indexed $1,2,3,\dots$ from top to bottom,
columns are indexed $1,2,3,\dots$ from left to right, and
there is a box in the $i$th row and $j$th column whenever
$j \leq \rpar_i$.
Here for example is the Young diagram of
$\rpar = (3,2)$:
$$
\diagram{&&\cr&\cr}
$$
We stress that the Young diagram of a partition $\rpar$
plays a quite different role in this article to the diagram of the
dominant weight $\La \in P_+$ from $\S$\ref{sscomb}.

By an {\em $l$-multipartition} we mean 
an $l$-tuple
$\bpar = (\rpar^{(1)},\dots,\rpar^{(l)}) \in \mathscr P^l$.
%, set $|\bpar| := |\rpar^{(1)}|+ \cdots + |\rpar^{(l)}|$.
The {\em diagram} of an $l$-multipartition
$\bpar$ means 
the column vector containing the Young diagrams 
$\rpar^{(1)},\dots,\rpar^{(l)}$ in order from top to bottom.
If the diagram of $\bpar$ contains $d$ boxes then we 
say
that $\bpar$ is an $l$-multipartition of $d$.
Here for example is the diagram of the $2$-multipartition
$((3,2,1), (3,1))$ of 10 with boxes filled by the integers
$1,\dots,10$:
\begin{equation}\label{eg}
\begin{pmatrix}
\diagram{1&3&9\cr2&4\cr7\cr}\\
\diagram{5&6&10\cr8\cr}
\end{pmatrix}
\end{equation}
The {\em addable} and {\em removable nodes} of 
an $l$-multipartition $\bpar$
mean the places a box 
can be added to or removed from its diagram
to again produce a valid 
diagram of an $l$-multipartition.
Because of our convention of viewing the diagram of an $l$-multipartition
as a column vector of Young diagrams, it makes sense to talk about 
one such node being {\em above} or {\em below} another in the diagram.
In the above example, the removable node containing entry $9$ is above the
one containing entry $8$.

Continue with $\La$ fixed as in (\ref{ms}).
Define the {\em $\La$-residue}
of box in the diagram of an $l$-multipartition
to be the integer $m_k + j-i$,
assuming the box
is in the $i$th row and $j$th column of the $k$th Young diagram.
Let ${\mathscr P}^\La$ denote the set of
all $l$-multipartitions $\bpar$ such that the $\La$-residues
of all
the boxes in the diagram of $\bpar$ belong to $I$;
we refer to elements of $\mathscr P^\La$
as {\em $\La$-multipartitions}.
There is then a bijection
\begin{equation}\label{im}
\Col^\La \stackrel{\sim}{\rightarrow} {\mathscr P}^\La,\qquad
A \mapsto \bpar(A)
\end{equation}
defined by letting
$\bpar(A) = (\rpar^{(1)},\dots,\rpar^{(l)})$ denote
$l$-multipartition
such that the parts of $\rpar^{(k)}$ are equal to the entries
in the $k$th column of the entry-wise difference $(A - A^\La)$.

Using the bijection (\ref{im}), it is usually routine
to translate earlier definitions involving column-strict tableaux
into equivalent notions involving multipartitions.
To illustrate this, we explain how to lift the
crystal operators from (\ref{te}) to obtain maps
\begin{equation}
\tilde e_i, \tilde f_i:\mathscr P^\La \rightarrow
\mathscr P^\La \sqcup \{\frownie\}
\end{equation}
Take $\bpar \in \mathscr P^\La$
and $i \in I$. 
Enumerate the 
addable and removable nodes 
of $\La$-residue $i$
in the diagram of 
$\bpar$ 
as $b_1,\dots,b_n$ in order from top to bottom.
Starting from the sequence $(\sigma_1,\dots,\sigma_n)$
in which $\sigma_r$ is $+$ if $b_r$ is addable
or $-$ if $b_r$ is removable, pass to 
the reduced $i$-signature by cancelling $-+$ pairs 
as in $\S$\ref{sscg}.
Then 
$\tilde e_i (\bpar)$ is obtained by removing a box from the
position corresponding to the leftmost $-$ in the reduced $i$-signature,
or $\frownie$ if no such $-$ exists.
Similarly
$\tilde f_i (\bpar)$ is obtained by adding a box to
the position corresponding to the rightmost $+$,
or $\frownie$ if no such $+$ exists.

Under the bijection (\ref{im}), the set 
$\Std^\La$ of standard $\La$-tableaux 
from $\S$\ref{sscomb} corresponds to the
set $\mathscr{R\!P}^\La$ of {\em restricted $\La$-multipartitions},
namely, the multipartitions
$\bpar = (\rpar^{(1)},\dots,\rpar^{(l)}) \in \mathscr P^\La$
such that 
\begin{equation}\label{res}
\rpar^{(i)}_{j+m_i-m_{i+1}} \leq \rpar^{(i+1)}_{j}
\end{equation}
for each $i=1,\dots,l-1$ and $j \geq 1$.
This is the connected component of the crystal graph
generated by the empty multipartition $\varnothing$.
On the other hand the set $\RStd^\La$
from $\S$\ref{stv} corresponds to the set
$\widetilde{\mathscr{R\!P}}^\La$
of multipartitions
$\bpar = (\rpar^{(1)},\dots,\rpar^{(l)}) \in \mathscr P^\La$
such that \begin{equation}\label{reg}
\rpar^{(i)}_j +m_i-m_{i+1}\geq \rpar^{(i+1)}_j
\end{equation}
for each $i=1,\dots,l-1$ and $j \geq 1$.
The maps from (\ref{rect}) and (\ref{rect2}) can be interpreted as mutually 
inverse
bijections between the sets
$\mathscr{R\! P}^\La$ and $\widetilde{\mathscr{R\! P}}^\La$.

\subsection{\boldmath Integral forms and specialization at $q=1$}\label{ssspec}
In the rest of the article we work only at $q=1$.
To explain how to specialize formally,
let $\A := \Z[q,q^{-1}]$ and 
$U_{\!\A}$ denote
Lusztig's $\A$-form for $U$.
This is the $\A$-subalgebra of $U$ generated
by the quantum
divided powers $E_i^{(r)}$, $F_i^{(r)}$,
the elements $D_i, D_i^{-1}$, and the elements
$$
\left[\!\!\!\begin{array}{c}D_i\\r\end{array}\!\!\!\right]
:= \prod_{s=1}^r \frac{D_i q^{1-s} - D_i^{-1} q^{s-1}}{q^s-q^{-s}}
$$
for all $i$ and $r \geq 0$.
The Hopf algebra structure on $U$ makes
$U_{\!\A}$ into a Hopf algebra over $\A$.
The bar involution and the map $\tau$ from (\ref{taudef}) also restrict to 
well-defined maps on $U_{\!\A}$.

There are two natural $\A$-forms for $V(\La)$,
namely, the {\em standard form} $V(\La)_{\A}$
which is obtained by applying $U_{\!\A}$
to the highest weight vector $v_\La$, and the 
{\em costandard form} $V(\La)_{\A}^*$
which is the dual lattice
under the Shapovalov form:
$$
V(\La)_{\A}^* := \left\{v \in V(\La)\:|\:\langle v,w\rangle  \in \A\text{ for all }w \in V(\La)_{\A}\right\}.
$$
The costandard form $V(\La)^*_{\A}$ is the $\A$-submodule
of $V(\La)$
spanned by either the dual-canonical basis or the standard monomial basis.
Moreover, $V(\La)_{\A}$, which is naturally a $U_{\!\A}$-submodule
of $V(\La)^*_{\A}$, is the $\A$-span of the 
quasi-canonical basis $\{P_A\:|\:A \in \Std^\La\}$.
We also have an obvious $\A$-form $F(\La)_{\A}$
for $F(\La)$, namely, the $\A$-span of any of the
four natural bases $\{M_A\}, \{L_A\}, \{T_A\}$ or $\{P_A\}$.
The projection $\pi\left(F(\La)_{\A}\right)$ is precisely the 
costandard form $V(\La)^*_{\A}$ for $V(\La)$.
Similarly there is an $\A$-form 
$\widetilde{F}(\La)_{\A}$
for $\widetilde{F}(\La)$.

With these $\A$-forms in hand, we can finally specialize at $q=1$.
Let $U_\Z$ be the Kostant $\Z$-form for the universal enveloping algebra of
$\mathfrak{gl}_{I_+}(\C)$, with Chevalley generators $e_i, f_i\:(i \in I)$.
Let
\begin{align}
F(\La)_\Z &:= \Z \otimes_{\A} F(\La)_{\A},\label{FZ}\\
\widetilde{F}(\La)_\Z &:= \Z \otimes_{\A} 
\widetilde{F}(\La)_{\A},\label{FZ2}\\
V(\La)_\Z &:= \Z \otimes_{\A} V(\La)_{\A},\label{VZ}\\
V(\La)^*_\Z &:= \Z \otimes_{\A} V(\La)^*_{\A},\label{VZZ}
\end{align}
viewing $\Z$ as an $\A$-module so that $q$ acts as $1$.
These are naturally $U_\Z$-modules;
the divided powers $e_i^{(r)}$
and $f_i^{(r)}$ act as $1 \otimes E_i^{(r)}$
and $1 \otimes F_i^{(r)}$, respectively.
Of course $V(\La)_\Z$ (resp.\ $V(\La)_\Z^*$) 
is the standard (resp.\ costandard) form for the irreducible highest weight
module for $\mathfrak{gl}_{I_+}(\C)$ of highest weight $\La$.

For the remainder of the article we will only be working 
at $q=1$, so it will not cause confusion to use the same notation
$\cM_A, \cL_A, \dots$
for the specializations of the basis
elements $M_A, L_A, \dots$ defined above at $q=1$.
In particular, making this abuse of notation, 
$\{\cS_A\:|\:A \in \Std^\La\}$ becomes precisely the classical standard monomial
basis for $V(\La)^*_\Z$, which can 
be constructed directly as above without going via quantum groups.
The bases
 $\{\cP_A\:|\:A \in \Std^\La\}$
and $\{\cD_A\:|\:A \in \Std^\La\}$ are 
 the usual canonical and dual-canonical bases
of $V(\La)_\Z$ and $V(\La)_\Z^*$, respectively.
Moreover these two bases are dual to each other under
the Shapovalov form (which at $q=1$ coincides with
the contravariant form).

\section{The categorification theorems}

Now we categorify the spaces (\ref{FZ})--(\ref{VZZ})
using parabolic category $\mathcal O$ and degenerate cyclotomic Hecke algebras.
Throughout the section 
we let $I$ be a non-empty set of consecutive integers, and fix all the other notation from
section 2 according to this choice.
Whenever we discuss parabolic category $\mathcal O$ or
appeal to Schur-Weyl duality, we will assume further that
$I$ is bounded-below. 

\subsection{\boldmath Parabolic category $\mathcal O$}\label{spco}
In this subsection, we assume that the index set $I$ is bounded-below.
We want to recall a well known categorification of the $U_\Z$-module
$F(\La)_\Z$ from (\ref{FZ})
using blocks of parabolic category $\mathcal O$
associated to the general linear Lie algebra. 
We refer the reader to \cite[$\S$4.4]{rep} for 
a more detailed treatment.

Since $I$ is bounded-below, there are only finitely many boxes
in the diagram of $\La$. Let $n$ denote this number.
Also let $n_i := m_i+1-\inf(I)$.
This is the number of boxes in the $i$th column of the diagram
of $\La$, so
$$
n = n_1+\cdots+n_l.
$$
We then have simply that
\begin{equation*}
F(\La)_\Z = {\textstyle\bigwedge}^{n_1} V_\Z \otimes\cdots\otimes
{\textstyle\bigwedge}^{n_l} V_\Z
\end{equation*}
where 
$V_\Z$ is the natural $U_\Z$-module (and all tensor products
are over $\Z$).

Consider the Lie algebra 
$\mathfrak{g} := \mathfrak{gl}_n(\C)$,
with its standard Cartan subalgebra $\mathfrak{h}$ 
of diagonal matrices and its standard Borel subalgebra
$\mathfrak{b}$ of upper triangular matrices.
Let $\eps_1,\dots,\eps_n \in \mathfrak{h}^*$ denote the 
standard coordinate functions, so $\eps_i$ picks out the
$i$th diagonal entry of a diagonal matrix.
Given any $\La$-tableau $A$, we 
let
$\cL(A)$ denote the usual irreducible highest weight module for $\mathfrak{g}$
of highest weight
$a_1 \eps_1 + (a_2+1) \eps_2+\cdots + (a_n+n-1) \eps_n$,
where $a_1,\dots,a_n$ is the column-reading
of $A$, i.e. the sequence obtained by reading its entries
down columns starting with the leftmost column.
For $A, B \in \Col^\La$, the 
irreducible modules 
$\cL(A), \cL(B)$ have the same central character
if and only if $\wt(A) = \wt(B)$ according to (\ref{wtdef}).

For $\alpha \in Q_+$, let $\mathcal O^\La_\alpha$ denote the 
category of all $\mathfrak{g}$-modules that are semisimple over $\mathfrak{h}$
and have a composition series with composition factors of the form $\cL(A)$
for $A \in \Col^\La_\alpha$.
When non-zero, $\mathcal O^\La_\alpha$ is known by 
\cite[Theorem 2]{cyclo} to be a single
block of the parabolic
analogue of the Bernstein-Gelfand-Gelfand category $\mathcal O$
associated to the standard parabolic subalgebra $\mathfrak{p}$ of 
$\mathfrak{g}$
with Levi factor
$\mathfrak{gl}_{n_1}(\C) \oplus \cdots \oplus \mathfrak{gl}_{n_l}(\C)$.
Hence
\begin{equation}\label{parc}
\mathcal O^\La := \bigoplus_{\alpha \in Q_+} \mathcal O^\La_\alpha
\end{equation}
is a sum of blocks of this parabolic category $\mathcal O$.
The category $\mathcal O^\La$ is a highest weight category
with irreducible objects $\{\cL(A)\:|\:A \in \Col^\La\}$,
standard objects $\{\cM(A)\:|\:A \in \Col^\La\}$ (which are
parabolic Verma modules), projective indecomposable modules
$\{\cP(A)\:|\:A \in \Col^\La\}$
and indecomposable tilting modules
$\{\cT(A)\:|\:A \in \Col^\La\}$.
The isomorphism classes of these four families of modules
give four natural bases for the Grothendieck group $[\mathcal O^\La]$
of the category $\mathcal O^\La$.

We also have the usual duality
$\circledast$ on $\mathcal O^\La$
defined with respect to the anti-automorphism
$*:\mathfrak{g} \rightarrow \mathfrak{g}$ mapping a matrix to its
transpose.
The irreducible modules are self-dual
in the sense that
\begin{equation}
\cL(A)^\circledast \cong \cL(A)
\end{equation}
for each $A \in \Col^\La$.

As for any highest weight category, the projective indecomposable module $\cP(A)$
has a filtration whose sections are standard modules, and the multiplicity
of the standard module $\cM(B)$ as a section of any 
such filtration, denoted $(\cP(A):\cM(B))$, is
equal to the composition multiplicity $[\cM(B):\cL(A)]$, 
i.e.
\begin{equation}\label{bggrec}
(\cP(A):\cM(B)) = [\cM(B):\cL(A)].
\end{equation}
This result
is usually referred to as {\em Bernstein-Gelfand-Gelfand 
reciprocity} after \cite{BGG}.

The module $\cT(A)$ is the unique (up to isomorphism) self-dual indecomposable
module in $\mathcal O^\La$ possessing a filtration by standard modules
in which $\cM(A)$ appears at the bottom.
There is a twisted version of BGG reciprocity for tilting modules
which describes the multiplicities of standard modules
in any standard filtration of $\cT(A)$.
We refer to this as {\em Arkhipov-Soergel reciprocity};
see $\S$\ref{sark} below for the detailed references.
To formulate the result, given $A \in \Col^\La$, let
$\widetilde{\cL}(A)$ 
denote the irreducible $\mathfrak{g}$-module 
of highest weight
$a_1 \eps_1 + (a_2+1) \eps_2+\cdots + (a_n+n-1) \eps_n$,
where $a_1,\dots,a_n$ is the reverse-column-reading
of the entries of $A$, i.e. the sequence obtained by reading
down columns starting with the rightmost column.
Let 
\begin{equation}
\widetilde{\mathcal O}^\La
:= \bigoplus_{\alpha \in Q_+} 
\widetilde{\mathcal O}^\La_\alpha
\end{equation}
where
$\widetilde{\mathcal O}^\La_\alpha$ is the 
category of all $\mathfrak{g}$-modules that are semisimple over $\mathfrak{h}$
and have a composition series with composition factors of the form $\widetilde{\cL}(A)$
for $A \in \Col^\La_\alpha$.
This is a single
block of the
parabolic category $\mathcal O$
associated to the standard parabolic subalgebra $\widetilde{\mathfrak{p}}$
with Levi factor
$\mathfrak{gl}_{n_l}(\C) \oplus \cdots \oplus \mathfrak{gl}_{n_1}(\C)$
(so $\widetilde{\mathfrak{p}}$ is 
conjugate to the opposite parabolic to $\mathfrak{p}$).
In the highest weight category $\widetilde{\mathcal O}^\La$, 
we have irreducible modules $\{\widetilde{\cL}(A)\:|\:A \in \Col^\La\}$,
standard modules $\{\widetilde{\cM}(A)\:|\:A \in \Col^\La\}$ 
and projective indecomposable modules
$\{\widetilde{\cP}(A)\:|\:A \in \Col^\La\}$.
Now Arkhipov-Soergel reciprocity is the assertion that
\begin{equation}\label{asr}
(\cT(A):\cM(B)) = [\widetilde{\cM}(B): \widetilde{\cL}(A)]
\end{equation}
for all $A, B \in \Col^\La$.

Returning to the discussion just of the category $\mathcal O^\La$,
for each $i \in I$ and $\alpha \in Q_+$
there are some much-studied {\em special projective functors}
(e.g. see \cite{BFK, BKtf, CR})
\begin{equation}\label{spf1}
\mathcal O_{\alpha}^\La
\quad\qquad
\mathcal O_{\alpha+\alpha_i}^\La
\begin{picture}(-10,19)
\put(-46,3){\makebox(0,0){$\stackrel{\stackrel{\;\scriptstyle{\cF_i}_{\phantom{,}}}{\displaystyle\longrightarrow}}{\stackrel{\displaystyle\longleftarrow}{\scriptstyle{\cE_i}}}$}}
\end{picture}
\end{equation}
\vspace{.1mm}

\noindent
defined as follows: $\cF_i$ is defined on
$\mathcal O^\La_{\alpha}$ 
by tensoring with the natural
$\mathfrak{g}$-module of column vectors
then projecting onto $\mathcal O^\La_{\alpha+\alpha_i}$;
 $\cE_i$ is defined on $\mathcal O^\La_{\alpha+\alpha_i}$
by tensoring with the dual of the natural module then projecting onto
$\mathcal O_\alpha^\La$.
Taking the direct sum of these functors over all $\alpha \in Q_+$,
we obtain endofunctors
$\cE_i$ and $\cF_i$ of $\mathcal O^\La$.
These functors $\cE_i$ and $\cF_i$ are biadjoint.
Hence they are both exact, so induce well-defined endomorphisms
of the Grothendieck group $[\mathcal O^\La]$.
We note also that the functors $\cE_i$ and $\cF_i$ commute with the
duality $\circledast$.

Now we can state the following foundational
categorification theorem.
This should be viewed as a translation of
the Kazhdan-Lusztig conjecture for $\mathfrak{g}$
into the language of canonical bases,
and is probably best described as ``folk-lore''
as it seems to have been independently (re-)discovered by 
many different people since the time of \cite{Lubook}.

\begin{Theorem}\label{cat1}
Identify the Grothendieck group
$[\mathcal O^\La]$ with the $U_\Z$-module
$F(\La)_\Z$ by identifying $[\cM(A)]$ with $\cM_A$
for each $A \in \Col^\La$.
\begin{itemize}
\item[(i)]
The following equalities hold for all $A \in \Col^\La$:
\begin{itemize}
\item[(a)]
$[\cL(A)] = \cL_A$;
\item[(b)]
$[\cP(A)] = \cP_A$;
\item[(c)]
$[\cT(A)] = \cT_A$.
\end{itemize}
\item[(ii)]
We have that
$\dim \hom_{\mathfrak{g}}(P, M) = \langle [P], [M] \rangle$
for each $P, M \in \mathcal O^\La$
with $P$ projective.
\item[(iii)]
The endomorphisms of the Grothendieck group
induced by the exact functors $\cE_i$ and $\cF_i$ from (\ref{spf1}) 
coincide with the
action of the Chevalley generators $\cE_i$ and $\cF_i$ of $U_\Z$
for each $i \in I$.
\item[(iv)] For $A \in \Col^\La$ and $i \in I$,
the module $\cE_i (\cL(A))$ (resp.\ $\cF_i (\cL(A))$) 
is non-zero if and only if
$\tilde e_i(A) \neq \frownie$
(resp.\ $\tilde f_i(A) \neq \frownie$), in which case it is
a self-dual indecomposable module
with irreducible socle and head isomorphic
to $\cL(\tilde e_i(A))$
(resp.\ $\cL(\tilde f_i(A))$).
\end{itemize}
\end{Theorem}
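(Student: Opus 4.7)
My plan is to establish the four parts in the order (iii), (i), (ii), (iv), with the first three being relatively formal consequences of the preparation in Section~2 and the Kazhdan-Lusztig conjecture, while (iv) is the main technical point. For (iii), I would compute $\cF_i \cM(A)$ directly: tensoring the parabolic Verma $\cM(A)$ with the natural $\mathfrak{g}$-module yields a module with standard filtration whose subquotients are Vermas obtained from $\cM(A)$ by incrementing a single column entry, and projection onto $\mathcal O^\La_{\alpha+\alpha_i}$ retains exactly those where an entry $i$ is replaced by $(i+1)$. At the Grothendieck level this reproduces the coproduct action of $F_i$ on the tensor product of fundamental representations defining $F(\La)$, which at $q=1$ is precisely the Chevalley generator action on $\cM_A$. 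The argument for $\cE_i$ is symmetric using the dual natural module. This identifies $[\mathcal O^\La]$ with $F(\La)_\Z$ as a $U_\Z$-module.

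Given (iii), part (i)(a) follows from the Kazhdan-Lusztig conjecture proved in \cite{BB, BrK}: the composition multiplicities $[\cM(B):\cL(A)]$ are values at $1$ of parabolic Kazhdan-Lusztig polynomials for $S_n$, which formula (\ref{scf}) identifies with $p_{A,B}(q)$ up to a power of $q$. Inverting via (\ref{t}) at $q=1$ gives $[\cL(A)] = \cL_A$. Part (i)(b) follows from BGG reciprocity (\ref{bggrec}) combined with (\ref{boo}) at $q=1$. For (i)(c) I would invoke Arkhipov-Soergel reciprocity (\ref{asr}) together with the Kazhdan-Lusztig conjecture applied to $\widetilde{\mathcal O}^\La$, rewriting $(\cT(A):\cM(B))$ as $\widetilde{d}_{A,B}(1)$ via (\ref{t2}), then verifying $\sum_B \widetilde{d}_{A,B}(q) M_B = T_A$ using the contravariant duality between the canonical basis of $F(\La)$ and the dual-canonical basis of $\widetilde{F}(\La)$; this is a bookkeeping identity requiring care with the reverse Bruhat order. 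Part (ii) is then formal: both sides are additive in $M$, so reduce to $M = \cL(A)$ and decompose $P$ into indecomposable projectives, matching the identity $\dim \hom(\cP(B), \cL(A)) = \delta_{A,B}$ with the specialization of (\ref{doe}) at $q=1$.

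The hardest part will be (iv). Lemma~\ref{dca} at $q=1$ records composition multiplicities of $\cE_i \cL(A)$ and $\cF_i \cL(A)$ but not the socle/head refinement. My approach combines biadjointness of $(\cE_i, \cF_i)$, their commutation with the duality $\circledast$, and a simultaneous induction that exploits Lemma~\ref{dca} to control the leading composition factor. Biadjointness converts $\hom(\cE_i \cL(A), \cL(B))$ into $\hom(\cL(A), \cF_i \cL(B))$ (and dually for $\cF_i$), which, once the socles of the target modules are known inductively, pins each Hom space to be $\delta_{B, \tilde e_i(A)}$ (respectively $\delta_{B, \tilde f_i(A)}$). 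This identifies the simple head, and self-duality of $\cE_i \cL(A)$ and $\cF_i \cL(A)$ (inherited from $\cL(A)^\circledast \cong \cL(A)$ via the commutation of the functors with $\circledast$) identifies the socle. Indecomposability then follows because any nontrivial direct summand would have to contain both the simple head and the simple socle. Setting up the induction so that the mutual dependencies between the assertions for $\cE_i$ and $\cF_i$ close up is the delicate point; one can run the induction on $\height(\alpha)$ in alternating directions for the two functors, or invoke the general machinery of categorical $\mathfrak{sl}_2$-actions.
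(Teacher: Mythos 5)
Your route differs from the paper's at exactly the points you might expect: the paper deals with (i)(a), (iii) and (iv) by citing \cite[Theorem 4.5]{rep} and does not reprove them, whereas you attempt them directly. For the remaining parts the approaches coincide: the paper, like you, derives (i)(b) from (i)(a) via BGG reciprocity (\ref{bggrec}) and (\ref{boo}) at $q=1$, derives (i)(c) via Arkhipov--Soergel reciprocity (\ref{asr}) together with the twisted analogue of (i)(a), citing \cite[Corollary~12]{dual} and \cite[(5.8)]{dual} for the bookkeeping identity $\sum_B \widetilde d_{A,B}(1)\cM_B = \cT_A$ (precisely the identity you flag as needing ``care with the reverse Bruhat order''), and treats (ii) as the same formal consequence of (i)(a), (i)(b), (\ref{doe}) at $q=1$ and the Schur-like identity $\dim\hom_{\mathfrak g}(\cP(A),\cL(B))=\delta_{A,B}$.

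Your direct sketches of (iii) and (i)(a) are sound in outline. For (iii) the calculation of $\cF_i\cM(A)$ via a Verma flag of $\cM(A)\otimes V_n$ followed by block projection, and the comparison with the coproduct action at $q=1$, is exactly the content that has to be verified; you should just note that column-strictness guarantees no subquotient appears twice, so multiplicities match. For (i)(a), identifying $[\cM(B):\cL(A)]$ with $d_{A,B}(1)$ goes through the parabolic Kazhdan--Lusztig conjecture for $\mathfrak{gl}_n$ and the combinatorial formula (\ref{scf}); this is fine but deserves an explicit statement that the parabolic version is what is needed, not just the Borel case proved in \cite{BB,BrK}.

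The real gap is in (iv). Your use of Lemma~\ref{dca} plus biadjointness plus commutation with $\circledast$ correctly shows that the only simple module that can appear in the head (or socle) of $\cE_i\cL(A)$ is $\cL(\tilde e_i(A))$: if $\cL(B)$ with $B\neq\tilde e_i(A)$ were a quotient, then $\eps_i(B)<\eps_i(A)-1$ forces, via $\phi_i - \eps_i = (\wt,\alpha_i)$, the inequality $\phi_i(B)\leq\phi_i(A)$, contradicting the requirement $\phi_i(A)<\phi_i(B)-1$ coming from $\cL(A)$ sitting in the socle of $\cF_i\cL(B)$. But this only bounds the head above by a direct sum of copies of $\cL(\tilde e_i(A))$; it does not by itself show the head is a \emph{single} copy (and hence does not give indecomposability). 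The usual way to close this step is by analyzing divided powers $\cE_i^{(r)}$ or by the general nil-affine Hecke / categorical $\mathfrak{sl}_2$ machinery, which is what the proof in \cite[Theorem 4.5]{rep} ultimately rests on (cf.\ also \cite{CR}). So as written your argument for (iv) establishes that the socle and head are isotypic of type $\cL(\tilde e_i(A))$, but not that they are simple. Either fill in the divided-power refinement or explicitly defer to the cited reference, as the paper does.
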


\begin{proof}
The assertions (i)(a), (iii) and (iv)
are verified in 
\cite[Theorem 4.5]{rep}.
Note (i)(a) implies that
$$
[\cM(B):\cL(A)] = d_{A,B}(1)
$$
where $d_{A,B}(1)$ is the polynomial from (\ref{t}) evaluated at $q=1$.
Using BGG reciprocity and (\ref{boo}) specialized at $q=1$ we therefore have that
$$
[\cP(A)] = 
\sum_{B \in \Col^\La} d_{A, B}(1) [\cM(B)]
=
\sum_{B \in \Col^\La} d_{A, B}(1) \cM_B
= \cP_A
$$
giving (i)(b).
There is also an analogue of (i)(a)
in the twisted setup proved in \cite[Theorem 4.5]{rep}, so we have that
$$
[\widetilde{\cM}(B): \widetilde{\cL}(A)]
= 
\widetilde{d}_{A,B}(1)
$$
where $\widetilde{d}_{A,B}(1)$ is the polynomial from (\ref{t2}) evaluated
at $q=1$. Combining this with Arkhipov-Soergel reciprocity,
\cite[Corollary 12]{dual} and the definition \cite[(5.8)]{dual}, it follows
that
$$
[\cT(A)] = 
\sum_{B \in \Col^\La} \widetilde{d}_{A,B}(1) [\cM(B)]
= 
\sum_{B \in \Col^\La} \widetilde{d}_{A,B}(1) \cM_B
= \cT_A,
$$
giving (i)(c).
Finally, (ii) is clear from (i)(a), (i)(b),
(\ref{doe}) specialized at $q=1$,
and the observation that
$\dim \hom_{\mathfrak{g}}(\cP(A), \cL(B)) = \delta_{A,B}$.
\end{proof}

In the rest of the section, we are going to
transfer this categorification theorem to 
degenerate cyclotomic Hecke algebras by applying
Schur-Weyl duality for higher levels.
We record two other useful lemmas about parabolic category $\mathcal O$,
the first of which originates in work of Irving. 
It explains the representation theoretic
significance of the subset $\Std^\La$ of $\Col^\La$.

\begin{Lemma}\label{pg0}
For $A \in \Col^\La$ the following are equivalent:
\begin{itemize}
\item[(i)] the projective indecomposable module $\cP(A)$ is 
injective
in $\mathcal O^\La$;
\item[(ii)] $\cP(A)$ is self-dual, i.e. $\cP(A)^\circledast \cong \cP(A)$;
\item[(iii)] $\cL(A)$ embeds into $\cM(B)$
for some $B \in \Col^\La$;
\item[(iv)] $A \in \Std^\La$.
\end{itemize}
\end{Lemma}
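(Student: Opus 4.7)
The plan is to close the cycle of implications (ii) $\Rightarrow$ (i) $\Rightarrow$ (iii) $\Rightarrow$ (iv) $\Rightarrow$ (ii).

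The equivalence (i) $\Leftrightarrow$ (ii) is a standard highest-weight-category fact: the duality $\circledast$ sends $\cP(A)$ to an indecomposable injective with socle $\cL(A)^\circledast \cong \cL(A)$, namely the injective hull of $\cL(A)$, so $\cP(A)$ is injective iff $\cP(A) \cong \cP(A)^\circledast$. The implication (i) $\Rightarrow$ (iii) is then immediate: under (i) the socle of $\cP(A)$ is $\cL(A)$, so the bottom piece $\cM(B)$ of any standard filtration of $\cP(A)$ has $\soc \cM(B) \subseteq \cL(A)$, which being non-zero forces $\cL(A) \hookrightarrow \cM(B)$.

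The substantive step will be (iv) $\Rightarrow$ (ii), which I would attack by identifying $\cP(A)$ with $\cT(A^\uparrow)$ for $A \in \Std^\La$. By Theorem~\ref{maincb} specialised at $q = 1$, $\cP_A = \cT_{A^\uparrow}$, and Theorem~\ref{cat1}(i)(b,c) upgrades this to an equality $[\cP(A)] = [\cT(A^\uparrow)]$ in the Grothendieck group. BGG reciprocity together with Corollary~\ref{yu} then show that the common $\Delta$-multiplicities are supported on $\{B : A \leq B \leq A^\uparrow\}$ with value~$1$ at both extremes, so $\cM(A)$ sits as the top quotient of a standard filtration of each module, yielding surjections $\cP(A) \twoheadrightarrow \cL(A)$ and $\cT(A^\uparrow) \twoheadrightarrow \cL(A)$. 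Lifting through projectivity gives a non-zero $\phi : \cP(A) \to \cT(A^\uparrow)$, which I would then argue is an isomorphism---for instance by computing $\dim \hom_{\mathfrak g}(\cP(A), \cT(A^\uparrow))$ via the pairing in Theorem~\ref{cat1}(ii) and matching it against $\dim \End(\cP(A))$, or by a direct structural analysis of the two modules---so that $\cP(A) \cong \cT(A^\uparrow)$ is self-dual.

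The remaining implication (iii) $\Rightarrow$ (iv) is the main obstacle. The most natural route uses Grothendieck-group bookkeeping: by Theorem~\ref{maincb} at $q = 1$, $V(\La)_\Z$ is the $\Z$-span of $\{\cP_A : A \in \Std^\La\}$ inside $F(\La)_\Z = [\mathcal O^\La]$, and since $\{\cP_A : A \in \Col^\La\}$ is already a basis of $F(\La)_\Z$ we have $[\cP(A)] \in V(\La)_\Z$ iff $A \in \Std^\La$. It would therefore suffice to deduce $[\cP(A)] \in V(\La)_\Z$ from $\cL(A) \hookrightarrow \cM(B)$, which I would attempt by dualising the embedding to $\nabla(B) \twoheadrightarrow \cL(A)$, lifting by projectivity to $\cP(A) \to \nabla(B)$, and pinning down the class of $\cP(A)$ from the image. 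The hard part will be that socle-membership in $\cM(B)$ is not a Grothendieck-class invariant, so additional categorical input---either from Schur--Weyl duality or from a direct socle analysis in parabolic Verma modules---seems necessary to close this final step.
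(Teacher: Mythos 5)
The paper does not give an internal proof of this lemma: it is outsourced entirely to \cite[Theorem 4.8]{schur}. Your proposal therefore necessarily takes a different route, and the real question is whether your self-contained argument closes. It does not, and both of the substantive steps have genuine gaps. For (iii)$\Rightarrow$(iv), you already acknowledge the problem: whether $\cL(A)$ sits in the socle of $\cM(B)$ is not visible from Grothendieck-group data, so the bookkeeping with $V(\La)_\Z$ cannot by itself establish the implication. This is not a technicality one can paper over---the underlying fact (Irving's description of the socular composition factors of parabolic Verma modules in type A, equivalently the indexing of prinjective indecomposables by standard tableaux) is precisely the categorical input that \cite{schur} supplies and that your argument is missing.

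The (iv)$\Rightarrow$(ii) step has a similar unacknowledged gap. You lift a surjection $\cT(A^\uparrow)\twoheadrightarrow\cL(A)$ through the projective $\cP(A)$ to get a non-zero $\phi:\cP(A)\to\cT(A^\uparrow)$, and propose to show $\phi$ is an isomorphism by comparing $\dim\hom_{\mathfrak g}(\cP(A),\cT(A^\uparrow))$ with $\dim\End(\cP(A))$. But both numbers equal $[\cT(A^\uparrow):\cL(A)]=[\cP(A):\cL(A)]$ (already known from $[\cP(A)]=[\cT(A^\uparrow)]$), and agreement of Hom-dimensions does not force any particular non-zero map to be an isomorphism. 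The surjectivity of $\phi$ would follow if $\cT(A^\uparrow)$ had irreducible head $\cL(A)$, but that is not established---indecomposable tilting modules in parabolic category $\mathcal O$ need not have irreducible head. Note also that the paper's own derivation of $\cP(A)\cong\cT(A^\uparrow)$ (Lemma~\ref{soc}) explicitly invokes Lemma~\ref{pg0}, so you genuinely do need an independent argument here, and the one sketched is not complete. In sum, the combinatorial/Grothendieck-group framework (Theorem~\ref{maincb}, Corollary~\ref{yu}, Theorem~\ref{cat1}) that you marshal is consistent with the statement but not strong enough to prove it without the additional module-theoretic facts about socles of parabolic Vermas and prinjectivity that the cited reference establishes directly.
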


\begin{proof}
See \cite[Theorem 4.8]{schur}.
\end{proof}

\begin{Lemma}\label{soc}
For $A \in \Std^\La$ the projective indecomposable module
$\cP(A)$ is isomorphic to the tilting module $\cT(A^\uparrow)$.
Moreover
the standard module $\cM(A^\uparrow)$ has irreducible socle
isomorphic to $\cL(A)$.
\end{Lemma}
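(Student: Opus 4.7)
The strategy is to prove the isomorphism $\cP(A) \cong \cT(A^\uparrow)$ first, and then read off the socle statement. To identify $\cP(A)$ with $\cT(A^\uparrow)$, I will verify the characterization of $\cT(A^\uparrow)$ recalled in $\S$\ref{spco}: that it is the unique self-dual indecomposable module in $\mathcal O^\La$ possessing a standard filtration with $\cM(A^\uparrow)$ at the bottom.

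The module $\cP(A)$ is indecomposable (as a projective cover) and self-dual by the equivalence (i)$\Leftrightarrow$(ii)$\Leftrightarrow$(iv) in Lemma~\ref{pg0}, and it has some standard filtration since it is projective in the highest weight category $\mathcal O^\La$. Combining BGG reciprocity~(\ref{bggrec}) with Theorem~\ref{cat1}(i)(a), the multiplicities in this filtration are
$$
(\cP(A):\cM(B)) = [\cM(B):\cL(A)] = d_{A,B}(1).
$$
By Corollary~\ref{yu} these multiplicities vanish unless $A \leq B \leq A^\uparrow$, and $(\cP(A):\cM(A^\uparrow))=1$ since $d_{A,A^\uparrow}(1)=1$. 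Because $A^\uparrow$ is the unique maximum index occurring, a standard structural fact for highest weight categories — that a $\Delta$ of maximal index in a $\Delta$-filtered module occurs as an honest submodule — supplies an embedding $\cM(A^\uparrow)\hookrightarrow\cP(A)$, and the filtration can be rearranged so that $\cM(A^\uparrow)$ is at the bottom. Hence $\cP(A)\cong\cT(A^\uparrow)$.

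Given the isomorphism, the socle statement is essentially immediate. Because $\cP(A)$ is the projective cover of $\cL(A)$ its head is $\cL(A)$, and the self-duality forces $\soc\cP(A) \cong \cL(A)^\circledast\cong \cL(A)$. The embedding $\cM(A^\uparrow) \hookrightarrow \cT(A^\uparrow) \cong \cP(A)$ then restricts to an embedding $\soc\cM(A^\uparrow) \hookrightarrow \cL(A)$; since $\cM(A^\uparrow) \neq 0$ has nonzero socle, we conclude $\soc\cM(A^\uparrow) \cong \cL(A)$, which is irreducible.

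The one slightly non-formal ingredient in this plan, and the place where I would expect to have to cite an outside reference, is the claim that a maximal $\Delta$ in a $\Delta$-filtered module appears as an actual submodule; everything else is a direct computation with the bases of $\S 2$ specialized at $q=1$ combined with Lemma~\ref{pg0} and Corollary~\ref{yu}.
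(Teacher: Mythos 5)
Your proposal is correct and follows essentially the same route as the paper: identify the top section $\cM(A^\uparrow)$ of a standard filtration of $\cP(A)$ via Corollary~\ref{yu}, invoke self-duality from Lemma~\ref{pg0} to match the characterization of $\cT(A^\uparrow)$, and read off the socle from the head by self-duality. The only cosmetic difference is that you derive $(\cP(A):\cM(B))=d_{A,B}(1)$ via BGG reciprocity from Theorem~\ref{cat1}(i)(a), whereas the paper reads it directly from Theorem~\ref{cat1}(i)(b) together with (\ref{boo}) (which is the same computation, since (i)(b) was itself deduced from (i)(a) by BGG reciprocity).
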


\begin{proof}
By Theorem~\ref{cat1}(i) and (\ref{boo}),
the multiplicity $(\cP(A):\cM(B))$
is equal to $d_{A, B}(1)$.
Hence if $A \in \Std^\La$ 
we deduce from Corollary~\ref{yu}
that $\cM(A^\uparrow)$ appears as a section of a standard
filtration of $\cP(A)$ with multiplicity one, 
and all other $\cM(B)$'s arising
satisfy $B < A^\uparrow$.
Since standard filtrations can always be ordered so that the most
dominant section appears at the bottom, this means that
$\cM(A^\uparrow)$ embeds into 
$\cP(A)$. As $\cP(A)$ is self-dual by Lemma~\ref{pg0}, 
we deduce that
$\cP(A) \cong \cT(A^\uparrow)$ by the definition of the latter module.
Moreover $\cP(A)$ has irreducible head $\cL(A)$, hence by
self-duality it also has irreducible socle isomorphic to $\cL(A)$.
Hence $\cM(A^\uparrow)$ must also have irreducible socle isomorphic to $\cL(A)$.
\end{proof}

\subsection{Degenerate cyclotomic Hecke algebras and blocks}\label{ssb}
In this subsection, we allow $I$ to be arbitrary;
from the Hecke algebra point of view the most important case is when $I = \Z$.

For $d \geq 0$, let $H_d$ be the degenerate affine Hecke algebra from \cite{D}.
As a vector space, $$
H_d = \C[x_1,\dots,x_d] \otimes \C S_d,
$$ 
the tensor product
of a polynomial ring in variables $x_1,\dots,x_d$ with the group algebra of the 
symmetric group. Multiplication is defined so that 
$\C[x_1,\dots,x_d] = \C[x_1,\dots,x_d] \otimes 1$ and $\C S_d = 1 \otimes \C S_d$
are subalgebras, and in addition
\begin{equation*}
s_r x_s = x_s s_r \text{ if $s \neq r,r+1$},
\quad
s_r x_{r+1} = x_r s_r + 1,
\quad
x_{r+1} s_r = s_r x_r + 1.
\end{equation*}
Here $s_r$ denotes the $r$th basic transposition.

Suppose we are given $l \geq 0$ and an $l$-tuple
$\bm = (m_1, \cdots, m_l)$ of integers.
Introduce the 
corresponding
{\em degenerate cyclotomic Hecke algebra} of level $l$:
\begin{equation}\label{meeting}
H_d^{\bm}
:= H_d / \langle (x_1-m_1) \cdots (x_1-m_l)\rangle.
\end{equation}
A basic result is that the vectors
\begin{equation}\label{bt}
\{x_1^{r_1} \cdots x_d^{r_d} w\:|\:0 \leq r_1,\dots,r_d < l, w \in S_d\}
\end{equation}
give a basis for $H_d^\bm$, hence 
$\dim H_d^\bm =l^d d!$; see e.g. \cite[Lemma 3.5]{schur}
or \cite[Theorem 7.5.6]{Kbook} for two possible proofs.
Any finite dimensional $H_d^\bm$-module $M$ decomposes into 
{\em weight spaces}
\begin{equation}\label{wtsp}
M = \bigoplus_{\bi \in \Z^d} M_\bi
\end{equation}
where $M_\bi := \{v \in M\:|\:(x_r-i_r)^N v = 0
\text{ for all $r=1,\dots,d$ and $N \gg 0$}\}$.
We have appealed here to a basic fact, namely, 
that the eigenvalues of the $x_r$'s are necessarily integers; see e.g. \cite[Lemma 7.1.2]{Kbook} for a proof.
The
{\em formal character} $\ch M$ of $M$ can then be defined as
\begin{equation}
\ch M := \sum_{\bi \in \Z^d} \dim M_\bi \cdot \bi,
\end{equation}
which is an element of the free $\Z$-module
with basis 
$\{\bi\:|\:\bi \in \Z^d\}$.
Two finite dimensional 
$H_d^\bm$-modules are equal in the Grothendieck group if and only if
their formal characters are equal;
see \cite[Theorem 5.3.1]{Kbook}.

Considering the decomposition (\ref{wtsp}) 
for the regular module, it follows that 
there is a system 
\begin{equation}
\{e(\bi)\:|\:\bi \in \Z^d\}
\end{equation}
of mutually orthogonal idempotents in $H_d^\bm$
such that $e(\bi) M = M_\bi$ for each finite dimensional module $M$.
In fact, each $e(\bi)$
lies in the commutative subalgebra
generated by $x_1,\dots,x_d$,
all but finitely many of the $e(\bi)$'s are necessarily zero,
and their sum is the identity element in $H_d^\bm$.
By \cite[Theorem 1]{cyclo},
the center $Z(H_d^\bm)$ consists of all symmetric polynomials
in $x_1,\dots,x_d$.
Hence, 
the sum of the $e(\bi)$'s for $\bi$ running over any $S_d$-orbit
in $\Z^d$ is either zero or it is a primitive 
central idempotent in $H_d^\bm$.

Now we pick a dominant weight $\La \in P_+$ of level $l$
and take $m_1,\dots,m_l$ to be the integers defined
by the decomposition (\ref{ms}).
Given $\alpha \in Q_+$, let $d := \height(\alpha)$
and let
\begin{equation}
e_\alpha := \sum_{\bi \in I^\alpha} e(\bi) \in H_d^\bm.
\end{equation}
As remarked at the end of the previous paragraph, the non-zero
$e_\alpha$'s are the primitive central idempotents in $H_d^\bm$. 
Define
\begin{equation}\label{fe}
H_\alpha^\La := e_\alpha H_d^\bm.
\end{equation}
The algebra $H^\La_\alpha$ is either zero or it is a single
{\em block} of $H_d^\bm$, and $\{e(\bi)\:|\:\bi \in I^\alpha\}$
is a system of mutually orthogonal idempotents 
summing to the identity in $H^\La_\alpha$.
Finally introduce the algebra
\begin{equation}\label{fe2}
H^\La := \bigoplus_{\alpha \in Q_+} H^\La_\alpha.
\end{equation}
For infinite $I$, this is a locally unital but not unital algebra,
as it is a direct sum of infinitely many
non-zero 
finite dimensional
algebras.
If $I = \Z$ then we have simply that
\begin{equation*}
H^\La = \bigoplus_{d \geq 0} H_d^\bm.
\end{equation*}
However if $I \neq \Z$  then equality does not hold here:
there always exist
blocks of $H_d^\bm$ for sufficiently large $d$
that cannot be realized 
as $H^\La_\alpha$'s for any $\alpha \in Q_+$ (we recall $Q_+$
depends implicitly on the index set $I$).

\subsection{Grothendieck groups, induction and restriction}\label{sgir}
Denote the category 
of finite dimensional 
(resp.\ finitely generated projective)
left 
$H^\La_\alpha$-modules
by $\rep{H^\La_\alpha}$
(resp.\ $\proj{H^\La_\alpha}$).
The Grothendieck group 
$[\rep{H^\La_\alpha}]$ 
(resp. $[\proj{H^\La_\alpha}]$) is the free abelian group with
basis given by the isomorphism classes of irreducible modules
(resp. projective indecomposable modules).
Sometimes it is convenient to work with all blocks at once:
 let $\rep{H^\La}$ (resp.\ $\proj{H^\La}$)
denote 
category of all finite dimensional
(resp.\ finitely generated projective)
locally unital left modules over the locally unital algebra
$H^\La$ from (\ref{fe2}).
The Grothendieck groups of these categories
then decompose into blocks as
\begin{align*}
[\rep{H^\La}] &= \bigoplus_{\alpha \in Q_+} [\rep{H^\La_\alpha}],\\
[\proj{H^\La}] &= \bigoplus_{\alpha \in Q_+} [\proj{H^\La_\alpha}].
\end{align*}
We have the {\em Cartan pairing}
\begin{equation}\label{cp}
\langle.,.\rangle:[\proj{H^\La}] \times
[\rep{H^\La}] \rightarrow \Z
\end{equation}
defined by setting
$\langle [P], [M] \rangle :=
\dim \hom_{H^{\La}} (P, M)$
for $P \in \proj{H^\La}$ and $M \in \rep{H^\La}$.
Of course different blocks are orthogonal under this pairing.

There is an obvious embedding
$H_d \hookrightarrow H_{d+1}$
which in view of (\ref{bt})
factors through the quotients to induce an embedding
$\iota:H_d^\bm \hookrightarrow H_{d+1}^\bm$.
This map has the property that
\begin{equation}\label{uprop}
\iota(e(i_1,\dots,i_d)) = \sum_{i \in \Z} e(i_1,\dots,i_d,i)
\end{equation}
for any $i_1,\dots,i_d \in \Z$.
Given $i \in I$ and $\alpha \in Q_+$ of height $d$,
we can compose $\iota$
on one side with the embedding
$H^\La_\alpha  \hookrightarrow H_d^\bm$
and on the other side with the projection
$H_{d+1}^\bm \twoheadrightarrow H^\La_{\alpha+\alpha_i}$
defined by multiplication by the 
block idempotent $e_{\alpha+\alpha_i}$, to obtain a non-unital
algebra homomorphism
\begin{equation}
\iota_{\alpha,\alpha_i}:
H^\La_\alpha \rightarrow
H^\La_{\alpha+\alpha_i}.
\end{equation}
By (\ref{uprop}), the map $\iota_{\alpha,\alpha_i}$
maps the identity element $e_{\alpha}$ of $H^\La_\alpha$
to the idempotent
\begin{equation}
e_{\alpha,\alpha_i} := \sum_{\bi \in I^{\alpha+\alpha_i}\
i_{d+1}=i} e(\bi) \in H^\La_{\alpha+\alpha_i}.
\end{equation}

To the homomorphism $\iota_{\alpha,\alpha_i}$, we associate the induction and restriction
functors
\begin{equation}\label{EF1}
\rep{H^\La_\alpha}
\quad\qquad
\rep{H^\La_{\alpha+\alpha_i}}.
\begin{picture}(-10,19)
\put(-78,3){\makebox(0,0){$\stackrel{\stackrel{\;\scriptstyle{\cF_i}_{\phantom{,}}}{\displaystyle\longrightarrow}}{\stackrel{\displaystyle\longleftarrow}{\scriptstyle{\cE_i}}}$}}
\end{picture}
\end{equation}
\vspace{.1mm}

\noindent
So on an object $M \in \rep{H^\La_{\alpha+\alpha_i}}$
we define
$\cE_i M$ to be the 
the vector space $e_{\alpha,\alpha_i} M$ 
viewed as an $H^\La_\alpha$-module
via the homomorphism $\iota_{\alpha,\alpha_i}$,
and on an object
$M \in \rep{H^\La_\alpha}$ we define
$\cF_i M$ to be $H^\La_{\alpha+\alpha_i} e_{\alpha,\alpha_i}
\otimes_{H^\La_\alpha} M$.
Taking the direct sum of these functors over all $\alpha \in Q_+$,
we obtain endofunctors
$\cE_i$ and $\cF_i$ of
$\rep{H^\La}$.
It is known that these functors $\cE_i$ and $\cF_i$ are biadjoint; 
see \cite[Lemma 8.2.2]{Kbook}.
Hence they induce endomorphisms of the Grothendieck groups
$[\rep{H^\La}]$ and $[\proj{H^\La}]$, and
these induced endomorphisms are biadjoint with respect to 
the {Cartan pairing}.

We note finally that $H_d^\bm$ possesses an anti-automorphism
$*$ with $s_r^* = s_r$ and $x_r^* = x_r$ for each $r$. 
This anti-automorphism leaves
each $H^\La_\alpha$ invariant so gives an anti-automorphism
\begin{equation}\label{Star}
*:H^\La_\alpha \rightarrow H^\La_\alpha
\end{equation}
for each $\alpha \in Q_+$.
Using this we can introduce a natural duality $\circledast$ on finite dimensional left 
$H_\alpha^\La$-modules: $M^\circledast$ denotes the dual space with left
action defined via the anti-automorphism $*$. This duality fixes the irreducible modules. The functors $\cE_i$ and $\cF_i$ commute with $\circledast$; see \cite[Lemma 8.2.2]{Kbook} again.

\subsection{Specht modules}
We want to define one important family of $H^\La_\alpha$-modules,
namely, the {\em Specht modules} 
\begin{equation}
\left\{\cS(A)\:\big|\:
A \in \Col^\La_\alpha\right\}
\end{equation}
following \cite[(6.20)]{schur}; this approach to defining Specht modules
by induction from level one goes back at least to Vazirani.
Given $d=d'+d''$, we have the obvious natural embedding of
$H_{d',d''} := H_{d'} \otimes H_{d''}$ into $H_d$.
If $M'$ is an $H_{d'}$-module and $M''$ is an $H_{d''}$-module, we 
define
\begin{equation}\label{ip}
M' \circ M'' := H_d \otimes_{H_{d',d''}} (M' \boxtimes M''),
\end{equation}
where $M' \boxtimes M''$ denotes the outer tensor product.

For any partition $\rpar$ of $d$ let $\cS(\rpar)$ denote the corresponding
irreducible $\C S_d$-module.
For any $m \in \C$ there is an evaluation homomorphism
\begin{equation}\label{evh}
\ev_m:H_d \twoheadrightarrow \C S_d
\end{equation}
mapping each $s_r$ to $s_r$ and
$x_1$ to $m$. Let $\ev_m^*(\cS(\rpar))$ denote the lift of $\cS(\rpar)$
to an $H_d$-module via this homomorphism $\ev_m$.
Now suppose that $A \in \Col^\La_\alpha$ and let
$\bpar(A) = (\rpar^{(1)},\dots,\rpar^{(l)})$ be the corresponding
$\La$-multipartition according to the bijection (\ref{im}).
Then we define
\begin{equation}\label{spcht}
\cS(A) := 
\ev_{m_1}^* (\cS(\rpar^{(1)})) \circ \cdots \circ \ev_{m_l}^* (\cS(\rpar^{(l)})).
\end{equation}
As written, this is a finite dimensional left $H_d$-module, but 
it is not hard to check that $(x_1-m_1) \cdots (x_1-m_l)$ acts as zero, 
hence $\cS(A)$ 
can naturally be viewed as an $H_d^\bm$-module.
A straightforward character calculation (see e.g. \cite[p.247]{cyclo})
shows moreover that 
$\cS(A)$ belongs to the block $e_{\alpha}$, i.e.
$e_{\alpha} \cS(A) = \cS(A)$. Hence
$\cS(A)$ is a unital $H^\La_\alpha$-module.

\begin{Remark}\rm
The Specht module $\cS(A)$ just defined
is the same as the cell module $\cS(\bpar)$
that is defined in \cite[$\S$6]{AMR}
for the multipartition $\bpar := \bpar(A)$.
This is the degenerate analogue
of the Specht module of
\cite[(3.28)]{DJM}.
One could also choose to work everywhere in terms of the modules
\begin{equation}\label{dualspecht}
\widetilde{\cS}(A) :=
\ev_{m_l}^*(\cS(\rpar^{(l)}))
\circ \cdots \circ \ev_{m_1}^*(\cS(\rpar^{(1)})),
\end{equation}
using the same notation as (\ref{spcht}).
Note by \cite[Corollary 3.7.5]{Kbook} that
\begin{equation}\label{Aredual}
\widetilde{\cS}(A) \cong \cS(A)^\circledast.
\end{equation}
In particular, this means that
$\cS(A)$ and $\widetilde{\cS}(A)$ have the same formal characters,
so are equal in the Grothendieck group.
\end{Remark}

\subsection{\boldmath Schur-Weyl duality for level $l$}\label{sswd}
In this subsection we assume once more that $I$ is bounded-below.
We are going to recall 
the Schur-Weyl duality from \cite{schur}
which links
the representation theory of the algebra $H^\La$ from (\ref{fe2})
to  the parabolic category $\mathcal O^\La$ 
from (\ref{parc}).
Let $\cT_d$ denote the $d$th tensor power of the natural
$\mathfrak{g}$-module of column vectors.
For any $\mathfrak{g}$-module $M$, 
there is a natural right action of 
the degenerate affine Hecke algebra $H_d$
on $M \otimes \cT_d$ commuting with the left action of 
$\mathfrak{g}$.
This action is defined by letting
elements of $S_d$ act by permuting tensors in $\cT_d$
like in classical Schur-Weyl duality.
The remaining 
generator $x_1$ acts as 
$\Omega \otimes 1^{\otimes (d-1)}$,
where $\Omega := \sum_{i,j=1}^{n} e_{i,j}\otimes e_{j,i} \in \mathfrak{g}
\otimes \mathfrak{g}$ (the ``trace form'').

Now recall the ground-state tableau $A^\La$ from 
$\S$\ref{sscomb}. 
It is the only column-strict $\La$-tableau of weight $\La$,
hence $\cP(A^\La)\cong \cM(A^\La) \cong \cL(A^\La)$.
This module plays a very special role:
according to Theorem~\ref{cat1}
its isomorphism class 
in the Grothendieck group
$[\mathcal O^\La]$ coincides with the highest weight vector
$v_\La\in F(\La)_\Z$.
For $\alpha \in Q_+$ with $\height(\alpha) = d$, we define
$\cT^\La_\alpha$ to be the projection of
the tensor product $\cP(A^\La) \otimes \cT_d$ onto the
block $\mathcal O^\La_\alpha$.
Equivalently, recalling the 
functor $\cF_i$ from (\ref{spf1}), we have that
\begin{equation}
\cT_\alpha^\La = \bigoplus_{\bi \in I^\alpha} 
\cF_{i_d} \cdots \cF_{i_1}( \cP(A^\La) ).
\end{equation}
Then set
\begin{equation}\label{all}
\cT^\La := \bigoplus_{\alpha \in Q_+} \cT^\La_\alpha.
\end{equation}
By a {\em prinjective module} we mean a module that is both 
projective and injective.
By Lemma~\ref{pg0}, we know that 
the modules $\{\cP(A)\:|\:A \in \Std^\La\}$
give a full set of representatives for the isomorphism classes
of
prinjective indecomposable modules
in the category $\mathcal O^\La$.

\begin{Lemma}\label{pg}
The module $\cT^\La$ is a prinjective generator for $\mathcal O^\La$,
i.e. it is a direct sum of
copies of the prinjective indecomposable modules
$\{\cP(A)\:|\:A \in \Std^\La\}$, and each of these
modules appears as a summand of $\cT^\La$ with multiplicity at least one.
\end{Lemma}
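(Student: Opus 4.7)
The plan is to verify the two assertions of the lemma separately: first that $\cT^\La$ is a direct sum of prinjective indecomposable modules, and then that each $\cP(A)$ with $A \in \Std^\La$ occurs as a summand at least once.

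For prinjectivity, I would start from the observation that the ground-state tableau $A^\La$ lies in $\Std^\La$ (its columns are strictly increasing and all row entries agree with the row index), so Lemma~\ref{pg0} tells us that $\cP(A^\La) = \cL(A^\La) = \cM(A^\La)$ is simultaneously projective and injective. The projective functors $\cF_i$ from (\ref{spf1}) are exact and biadjoint to $\cE_i$: as left adjoints of exact functors they preserve projectives, and as right adjoints they preserve injectives. Iterating, every module of the form $\cF_{i_d}\cdots\cF_{i_1}\cP(A^\La)$ is prinjective, and therefore so is $\cT^\La$. A further appeal to Lemma~\ref{pg0} then forces $\cT^\La$ to decompose into summands of the form $\cP(A)$ for $A \in \Std^\La$ only.

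For the multiplicity count, I would fix $A \in \Std^\La$ and exploit the crystal structure. Since $\Std^\La$ is by definition the connected component of $A^\La$ in the crystal graph, there is a sequence $(i_1,\dots,i_d) \in I^d$ such that $A = \tilde f_{i_d}\cdots \tilde f_{i_1}(A^\La)$, with all intermediate tableaux $A_k := \tilde f_{i_k}\cdots \tilde f_{i_1}(A^\La)$ well defined. By Theorem~\ref{cat1}(iv), each $\cF_{i_k}\cL(A_{k-1})$ has simple head $\cL(A_k)$, so using exactness of the $\cF_{i_k}$ to push surjections forward step by step, I would build a chain of surjections culminating in
$$\cF_{i_d}\cdots\cF_{i_1}\cL(A^\La) \twoheadrightarrow \cL(A).$$
But the source of this map is a direct summand of $\cT^\La_\alpha$ for the relevant $\alpha \in Q_+$ and is projective; having $\cL(A)$ in its head therefore forces $\cP(A)$ to appear as a summand of $\cT^\La$, as required.

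I do not anticipate any serious obstacle here: the first step is formal once one has Lemma~\ref{pg0} together with the biadjointness of $\cE_i$ and $\cF_i$, and the second step reduces to the categorified crystal action already packaged in Theorem~\ref{cat1}(iv). The only mildly delicate bookkeeping is checking that the functors $\cF_i$ really do preserve prinjectivity in $\mathcal O^\La$, but this follows at once from the general fact that a left adjoint to an exact functor preserves projectives while a right adjoint to an exact functor preserves injectives.
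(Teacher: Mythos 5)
Your proof is correct. For the prinjectivity half it takes essentially the paper's route: the paper observes directly that tensoring with the finite-dimensional module $\cT_d$, followed by the (exact) block projection, preserves projectives and injectives, so $\cP(A^\La)\otimes\cT_d$ and hence all its summands are prinjective; your rephrasing via the biadjointness of $\cF_i$ to the exact functor $\cE_i$ is equivalent. For the multiplicity half, the paper simply cites \cite[Corollary 4.6]{schur}, whereas you give a self-contained argument using the categorified crystal structure from Theorem~\ref{cat1}(iv); this unfolds essentially the proof behind the cited corollary, trading a reference for a short direct argument. One small imprecision: the identity $\cP(A^\La)=\cL(A^\La)=\cM(A^\La)$ is not a consequence of Lemma~\ref{pg0}, which only tells you that $\cP(A^\La)$ is prinjective. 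The identity instead holds because $A^\La$ is the unique column-strict tableau of weight $\La$, so its block has a single simple object and the Verma module $\cM(A^\La)$ is already irreducible and projective. You do need this identity to recognize $\cF_{i_d}\cdots\cF_{i_1}\cL(A^\La)$ as a projective direct summand of $\cT^\La_\alpha$, so it deserves its own sentence rather than being folded into the appeal to Lemma~\ref{pg0}.
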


\begin{proof}
We note that $\cP(A^\La)$
is prinjective, and tensoring with finite dimensional
modules preserves projective and injective modules.
Hence each $\cP(A^\La) \otimes \cT_d$ is prinjective, so all their
summands are too.
This shows that all indecomposable summands of $\cT^\La$
are of the form $\cP(A)$ for some $A \in \Std^\La$.
The fact that each of these appears in $\cT^\La$
with multiplicity
at least one is established in \cite[Corollary 4.6]{schur}.
\end{proof}

Now we discuss the 
endomorphism algebra of $\cT^\La_\alpha$ for $\alpha \in Q_+$.
Letting $d := \height(\alpha)$,
\cite[Corollary 5.12]{schur} shows that
the natural right action of $H_d$ on $\cP(A^\La) \otimes \cT_d$ defined above
factors through the quotient to make $\cP(A^\La) \otimes \cT_d$
into a well-defined right $H_d^\bm$-module.
Moreover by \cite[Lemma 5.8]{cyclo} 
the canonical $\mathfrak{g}$-equivariant projection
of $\cP(A^\La) \otimes \cT_d$ onto the summand $\cT_\alpha^\La$
coincides with the action of
the central idempotent
$e_{\alpha} \in H_d^\bm$.
Hence $\cT_\alpha^\La$ is naturally a $(U(\mathfrak{g}), H_\alpha^\La)$-bimodule.
The following theorem should be viewed as a slight 
reformulation of the main result of
Schur-Weyl duality for level $l$ from \cite{schur};
it was formulated on a block-by-block basis like this already
in \cite[Theorem 5.9]{cyclo}.

\begin{Theorem}\label{swd0}
The natural right action of $H_\alpha^\La$ on $\cT^\La_\alpha$ induces an algebra isomorphism
between 
$H^\La_\alpha$ and
$\End_{\mathfrak{g}}(\cT^\La_\alpha)^{\op}$.
\end{Theorem}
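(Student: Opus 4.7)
The plan is to verify that the map $\Theta: H^\La_\alpha \to \End_\mathfrak{g}(\cT^\La_\alpha)^{\op}$ induced by the right action (shown to be well-defined in the paragraph preceding the theorem) is both injective and surjective.

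For injectivity, the strategy is to exhibit a faithful $H^\La_\alpha$-module that can be read off from $\cT^\La_\alpha$ via a Schur-type functor. The natural candidate is the direct sum of Specht modules $\bigoplus_{A \in \Col^\La_\alpha} \cS(A)$, which is faithful by the cellularity of $H^\La_\alpha$ established in \cite[$\S$6]{AMR}. Using the inductive construction (\ref{spcht}) of the Specht modules from level-one evaluation modules, together with the biadjunctions between the functors $\cF_i$ and $\cE_i$ inherent in the decomposition $\cT^\La_\alpha = \bigoplus_{\bi\in I^\alpha} \cF_{i_d}\cdots\cF_{i_1}\cP(A^\La)$ from (\ref{all}), each $\cS(A)$ for $A \in \Col^\La_\alpha$ should arise as a subquotient of the $H^\La_\alpha$-module $\hom_\mathfrak{g}(\cP(A^\La), \cT^\La_\alpha)$, forcing the right action to be faithful.

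For surjectivity, I would match dimensions. On the endomorphism side, projectivity of $\cT^\La_\alpha$ combined with Theorem \ref{cat1}(ii) gives
$$
\dim \End_\mathfrak{g}(\cT^\La_\alpha) = \langle [\cT^\La_\alpha], [\cT^\La_\alpha]\rangle.
$$
Theorem \ref{cat1}(i)(b) and (iii), together with $[\cP(A^\La)] = \cP_{A^\La} = v_\La$ (since $A^\La$ is the unique column-strict tableau of weight $\La$), unpack the Grothendieck class as
$$
[\cT^\La_\alpha] = \sum_{\bi \in I^\alpha} \cF_{i_d}\cdots \cF_{i_1} v_\La,
$$
so expanding the pairing via the defining property (\ref{shap}) of $\tau$ reduces it to a combinatorial sum indexed by pairs in $I^\alpha \times I^\alpha$. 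On the Hecke side, the PBW basis (\ref{bt}) combined with the block decomposition (\ref{fe}) expresses $\dim H^\La_\alpha$ as a parallel sum; comparing the two counts should yield the required equality, which together with injectivity forces $\Theta$ to be an isomorphism.

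The main obstacle is the explicit combinatorial matching of those two sums, which is the substantive content of Schur-Weyl duality for higher levels as developed in \cite{schur}. An alternative, more economical route is to view the theorem as a repackaging of the block-by-block statement already proved in \cite[Theorem 5.9]{cyclo}, so that the remaining work is just the translation between the multipartition language used there and the column-strict $\La$-tableau language set up in Section 2 via the bijection (\ref{im}).
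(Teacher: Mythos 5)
The paper's own proof is a citation: it derives the theorem from \cite[Theorem 5.13]{schur}, \cite[Corollary 6.7]{schur} and \cite[Lemma 5.8]{cyclo}, and the paragraph before the theorem states explicitly that this is a reformulation of the main result of \cite{schur}, already packaged block-by-block in \cite[Theorem 5.9]{cyclo}. Your closing ``more economical route'' is therefore exactly what the paper does, and as a complete answer that sentence would suffice. Your attempted direct proof, however, has two concrete gaps.

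First, the injectivity step does not stand as written. You propose to realize the Specht modules $\cS(A)$ for $A \in \Col^\La_\alpha$ as subquotients of $\hom_{\mathfrak{g}}(\cP(A^\La), \cT^\La_\alpha)$, but this Hom space vanishes whenever $\alpha \neq 0$: the module $\cP(A^\La)$ lies in the block $\mathcal O^\La_0$ while $\cT^\La_\alpha$ lies in $\mathcal O^\La_\alpha$, and there are no nonzero $\mathfrak g$-homomorphisms between different blocks. (The genuine Schur functor here is $\pi = \hom_{\mathfrak g}(\cT^\La_\alpha, ?)$; applied to $\cT^\La_\alpha$ it returns $\End_{\mathfrak g}(\cT^\La_\alpha)$ as a right module over itself, so invoking it is circular for the purpose of proving faithfulness.) Second, the assertion that $\bigoplus_{A\in\Col^\La_\alpha} \cS(A)$ is faithful ``by cellularity'' is not a general fact about cellular algebras --- for $\C[x]/(x^2)$ with its obvious cell chain, both cell modules are trivial and their direct sum is not faithful --- so it needs a genuine argument specific to $H^\La_\alpha$ (e.g. that it is symmetric with a particular cell structure), which you do not supply. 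The dimension count for surjectivity is plausible in outline but is precisely the hard computation carried out in \cite{schur}; as you say, it is the substantive content of Schur--Weyl duality for higher levels, and it is not reproduced here. In short: your fallback citation is correct and matches the paper, but the ``direct'' proof sketched above would fail at the injectivity step and has an unjustified faithfulness claim.
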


\begin{proof}
This is a consequence of \cite[Theorem 5.13]{schur},
\cite[Corollary 6.7]{schur}
and the 
description of the idempotent $e_{\alpha}$
from \cite[Lemma 5.8]{cyclo} mentioned just before the statement of the theorem.
\end{proof}

Recalling (\ref{all}) and (\ref{fe2}), we have that
$\cT^\La$ is a $(U(\mathfrak{g}), H^\La)$-bimodule,
and Theorem~\ref{swd0} allows us to identify
\begin{equation}\label{dc}
H^\La = \End_{\mathfrak{g}}(\cT^\La)^{\op}.
\end{equation}
Now introduce the {\em Schur functors}
\begin{align}\label{sf1}
\pi :=
 \hom_{\mathfrak{g}}(\cT^\La, ?)&:
\mathcal O^\La
\rightarrow \rep{H^\La},\\
\pi^*:=
\cT^\La \otimes_{H^\La} ?&:
\rep{H^\La}\rightarrow \mathcal O^\La.\label{sf2}
\end{align}
Because of (\ref{dc}), 
the functor $\pi$ is a
{\em quotient functor} in the general sense of \cite[$\S$III.1]{Gab}, 
so we are in a well understood situation.
In particular, the functor $\pi$ is exact
and $\pi^*$ is left adjoint to $\pi$, hence $\pi^*$
sends projectives to projectives.
The following theorem collates all the other important known facts
about these functors that we will need later on.

\begin{Theorem}\label{swd}
For $A \in \Col^\La$,
$\pi(\cL(A))$ is non-zero if and only if $A \in \Std^\La$.
Moreover:
\begin{itemize}
\item[(i)]
The modules $\{\pi(\cL(A))\:|\:A \in \Std^\La\}$
give a complete set of pairwise inequivalent irreducible $H^\La$-modules.
\item[(ii)]
For $A \in \Std^\La$, the module $\pi(\cP(A))$ is the projective cover of
$\pi (\cL(A))$, and $\pi^*(\pi(\cP(A))) \cong \cP(A)$.
\item[(iii)] For $A \in \Col^\La$,
we have that
$\pi (\cM(A)) \cong \cS(A)$.
\item[(iv)] 
There is an
isomorphism of functors 
$\cF_i \circ \pi \cong \pi \circ \cF_i$,
where $\cF_i$ on the left hand side comes from (\ref{EF1})
and $\cF_i$ on the right hand side comes from (\ref{spf1}).
\item[(v)]
For any $M, N \in \mathcal O^\La$ such that 
all composition factors of the head of $M$ and the socle of $N$
are of the form $\cL(A)$ for $A \in \Std^\La$, the functor
$\pi$ defines a vector space isomorphism
$$
\hom_{\mathfrak{g}}(M, N) \stackrel{\sim}{\rightarrow}
\hom_{H^\La}(\pi(M), \pi(N)).
$$
\item[(vi)] 
The functor $\pi$ is fully faithful both on projectives
and on tilting modules.
\end{itemize}
\end{Theorem}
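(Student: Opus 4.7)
My plan is to derive the whole theorem from two pillars: the identification $H^\La \cong \End_{\mathfrak g}(\cT^\La)^{\op}$ (Theorem~\ref{swd0}) and the fact that $\cT^\La$ is a prinjective generator (Lemma~\ref{pg}). Together these make $\pi = \hom_{\mathfrak g}(\cT^\La, ?)$ a Gabriel quotient functor with right adjoint $\pi^{*} = \cT^\La \otimes_{H^\La} ?$, and the bulk of the statements become formal consequences of this framework.

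Parts (i), (ii), and the opening non-vanishing claim would come first. For any simple $\cL(A)$, $\pi(\cL(A))$ is either zero or irreducible, and is non-zero exactly when $\cL(A)$ appears in the head of some indecomposable summand of $\cT^\La$; by Lemma~\ref{pg} this occurs if and only if $A \in \Std^\La$. Since the regular module $H^\La = \pi(\cT^\La)$ surjects onto every irreducible $H^\La$-module, this also yields (i). For (ii), choose an idempotent $e_A \in H^\La$ realizing the projection of $\cT^\La$ onto a chosen $\cP(A)$-summand; then $\pi(\cP(A)) = e_A H^\La$ is indecomposable projective with head $\pi(\cL(A))$, and $\pi^{*}\pi(\cP(A)) = \cT^\La \otimes_{H^\La} e_A H^\La \cong \cT^\La e_A \cong \cP(A)$.

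Next I would prove (iv) before (iii), since I plan to use (iv) in the proof of (iii). The isomorphism $\pi \circ \cF_i \cong \cF_i \circ \pi$ expresses a compatibility of the bimodule $\cT^\La$ with both sides' versions of $\cF_i$: tensoring $\cT^\La_\alpha$ with one more copy of the natural module and projecting onto block $\alpha+\alpha_i$ produces $\cT^\La_{\alpha+\alpha_i} e_{\alpha,\alpha_i}$ as a $(U(\mathfrak g), H^\La_\alpha)$-bimodule, and applying $\hom_{\mathfrak g}(\cT^\La_{\alpha+\alpha_i}, ?)$ turns this construction into induction along $\iota_{\alpha,\alpha_i}$. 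For (iii) I would then induct on $\height(\alpha)$: the base case $\pi(\cM(A^\La)) = \pi(\cL(A^\La)) \cong \C = \cS(A^\La)$ is immediate, and in the inductive step both $\cM(A)$ (via the standard construction from $\cP(A^\La) = \cM(A^\La)$ by successive applications of $\cF_i$ corresponding to reading the entries of $A$) and $\cS(A)$ (via the induction formula (\ref{spcht}), which matches the same sequence of $\cF_i$'s after reducing to level one through the tensor-product structure) are built by the same iterated $\cF_i$; the intertwining in (iv) together with a formal character comparison and cyclicity of both modules then forces an actual isomorphism, not just equality in the Grothendieck group.

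Finally (v) is a standard quotient-functor argument: the head condition on $M$ forces the unit $M \to \pi^{*}\pi M$ to have kernel and cokernel in $\ker \pi$, the socle condition on $N$ forces the counit $\pi^{*}\pi N \to N$ to be injective, and combining these with the adjunction $\hom(\pi^{*}\pi M, N) \cong \hom(\pi M, \pi N)$ gives the desired bijection on Homs. For (vi), the case of prinjective projectives follows directly from (v) because for $A \in \Std^\La$ the module $\cP(A)$ is self-dual by Lemma~\ref{pg0}(ii), so both its head and socle are $\cL(A)$; via Lemma~\ref{soc}, this already handles the tilting modules $\cT(B)$ with $B = A^\uparrow$, and a general tilting $\cT(B)$ is reduced to this case by combining Arkhipov--Soergel reciprocity (\ref{asr}) with the analogous result for projectives in the twisted category $\widetilde{\mathcal O}^\La$ to show that all composition factors of the head and socle of $\cT(B)$ lie in $\Std^\La$. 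The main obstacles are (iii), where upgrading the character-level identification to a genuine module isomorphism requires a careful interplay of functoriality and cyclicity, and the tilting case of (vi), where controlling the head and socle of an arbitrary $\cT(B)$ is more delicate than for projectives and is what forces one to pass through the Ringel-dual picture.
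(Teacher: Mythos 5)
Your framework is the right one and matches the paper's: combine the isomorphism $H^\La \cong \End_{\mathfrak g}(\cT^\La)^{\op}$ with the fact that $\cT^\La$ is a prinjective generator, then exploit the general theory of quotient functors. The arguments you give for the opening non-vanishing claim, (i), (ii), and (v) are correct and essentially what the paper points to (it cites general facts about quotient functors from \cite{GL} where you argue directly). But there are two genuine gaps.

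First, your proposed proof of (iii) does not close. The module $\cM(A)$ is not a direct summand of an iterated $\cF_i$ applied to $\cM(A^\La)$: applying $\cF_i$ to a parabolic Verma module produces an object with a filtration whose sections are several parabolic Verma modules, so $\cM(A)$ only arises as a subquotient. Likewise the Specht module $\cS(A)$ is defined by a single large induction from $H_{d_1}\otimes\cdots\otimes H_{d_l}$, not by iterating $\cF_i$, so there is no parallel ``same sequence of $\cF_i$'s'' on the two sides. Finally, two cyclic modules with the same formal character need not be isomorphic, so ``character comparison plus cyclicity'' cannot upgrade equality in the Grothendieck group to an actual isomorphism. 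The paper cites \cite[Theorem 6.12]{schur} for (iii); this is a nontrivial external input. Second, in (vi) your deduction of full faithfulness on projectives from (v) only covers the prinjective ones $\cP(A)$ with $A\in\Std^\La$. For $A\notin\Std^\La$ the head of $\cP(A)$ is $\cL(A)$ with $A\notin\Std^\La$, so the hypothesis of (v) on the head of $M$ fails. The paper's \cite[Theorem 6.10]{schur} (which it cites for this) circumvents this by exhibiting a two-step resolution $0\to \cP(A)\to I_1\to I_2$ with $I_1,I_2$ prinjective. For the tilting half of (vi) your route through Arkhipov--Soergel reciprocity and the Ringel dual is sound but unnecessarily heavy: the paper observes directly that a tilting module is self-dual and has a standard filtration, so by Lemma~\ref{pg0}(iii) all composition factors of both its socle and head are indexed by $\Std^\La$, after which (v) applies immediately.
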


\begin{proof}
The first statement is clear from Lemma~\ref{pg}.
Given that, (i) and (v) are general facts about quotient functors;
see \cite[Corollary 3.1e]{GL}
and \cite[Corollary 3.1c]{GL}, respectively.
Also (iii) is \cite[Theorem 6.12]{schur},
(iv) follows from \cite[Lemma 5.16]{schur}.

For (ii), 
let $e \in \End_{\mathfrak{g}}(\cT^\La)^{\op}$ be an idempotent
such that $(\cT^\La) e \cong \cP(A)$;
such idempotents exist by Lemma~\ref{pg}.
By (\ref{dc}) and Fitting's lemma, $e$ is a primitive idempotent
in $H^\La$, hence $(H^\La) e$ is a projective indecomposable module.
Clearly this is isomorphic to $\pi (\cP(A))$,
which maps surjectively onto $\pi(\cL(A))$.
Hence $\pi(\cP(A))$ is the projective cover of $\pi(\cL(A))$.
It remains to show that $$
\pi^*(\pi(\cP(A))) \cong \cP(A).
$$
Since $\cP(A)$ has a standard filtration,
Lemma~\ref{pg0} implies that all constituents of the socle of
$\cP(A)$ are of the form $\cL(B)$ for $B \in \Std^\La$.
Using this, \cite[Theorem 3.1d]{GL} and 
\cite[Lemma 3.1f]{GL} we get that
$\pi^* (\pi(\cP(A))) \cong \pi^* ((H^\La)e)
\cong (\cT^\La)e \cong \cP(A)$.

Finally, for (vi), the fact that $\pi$ is fully faithful
on projectives is \cite[Theorem 6.10]{schur}.
The fact that it is fully faithful on tilting modules
is a consequence of (v), since all composition factors of 
the socle and head of any tilting module
are of the form $\cL(A)$ for $A \in \Std^\La$.
For the latter statement, tilting modules are self-dual, so
it suffices to verify it for the socle.
Now use the fact that tilting modules 
have a filtration by standard modules, and all composition factors of 
the socle
of any standard module are of the form $\cL(A)$ for $A \in \Std^\La$
according to Lemma~\ref{pg0}(iii).
\end{proof}

\subsection{The main categorification theorem}\label{stmr}
Now we use Schur-Weyl duality
to deduce the degenerate analogue of Ariki's categorification theorem
from Theorem~\ref{cat1},
working now with an arbitrary index set $I$.
The first job is to recover the classification of the
irreducible $H^\La$-modules. The same strategy was
noted already in \cite[Theorem 6.15]{schur}.

\begin{Theorem}\label{class}
For $A \in \Std^\La$, the Specht module
$\cS(A)$ has irreducible head denoted $\cD(A)$.
The modules
$\{\cD(A)\:|\:A \in \Std^\La\}$
give a full set of pairwise inequivalent irreducible $H^\La$-modules.
\end{Theorem}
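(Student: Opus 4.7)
The plan is to deduce everything from Theorem~\ref{swd} by transporting the canonical surjection $\cM(A) \twoheadrightarrow \cL(A)$ in $\mathcal O^\La$ through the Schur functor $\pi$. Since Theorem~\ref{swd} is stated only when $I$ is bounded-below, I would first reduce to that case. For each fixed $\alpha \in Q_+$ the block $H^\La_\alpha$ depends only on $\bm$ and $\alpha$ (its definition in (\ref{fe}) refers only to those data), and any $A \in \Std^\La_\alpha$ differs from the ground-state tableau $A^\La$ in only finitely many rows, so for $k$ small enough that all non-default entries of $A$ are $\geq k$ the map $A \mapsto A \sqcup (k-1)^l \sqcup \cdots$ gives a bijection between $\Std^\La_\alpha$ computed in $I_{\geq k}$ and in $I$. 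Moreover by (\ref{spcht}) the Specht module $\cS(A)$ depends only on $\bpar(A)$, so it is insensitive to this change of index set. Handling each block $H^\La_\alpha$ separately, it therefore suffices to prove the theorem when $I$ is bounded-below, so that Schur--Weyl duality is available.

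Assume $I$ is bounded-below and fix $A \in \Std^\La$. The functor $\pi$ is exact, being a quotient functor in Gabriel's sense, and $\pi(\cM(A)) \cong \cS(A)$ by Theorem~\ref{swd}(iii); applying $\pi$ to $\cM(A) \twoheadrightarrow \cL(A)$ therefore produces a surjection $\cS(A) \twoheadrightarrow \pi(\cL(A))$, and the right hand side is a non-zero irreducible $H^\La$-module by the first assertion of Theorem~\ref{swd} combined with part (i). The key step is to show the head of $\cS(A)$ is exhausted by this one irreducible. For this I would apply Theorem~\ref{swd}(v) with $M = \cM(A)$, whose head $\cL(A)$ is indexed by $A \in \Std^\La$, and $N = \cL(B)$ for an arbitrary $B \in \Std^\La$, whose socle $\cL(B)$ is likewise indexed by an element of $\Std^\La$. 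The hypothesis of (v) holds, so
$$
\hom_{H^\La}\bigl(\cS(A), \pi(\cL(B))\bigr) \;\cong\; \hom_{\mathfrak{g}}\bigl(\cM(A), \cL(B)\bigr) \;=\; \delta_{A,B}\,\C,
$$
since $\cM(A)$ has simple head $\cL(A)$. As every irreducible $H^\La$-module is isomorphic to some $\pi(\cL(B))$ with $B \in \Std^\La$ by Theorem~\ref{swd}(i), this computation forces the head of $\cS(A)$ to be the irreducible module $\pi(\cL(A))$, appearing with multiplicity one; I would then name this head $\cD(A) := \pi(\cL(A))$.

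The final claim that $\{\cD(A) : A \in \Std^\La\}$ is a complete set of pairwise inequivalent irreducible $H^\La$-modules is then an immediate restatement of Theorem~\ref{swd}(i) via the identification $\cD(A) = \pi(\cL(A))$. The only genuine obstacle in the argument is the identification of the head in the middle step; this rests entirely on the ``fully faithful on Hom'' property in Theorem~\ref{swd}(v), which in turn packages the fact that the prinjective part of $\mathcal O^\La$ captures everything that the Schur functor can see. Everything else is either a routine truncation to the bounded-below setting or a formal consequence of $\pi$ being an exact quotient functor.
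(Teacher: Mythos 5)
Your proof is correct and takes essentially the same route as the paper: reduce to the bounded-below case by noting that each block $H^\La_\alpha$ and the relevant Specht modules are insensitive to enlarging $I$ downward, then apply Theorem~\ref{swd} (parts (i), (iii) and (v)) to transport the surjection $\cM(A)\twoheadrightarrow\cL(A)$ and the Hom computation through the Schur functor. The only cosmetic difference is that the paper handles the bounded-below case first and the truncation afterward, whereas you do the reverse.
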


\begin{proof}
Suppose first that $I$ is bounded-below and let $\pi$
be the Schur functor from (\ref{sf1}).
By Theorem~\ref{swd}(i) the modules
$\{\pi(\cL(A))\:|\:A \in \Std^\La\}$ give a complete set of irreducible
modules. Moreover by Theorem~\ref{swd}(iii)
we know that $\pi(\cM(A)) \cong \cS(A)$.
Therefore we just need to show that
$\pi(\cM(A))$ has irreducible head isomorphic to $\pi(\cL(A))$.
This follows by Theorem~\ref{swd}(v) and the fact that $\cM(A)$
has irreducible head isomorphic to $\cL(A)$:
for any $B \in \Std^\La$
$$
\hom_{H^\La}(\pi(\cM(A)), \pi(\cL(B)))
\cong
\hom_{\mathfrak{g}}(\cM(A), \cL(B))
$$
which is one dimensional if $B = A$ and zero otherwise.

Now assume that $I$ is not bounded-below.
It is enough to show for each $\alpha \in Q_+$ 
that 
$\cS(A)$ has irreducible head denoted $D(A)$
for each $A \in \Std^\La_\alpha$, and that
the irreducible modules
$\{\cD(A)\:|\:A \in \Std^\La_\alpha\}$ give a complete set
of irreducible $H^\La_\alpha$-modules.
Given $\alpha$, we pick $k$ such that $\alpha$
is a sum of simple roots of the form $\alpha_i$ for $i \geq k$.
Then the block $H^\La_\alpha$ already appears as a block of
the algebra $H^\La$ 
defined with respect to the bounded-below
index set $I_{\geq k}$. Moreover for $A \in \Std^\La_\alpha$,
it is clear from the definition that
the Specht module
$\cS(A)$ is the same module regardless of whether we are considering
$I$ or $I_{\geq k}$.
Hence we are done by the previous paragraph.
\end{proof}

\begin{Remark}\rm\label{ap}
There is another natural way to parametrize irreducible modules in terms of the set
$\RStd^\La$ of reverse-standard
$\La$-tableaux from $\S$\ref{stv}.
To see this alternative parametrization, note
for $A \in \RStd^\La$ (and bounded-below $I$)
that
the standard module $\cM(A)$ 
has irreducible socle
isomorphic to $\cL(A^{\downarrow})$;
see Lemma~\ref{soc} and (\ref{rect2}).
Using this and arguing exactly like in the proof of Theorem~\ref{class},
it follows (for arbitrary $I$) that 
the Specht module $\cS(A)$ has irreducible socle 
denoted $\widetilde{D}(A)$ for each $A \in \RStd^\La$.
Moreover
\begin{equation}\label{trans}
\widetilde{D}(A) \cong D(A^{\downarrow}).
\end{equation}
Applying the natural duality and recalling (\ref{dualspecht})--(\ref{Aredual}), 
we deduce that the  
module $\widetilde{S}(A) \cong S(A)^\circledast$
has irreducible head $\widetilde{D}(A)$
for each $A \in \RStd^\La$, and the irreducible modules
$\{\widetilde{D}(A)\:|\:A \in \RStd^\La\}$
give a complete set of pairwise inequivalent irreducible $H^\La$-modules.
\end{Remark}

For each $A \in \Std^\La$, 
let $\cY(A)$ denote the projective cover of $\cD(A)$;
 if $I$ is bounded-below then we can take $\cY(A) := \pi(\cP(A))$
according to Theorem~\ref{swd}(ii).
Then
$\{[\cD(A)]\:|\:A \in \Std^\La\}$
and
$\{[\cY(A)]\:|\:A \in \Std^\La\}$
give canonical bases
for the Grothendieck groups
$[\rep{H^\La}]$ and $[\proj{H^\La}]$, respectively.
These two bases  are dual to each other under the Cartan pairing
from (\ref{cp}).
In the statement of the following theorem, the index set $I$ is arbitrary.

\begin{Theorem}\label{cat2}
Identify $[\rep{H^\La}]$ (resp.\ $[\proj{H^\La}]$)
with the costandard form $V(\La)_\Z^*$ 
(resp.\ the standard form $V(\La)_\Z$) 
for the irreducible $\mathfrak{gl}_{I_+}(\C)$-module
 of highest weight $\La \in P_+$
by identifying $[\cD(A)]$ with 
the dual-canonical basis vector $\cD_A$ 
(resp.\ $[\cY(A)]$ with the canonical basis vector $\cP_A$)
for each $A \in \Std^\La$.
\begin{itemize}
\item[(i)]
The map $[\proj{H^\La}] \rightarrow [\rep{H^\La}]$
induced by the natural embedding of $\proj{H^\La}$
as a full subcategory $\rep{H^\La}$ is identified with the inclusion
$V(\La)_\Z \hookrightarrow V(\La)_\Z^*$. In particular, this
map is injective.
\item[(ii)] We have that $[\cS(A)]=\cS_A$ 
for each $A \in \Col^\La$.
In particular, the isomorphism classes
$\{[\cS(A)]\:|\:A \in \Std^\La\}$ give a basis for $[\rep{H^\La}]$
which coincides with the standard monomial basis
for $V(\La)_\Z^*$.
\item[(iii)]
For each $B \in \Col^\La$ we have that
$$
[\cS(B)] = \sum_{A \in \Std^\La} d_{A,B}(1) [\cD(A)]
$$
where $d_{A,B}(1)$ is the
coefficient of $\cM_B$
when the canonical basis element $\cP_A$ is expanded 
in terms of the monomial basis 
of $F(\La)_\Z$; see (\ref{t}).
\item[(iv)] The Cartan pairing from (\ref{cp})
is identified with the Shapovalov pairing
$\langle.,.\rangle:V(\La)_\Z \times V(\La)_\Z^* \rightarrow \Z$.
\item[(v)] The endomorphisms 
of $[\rep{H^\La}]$ and $[\proj{H^\La}]$
induced by the
exact functors $\cE_i$ and $\cF_i$ from (\ref{EF1})
coincide with the actions of the Chevalley generators of $U_\Z$.
\item[(vi)] For $A \in \Std^\La$ and $i \in I$, the module
$\cE_i (\cD(A))$ (resp.\ $\cF_i (\cD(A))$) is non-zero if and only if
$\tilde e_i(A) \neq \frownie$ (resp.\ $\tilde f_i(A) \neq \frownie$),
in whch case it is a self-dual indecomposable module
with irreducible socle and head
isomorphic to $\cD(\tilde e_i(A))$ (resp.\ $\cD(\tilde f_i(A))$).
\end{itemize}
\end{Theorem}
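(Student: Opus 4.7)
The plan is to transport Theorem~\ref{cat1} from parabolic category $\mathcal O^\La$ to $\rep{H^\La}$ via the Schur functor $\pi$ of (\ref{sf1}), handling the bounded-below case first and then reducing the general case by direct limits.

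Assume first that $I$ is bounded-below. Since $\pi$ is exact, it induces a $\Z$-linear map $\pi_*:[\mathcal O^\La]\to[\rep{H^\La}]$. Under the identification $[\mathcal O^\La]=F(\La)_\Z$ of Theorem~\ref{cat1}, the classes $\cL_A$ with $A\notin\Std^\La$ lie in $\ker\pi_*$ by Theorem~\ref{swd}, so Corollary~\ref{kerb} implies that $\pi_*$ factors through the canonical surjection $F(\La)_\Z\twoheadrightarrow V(\La)_\Z^*$ to an isomorphism $V(\La)_\Z^*\stackrel{\sim}{\to}[\rep{H^\La}]$, which is the source of the identification $[\cD(A)]\leftrightarrow\cD_A$. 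Since $\pi(\cM(A))\cong\cS(A)$ by Theorem~\ref{swd}(iii), we obtain $[\cS(A)]=\pi_*(\cM_A)=\cS_A$, proving (ii); part (iii) is then (\ref{sind}) at $q=1$. For (i), $\cY(A)\cong\pi(\cP(A))$ by Theorem~\ref{swd}(ii), so the class of $\cY(A)$ in $[\rep{H^\La}]$ corresponds to $\pi_*(\cP_A)=\cP_A$; identifying $[\cY(A)]\leftrightarrow\cP_A\in V(\La)_\Z$ therefore makes the forgetful map on Grothendieck groups match the inclusion $V(\La)_\Z\hookrightarrow V(\La)_\Z^*$. Part (iv) then reduces to the coincidence of two pairings on dual bases: $\dim\hom_{H^\La}(\cY(A),\cD(B))=\delta_{A,B}$ because $\cY(A)$ is the projective cover of $\cD(A)$, while $\langle\cP_A,\cD_B\rangle=\delta_{A,B}$ by the last formula of Theorem~\ref{maincb}.

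For (v), $\pi\circ\cF_i\cong\cF_i\circ\pi$ (Theorem~\ref{swd}(iv)) together with Theorem~\ref{cat1}(iii) shows that $\cF_i$ on $[\rep{H^\La}]$ acts as the Chevalley generator $f_i$; the analogous statement on $[\proj{H^\La}]$ follows from (i), since the forgetful map is the inclusion $V(\La)_\Z\hookrightarrow V(\La)_\Z^*$ and commutes with $\cF_i$ by exactness. The statements for $\cE_i$ are deduced by biadjointness of $(\cE_i,\cF_i)$ combined with the identification of the Cartan and Shapovalov pairings in (iv), together with the fact that $e_i$ and $f_i$ are Shapovalov-adjoint (an immediate consequence of (\ref{shap}) and (\ref{taudef}) at $q=1$). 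For (vi), note that $A\in\Std^\La$ implies $\tilde f_i(A)\in\Std^\La\sqcup\{\frownie\}$ since $\Std^\La$ is the connected component of the crystal containing $A^\La$; applying $\pi$ to Theorem~\ref{cat1}(iv) and using Theorem~\ref{swd}(iv)--(v) shows that $\cF_i\cD(A)=\pi(\cF_i\cL(A))$ is nonzero exactly when $\tilde f_i(A)\neq\frownie$ and has irreducible head $\cD(\tilde f_i(A))$ (compute $\hom_{H^\La}(\pi(\cF_i\cL(A)),\cD(B))$ via Theorem~\ref{swd}(v)), with self-duality preserved because $\pi$ commutes with $\circledast$; the argument for $\cE_i$ is identical.

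To handle arbitrary $I$, each $\alpha\in Q_+$ involves only finitely many simple roots, so the block $H^\La_\alpha$ and the functors $\cE_i,\cF_i$ incident to it already appear over some bounded-below truncation $I_{\geq k}$; moreover $\cS(A),\cD(A),\cY(A)$ and all structural data stabilize once $k$ is small enough. The bounded-below identifications at each $\alpha$ assemble into global identifications using (\ref{dstab}) for the dual-canonical basis and an analogous stability statement for the quasi-canonical basis, which follows from Theorem~\ref{maincb} together with the stability of Lusztig's canonical basis under $\iota_k$ (itself a consequence of Lemma~\ref{stable} and the defining characterization of $T_A$). The main obstacle throughout is the bookkeeping behind part (i): the identifications of $[\rep{H^\La}]$ with $V(\La)_\Z^*$ and of $[\proj{H^\La}]$ with $V(\La)_\Z$ are defined separately, and the fact that together they make the forgetful map equal to the canonical inclusion is guaranteed precisely because both are realized by the single functor $\pi$ in the bounded-below case.
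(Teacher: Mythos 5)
Your proposal is correct and follows essentially the same route as the paper: reduce to the bounded-below case, transport Theorem~\ref{cat1} across the Schur functors $\pi, \pi^*$ using Theorem~\ref{swd}, and assemble the arbitrary-$I$ case by direct limits. The paper packages the core of (i)--(iii) as a single commutative diagram whose outer edges are the maps you build separately, but the content is the same. One small point worth adjusting: in part (vi) you deduce self-duality of $\cF_i\cD(A)$ from the fact that $\pi$ commutes with $\circledast$, but that fact is Theorem~\ref{duality}, which appears later in the paper (Section 4) and requires substantially more machinery (symmetric algebra structure, the bilinear form on $\cT^\La_\alpha$, two-step prinjective resolutions); it is cleaner and logically tidier to instead use the elementary fact recorded at the end of $\S$\ref{sgir} that $\cE_i,\cF_i$ on $\rep{H^\La}$ commute with $\circledast$, together with the self-duality of $\cD(A)$, which is what the paper does.
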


\begin{proof}
It suffices to prove the theorem in the special case that $I$
is bounded-below.
Assuming that is the case from now on, 
we first claim that the following diagram of $\Z$-linear
maps commutes:
$$
\begin{CD}
[\proj{H^\La}]&@>\pi^*>>&
[\mathcal O^\La]&@>\pi>>&[\rep{H^\La}]\\
@VVV&&@VVV&&@VVV\\
V(\La)_\Z&@>>\pi^*>&
F(\La)_\Z&@>>\pi>&V(\La)_\Z^*.
\end{CD}
$$
Here the top maps $\pi$ and $\pi^*$ 
are induced by the Schur functors
from (\ref{sf1})--(\ref{sf2}).
The bottom $\pi^*$ is the natural inclusion and the bottom
$\pi$ is induced
by the projection from (\ref{pie}). 
The middle  
vertical map is the identification from Theorem~\ref{cat1}
which sends $[\cL(A)]$ to $\cL_A$
and $[\cP(A)]$ to $\cP_A$ for each $A \in \Col^\La$. 
The left (resp.\ right) hand vertical 
hand map is the
identification from the statement of this theorem which maps
$[\cY(A)]$ to $\cP_A$ (resp.\ $[\cD(A)]$ to $\cD_A$) for $A \in \Std^\La$. 
To check that the right hand square commutes, 
it suffices to check it commutes on
$[\cL(A)]$ for each $A \in \Col^\La$. This follows 
because
the top map sends $[\cL(A)]$ to $[\cD(A)]$ if
$A \in \Std^\La$ and to zero otherwise by Theorem~\ref{swd},
while
the bottom map sends $\cL_A$ to $\cD_A$ if $A \in \Std^\La$
and to zero otherwise by Theorem~\ref{las}.
To check the left hand square commutes we
check it on $[\cY(A)]$ for each $A \in \Std^\La$;
this follows because by Theorem~\ref{swd}(ii) we have that 
$[\pi^* (\cY(A))] = [\cP(A)]$.
This establishes the claim.

The composite map along the top of the above diagram
maps $[\cY(A)]$ to $[\cY(A)]$ by Theorem~\ref{swd}(ii).
Hence it is exactly the map induced by the embedding of
$\proj{H^\La}$ into $\rep{H^\La}$.
The composite map along the bottom of the above diagram is the
natural inclusion of $V(\La)_\Z$ into $V(\La)^*_\Z$.
This establishes (i).
Next we compute the image of $[\cM(A)]$ for $A \in \Col^\La$ 
going both ways around the right hand square in the above diagram.
The top map $\pi$ sends $[\cM(A)]$ 
to $[\cS(A)]$ by Theorem~\ref{swd}(iii),
while the left and bottom maps send $[\cM(A)]$ to $\cM_A$ then to $\cS_A$.
Hence $[\cS(A)]$ is identified with $\cS_A$.
Recalling the standard monomial basis theorem from (\ref{smb})
this proves (ii). Also
(iii) now follows from (\ref{sind}) and (\ref{boo}).
For (iv) we just note that
$\{[\cY(A)]\:|\:A \in \Std^\La\}$ and
$\{[\cD(A)\:|\:A \in \Std^\La\}$ are dual bases under the Cartan pairing.
So we are done immediately as
$\{\cP_A\:|\:A \in \Std^\La\}$ and
$\{\cD_A\:|\:A \in \Std^\La\}$ are dual bases under the
Shapovalov form according to Theorem~\ref{maincb}.
Now (v) for $\cF_i$ follows from Theorem~\ref{cat1}(iii)
and Theorem~\ref{swd}(iv).
We then get it for $\cE_i$ too using (iv) and adjointness.

Finally for (vi), it is enough to check it in the case of $\cF_i$,
since it then follows easily for $\cE_i$ too by an
adjointness argument.
Since $\cF_i$ commutes with duality, we know at once that
$\cF_i \cD(A)$ is self-dual.
It remains to compute its head. So we need to compute
$\hom_{H^\La} (\cF_i \cD(A), \cD(B))$
for $A, B \in \Std^\La$.
By Theorem~\ref{swd}(v),
$\cF_i \cD(A) \cong \pi (\cF_i \cL(A))$.
The modules $M := \cF_i \cL(A)$ and $N := \cL(B)$
satisfy the conditions from Theorem~\ref{swd}(vi)
thanks to Theorem~\ref{cat1}(vi).
Hence we get that
$$
\hom_{H^\La} (\cF_i \cD(A), \cD(B))
\cong \hom_{\mathfrak{g}}(\cF_i \cL(A), \cL(B)).
$$
By Theorem~\ref{cat1}(vi) this is zero unless
$\tilde f_i(A) = B$, when it is one dimensional.
This implies (vi).
\end{proof}

\begin{Remark}\rm
We stress that only the proof not the statement of
Theorem~\ref{cat2} should be regarded as new.
For example, as we said in the introduction, 
many parts of the theorem 
can be deduced from the generic case of Ariki's original theorem 
from \cite{Ari} using \cite[Corollary 2]{young} and \cite[$\S$6]{AMR}.
Some aspects of the theorem also appear in Grojnowski's
paper \cite{Groj}; see \cite[Theorems 9.5.1, 10.3.5]{Kbook}.
More general results than (ii) and (vi) have been proved in
the root of unity case by Jacon in \cite{Jac}
and Ariki in \cite{Abranch}, respectively.
\end{Remark}

\subsection{``Tensoring with sign''}
In this subsection we assume that $I = \Z$.
There is an obvious $\Z$-linear 
map $t:P \rightarrow P$
mapping $\La_i$ to $\La_{-i}$
and $\alpha_i$ to $\alpha_{-i}$ for each $i \in \Z$.
We let
\begin{equation}
\La^t := \La_{-m_l}+\cdots+\La_{-m_1}
\end{equation}
denote the image of $\La$ under this map; note that
$-m_l \geq \cdots \geq -m_1$.

We are interested in the {\em sign automorphism}
\begin{equation}
\sigma:H_d \rightarrow H_d, \qquad
s_r \mapsto -s_r,\:x_r \mapsto -x_r.
\end{equation}
For an $H_d$-module $M$, we write $\sigma^*(M)$ for the $H_d$-module
obtained from $M$ by twisting the action by this automorphism $\sigma$.
Recalling (\ref{evh}), we note for any $\C S_d$-module $M$ that
\begin{equation}\label{sign}
\sigma^*(\ev_m^*(M)) \cong \ev_{-m}^*(M \otimes \operatorname{sgn})
\end{equation}
where $M \otimes \operatorname{sgn}$ denotes the $\C S_d$-module obtained by 
tensoring $M$ with the one dimensional sign representation.
Moreover,
\begin{align}
\sigma^*(M^{\circledast}) &\cong \sigma^*(M)^\circledast,\label{sign1}\\
\sigma^*(M \circ N) &\cong \sigma^*(M) \circ \sigma^*(N),\label{sign2}
\end{align}
where $\circ$ is the induction product from (\ref{ip}).

Suppose also that we are given $\alpha \in Q_+$ of height $d$.
Recalling (\ref{meeting}), let 
\begin{equation}
H_d^{\bm^t}
:= H_d / \langle (x_1+m_l) \cdots (x_1+m_1)\rangle.
\end{equation}
Notice that the automorphism 
$\sigma$ factors through the quotients to induce 
an isomorphism
$H_d^\bm \stackrel{\sim}{\rightarrow} H_d^{\bm^t}$.
This isomorphism maps the idempotent
$e(i_1,\dots,i_d)$ to the idempotent
$e(-i_1,\dots,-i_d)$, hence it sends the block idempotent
$e_{\alpha}$ to $e_{\alpha^t}$.
This establishes that $\sigma$ induces an isomorphism
\begin{equation}\label{sigmalocal}
\sigma:H^\La_\alpha \stackrel{\sim}{\rightarrow} H^{\La^t}_{\alpha^t}.
\end{equation}
If $M$ is an $H^{\La^t}_{\al^t}$-module
then $\sigma^*(M)$ is naturally an $H^\La_\alpha$-module.

We define the {\em transpose} $\la^t$ of an $l$-multipartition
$\la = (\la^{(1)},\dots,\la^{(l)})$ to be the $l$-multipartition
$((\la^{(l)})^t,\dots,(\la^{(1)})^t)$,
where $(\la^{(r)})^t$ is the usual transpose of the partition
$\la^{(r)}$.
There is a bijection
\begin{equation}\label{bij}
\Col^\La_\alpha \stackrel{\sim}{\rightarrow} \Col^{\La^t}_{\alpha^t},
\qquad
A \mapsto A^t
\end{equation}
where for  a $\La$-tableau $A$, we write $A^t$ for 
the unique $\La^t$-tableau
that satsifies
$\la(A^t) = \la(A)^t$, recalling the notation (\ref{im}).
We stress that this depends implicitly on the fixed choice
of $\La$.

\begin{Lemma}\label{bij2}
The bijection (\ref{bij}) restricts to a bijection
$\RStd^\La_\alpha\stackrel{\sim}{\rightarrow} \Std^{\La^t}_{\alpha^t}$.
\end{Lemma}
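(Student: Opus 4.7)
The plan is to translate both membership conditions into explicit inequalities on the associated $l$-multipartitions via the bijection (\ref{im}), and then reduce to an elementary identity for transposes of partitions. Write $\mu = \la(A) = (\mu^{(1)},\dots,\mu^{(l)})$, so that by definition $\la(A^t) = \mu^t = ((\mu^{(l)})^t,\dots,(\mu^{(1)})^t)$; the multicharge for $\La^t$ is $(m'_1,\dots,m'_l) = (-m_l,\dots,-m_1)$. Using (\ref{reg}), the condition $A \in \RStd^\La_\alpha$ unwinds to
\begin{equation*}
\mu^{(r)}_j + (m_r - m_{r+1}) \geq \mu^{(r+1)}_j \qquad (r=1,\dots,l-1,\ j \geq 1),
\end{equation*}
while using (\ref{res}) for the multicharge $(m'_1,\dots,m'_l)$, the condition $A^t \in \Std^{\La^t}_{\alpha^t}$ unwinds to
\begin{equation*}
(\mu^t)^{(i)}_{j+m'_i-m'_{i+1}} \leq (\mu^t)^{(i+1)}_j \qquad (i=1,\dots,l-1,\ j \geq 1).
\end{equation*}
Setting $r := l - i$ and noting $m'_i - m'_{i+1} = m_r - m_{r+1}$, $(\mu^t)^{(i)} = (\mu^{(r+1)})^t$ and $(\mu^t)^{(i+1)} = (\mu^{(r)})^t$, this second condition becomes
\begin{equation*}
(\mu^{(r+1)})^t_{j+(m_r-m_{r+1})} \leq (\mu^{(r)})^t_j \qquad (r=1,\dots,l-1,\ j \geq 1).
\end{equation*}

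The heart of the argument is then the following elementary observation about ordinary partitions, which I would establish as a short separate lemma: for any two partitions $\la, \nu$ and any integer $c \geq 0$,
\begin{equation*}
\la_j + c \geq \nu_j \text{ for all } j \geq 1 \quad\iff\quad \nu^t_{j+c} \leq \la^t_j \text{ for all } j \geq 1.
\end{equation*}
This is immediate from the standard equivalence $\la^t_k \geq i \iff \la_i \geq k$: after substituting $k = j+c$, the right-hand side asserts that $\nu_i \geq k \Rightarrow \la_i \geq k - c$ for every $i \geq 1$ and every $k \geq c+1$, which collapses to $\la_i + c \geq \nu_i$ for all $i$ (the constraint being automatic when $\nu_i \leq c$).

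Applying this identity term-by-term with $c = m_r - m_{r+1}$, which is non-negative since $m_1 \geq \cdots \geq m_l$, shows that the two displayed conditions on $\mu$ and $\mu^t$ are equivalent, which finishes the proof. The only real obstacle is keeping the index book-keeping straight: one must carefully line up the reversal of components induced by $\mu \mapsto \mu^t$ against the reversed multicharge in $\La^t$, so that the shift $m_r - m_{r+1}$ on one side matches the corresponding shift $m'_i - m'_{i+1}$ on the other side after reindexing.
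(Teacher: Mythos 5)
Your proof is correct and follows essentially the same route as the paper: unwind the tableau conditions via (\ref{res})--(\ref{reg}) and the multicharge reversal, reindex with $r = l-i$, and reduce to the combinatorial fact that $\la_j + c \geq \nu_j$ for all $j$ iff $\nu^t_{j+c} \leq \la^t_j$ for all $j$ (when $c \geq 0$). The only difference is that the paper dismisses this last step as ``an easy combinatorial exercise,'' whereas you spell it out correctly using the equivalence $\la^t_k \geq i \iff \la_i \geq k$.
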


\begin{proof}
Using (\ref{res})--(\ref{reg}) and the definitions, this reduces to checking
for partitions $\la$ and $\mu$ 
and an integer $m \geq 0$ that
$\la_j+m \geq \mu_j$ for all $j \geq 1$ if and only if
$\mu_{j+m}^t \leq \lambda_j^t$ for all $j \geq 1$.
This is an easy combinatorial exercise.
\end{proof}

\begin{Lemma}\label{ssig}
For $A \in \Col^\La_\alpha$,
we have that $\sigma^*(\cS(A^t)) \cong \widetilde{\cS}(A)
\cong \cS(A)^\circledast$.
\end{Lemma}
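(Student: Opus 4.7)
The plan is to directly unpack the definition of $\cS(A^t)$, push the sign twist $\sigma^*$ through the induction product, and recognise the result as $\widetilde{\cS}(A)$. The second isomorphism $\widetilde{\cS}(A)\cong \cS(A)^\circledast$ is already recorded in (\ref{Aredual}), so the content is entirely in the first isomorphism.

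First, I would unwind the definition of $A^t$. If $\la(A)=(\rpar^{(1)},\dots,\rpar^{(l)})$, then by construction $\la(A^t)=((\rpar^{(l)})^t,\dots,(\rpar^{(1)})^t)$, and $A^t$ is a $\La^t$-tableau where the level-one weights for $\La^t$ are $-m_l\geq\cdots\geq -m_1$. Substituting into (\ref{spcht}) gives
$$
\cS(A^t)=\ev_{-m_l}^*(\cS((\rpar^{(l)})^t))\circ\cdots\circ \ev_{-m_1}^*(\cS((\rpar^{(1)})^t)).
$$

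Next I would apply $\sigma^*$. By (\ref{sign2}) the twist commutes with the induction product, so
$$
\sigma^*(\cS(A^t))\cong \sigma^*\bigl(\ev_{-m_l}^*(\cS((\rpar^{(l)})^t))\bigr)\circ\cdots\circ \sigma^*\bigl(\ev_{-m_1}^*(\cS((\rpar^{(1)})^t))\bigr).
$$
Now apply (\ref{sign}) to each factor to rewrite $\sigma^*(\ev_{-m_k}^*(\cS((\rpar^{(k)})^t)))\cong \ev_{m_k}^*(\cS((\rpar^{(k)})^t)\otimes \operatorname{sgn})$. Invoking the classical symmetric group identity $\cS(\mu^t)\otimes\operatorname{sgn}\cong \cS(\mu)$ for each partition $\mu=\rpar^{(k)}$, the right hand side collapses to
$$
\ev_{m_l}^*(\cS(\rpar^{(l)}))\circ\cdots\circ\ev_{m_1}^*(\cS(\rpar^{(1)}))=\widetilde{\cS}(A),
$$
which is precisely the definition of $\widetilde{\cS}(A)$ from (\ref{dualspecht}). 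Combining with (\ref{Aredual}) yields both claimed isomorphisms.

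There is essentially no obstacle: each step is a direct application of a formula already established in the section. The only point that warrants any care is (\ref{sign}) itself, which is the mechanism translating the algebra-level sign twist $\sigma$ into the partition-level transpose via $\mu\leftrightarrow\mu^t$; once that is in hand, the proof is a one-line chain of isomorphisms. I would verify at the end that the block on which $\sigma^*(\cS(A^t))$ naturally lives matches $H^\La_\alpha$, which follows from (\ref{sigmalocal}) and the fact that $A^t\in\Col^{\La^t}_{\alpha^t}$.
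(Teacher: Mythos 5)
Your proposal is correct and follows exactly the same route as the paper: unwind (\ref{spcht}) for $\cS(A^t)$, push $\sigma^*$ through the induction product via (\ref{sign2}), convert each factor using (\ref{sign}) and the symmetric-group identity $\cS(\mu^t)\otimes\operatorname{sgn}\cong\cS(\mu)$, match the result against (\ref{dualspecht}), and finish with (\ref{Aredual}). The final remark about checking the block via (\ref{sigmalocal}) is a harmless extra verification that the paper treats as implicit.
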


\begin{proof}
Recalling the definition (\ref{spcht}),
we have by (\ref{sign}) and (\ref{sign2}) that
\begin{align*}
\sigma^*(\cS(A^t))
&=
\sigma^*(\ev^*_{-m_l}(\cS((\la^{(l)})^t))\circ\cdots\circ
\ev_{-m_1}^*(\cS((\la^{(1)})^t)))\\
&\cong
\ev^*_{m_l}(\cS((\la^{(l)})^t)\otimes\operatorname{sgn})
\circ\dots\circ
\ev^*_{m_1}(\cS((\la^{(1)})^t)\otimes\operatorname{sgn}).
\end{align*}
Since $\cS(\mu^t) \otimes \operatorname{sgn} \cong \cS(\mu)$ for ordinary
representations of the symmetric group, this equals
$\widetilde{\cS}(A)$
according to the definition (\ref{dualspecht}),
which is isomorphic to $\cS(A)^\circledast$ by (\ref{Aredual}).
\end{proof}

Now we can describe what happens when we twist
irreducible $H^{\La^t}_{\alpha^t}$-modules by the automorphism $\sigma$.
In particular, this theorem gives a first explanation for
the alternate parametrization of irreducible modules mentioned
in Remark~\ref{ap}.

\begin{Theorem}\label{mull}
For $A \in \RStd^\La_\alpha$,
we have that $\sigma^*(\cD(A^t)) \cong \widetilde{\cD}(A)
\cong \cD(A^{\downarrow})$.
\end{Theorem}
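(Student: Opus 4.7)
The plan is to chain together Lemma~\ref{ssig}, the definition of $\cD(A^t)$ as the head of a Specht module, and the socle description from Remark~\ref{ap}. The second isomorphism $\widetilde{\cD}(A) \cong \cD(A^\downarrow)$ is already recorded as (\ref{trans}), so the content is the first isomorphism $\sigma^*(\cD(A^t)) \cong \widetilde{\cD}(A)$.

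First, I would observe that since $A \in \RStd^\La_\alpha$, Lemma~\ref{bij2} guarantees $A^t \in \Std^{\La^t}_{\alpha^t}$, so $\cD(A^t)$ is defined by Theorem~\ref{class} as the irreducible head of $\cS(A^t)$. Next, the isomorphism $\sigma:H^\La_\alpha \stackrel{\sim}{\to} H^{\La^t}_{\alpha^t}$ of (\ref{sigmalocal}) makes twisting by $\sigma$ into an equivalence of module categories, and in particular $\sigma^*$ commutes with taking heads. Thus
\[
\sigma^*(\cD(A^t)) \cong \sigma^*\bigl(\operatorname{head}\cS(A^t)\bigr) \cong \operatorname{head}\bigl(\sigma^*\cS(A^t)\bigr).
\]

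Now apply Lemma~\ref{ssig}, which says $\sigma^*\cS(A^t) \cong \cS(A)^\circledast$. Since the duality $\circledast$ swaps head and socle and fixes irreducibles (as recalled at the end of $\S$\ref{sgir}), the head of $\cS(A)^\circledast$ is the dual of the socle of $\cS(A)$. But the socle of $\cS(A)$ is precisely $\widetilde{\cD}(A)$ by Remark~\ref{ap}, and $\widetilde{\cD}(A)^\circledast \cong \widetilde{\cD}(A)$ because $\widetilde{\cD}(A)$ is irreducible. Combining gives $\sigma^*(\cD(A^t)) \cong \widetilde{\cD}(A)$, and then (\ref{trans}) yields $\widetilde{\cD}(A) \cong \cD(A^\downarrow)$.

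There is essentially no obstacle: all the hard work is already packaged into Lemma~\ref{ssig} (which requires the compatibility of $\sigma$ with induction product and evaluation pullback, together with $\cS(\mu^t)\otimes\operatorname{sgn}\cong\cS(\mu)$) and into Remark~\ref{ap} (which identifies the socle of $\cS(A)$ for reverse-standard $A$ by a Schur-functor argument analogous to the proof of Theorem~\ref{class}, using the socle statement of Lemma~\ref{soc}). The only point requiring a little care is bookkeeping which algebra each module lives over, i.e., that $\cD(A^t)$ is an $H^{\La^t}_{\alpha^t}$-module while $\sigma^*(\cD(A^t))$, $\widetilde{\cD}(A)$, and $\cD(A^\downarrow)$ are all $H^\La_\alpha$-modules, so the isomorphisms make sense as stated.
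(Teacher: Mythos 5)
Your proof is correct and follows essentially the same route as the paper's: by Theorem~\ref{class} and Lemma~\ref{bij2} identify $\cD(A^t)$ as the head of $\cS(A^t)$, push through $\sigma^*$ using Lemma~\ref{ssig}, and invoke Remark~\ref{ap} together with (\ref{trans}). The only cosmetic difference is that you unpack the duality step (head of $\cS(A)^\circledast$ equals socle of $\cS(A)$) explicitly, whereas the paper cites Remark~\ref{ap} for the statement that $\widetilde{\cS}(A)$ has irreducible head $\widetilde{\cD}(A)$, which already encapsulates that argument.
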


\begin{proof}
By Theorem~\ref{class} and Lemma~\ref{bij2},
$\cD(A^t)$ is the irreducible head of $\cS(A^t)$.
Hence $\sigma^*(\cD(A^t))$ is the irreducible head
of $\sigma^*(\cS(A^t))$, which is isomorphic
to $\widetilde{\cS}(A)$ according to Lemma~\ref{ssig}.
Now use Remark~\ref{ap}.
\end{proof}

\section{Young modules and tilting modules}

In this section we prove a number of additional results
which complete the Schur-Weyl duality picture from
\cite{schur}. The results in $\S\S$\ref{sark}--\ref{ssym}
should be compared with
Mathas' results from \cite{M}.

\subsection{Schur functors commute with duality}
Recall the dualities $\circledast$ on $\mathcal O^\La$ and
on $\rep{H^\La}$ defined in $\S$\ref{spco} and $\S$\ref{sgir}, respectively. 
We want to prove that these two dualities
are intertwined by the Schur functor
$\pi$ from (\ref{sf1}).
Recall to start with 
from \cite[Theorem A.2]{schur} that $H_d^\bm$
is a symmetric algebra with symmetrizing form
$\tau:H_d^\bm \rightarrow \C$
defined by picking out the
$x_1^{l-1} \cdots x_d^{l-1}$-coefficient of a vector
when expanded in terms of the basis (\ref{bt}).
This restricts to a symmetrizing form on each 
$H^\La_\alpha$
for $\alpha \in Q_+$ with $\height(\alpha) =d$.
Hence each $H^\La_\alpha$ is also a symmetric algebra.

As well as the duality $\circledast$ on $\rep{H_\alpha^\La}$,
there is another duality $\#$
mapping a left module $M$
to
\begin{equation}
M^\# := \hom_{H_\alpha^\La}(M, H_\alpha^\La)
\end{equation}
viewed as a left $H_\alpha^\La$-module with action
$(h f)(m) = f(m) h^*$ for $h \in H_\alpha^\La$, $m \in M$
and $f:M \rightarrow H_\alpha^\La$;
here $h^*$ denotes the image of $h$ under the anti-automorphism
from (\ref{Star}).
By general principles, the fact that $H_\alpha^\La$ 
is symmetric implies that this
duality is equivalent to the duality $\circledast$.

\begin{Lemma}\label{sd2}
For any finite dimensional 
left $H_\alpha^\La$-module $M$, there is a natural $H^\La_\alpha$-module
isomorphism
$M^\# \stackrel{\sim}{\rightarrow} M^\circledast,\: f \mapsto \tau\circ f$.
\end{Lemma}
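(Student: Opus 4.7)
The plan is to check directly that $\Phi_M\colon M^\#\to M^\circledast,\ f\mapsto\tau\circ f$, is a well-defined $H^\La_\alpha$-module isomorphism and then observe that naturality in $M$ is formal. The three substantive things to verify are: equivariance (using that $\tau$ is a trace), injectivity (using non-degeneracy of $\tau$), and then bijectivity (by dimension count via the symmetric algebra structure).

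First I would unpack the definitions and confirm $H^\La_\alpha$-linearity. For $h\in H^\La_\alpha$, $f\in M^\#$, $m\in M$, the action on $M^\#$ gives $(hf)(m)=f(m)h^*$, so
$$\Phi_M(hf)(m)=\tau(f(m)h^*),$$
while the action on $M^\circledast$ gives
$$(h\cdot\Phi_M(f))(m)=\Phi_M(f)(h^*m)=\tau(f(h^*m))=\tau(h^*f(m)),$$
using $H^\La_\alpha$-linearity of $f$. These agree because $\tau$ is a symmetrizing form, i.e.\ $\tau(ab)=\tau(ba)$; this is the key input and the only place the symmetric algebra structure from \cite[Theorem A.2]{schur} is used.

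Next I would show $\Phi_M$ is injective by exploiting non-degeneracy. Suppose $\Phi_M(f)=0$, so $\tau(f(m))=0$ for every $m\in M$. Replacing $m$ by $hm$ and using $H^\La_\alpha$-linearity of $f$, one gets $\tau(hf(m))=\tau(f(hm))=0$ for all $h\in H^\La_\alpha$ and all $m\in M$. Since the bilinear form $(a,b)\mapsto\tau(ab)$ on $H^\La_\alpha$ is non-degenerate (a basic property of symmetrizing forms), this forces $f(m)=0$ for every $m$, hence $f=0$.

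Finally, because $H^\La_\alpha$ is a finite-dimensional symmetric algebra, $(-)^\#$ is an exact contravariant self-equivalence of $\rep{H^\La_\alpha}$, so $\dim M^\#=\dim M=\dim M^\circledast$; combined with injectivity, $\Phi_M$ is a bijection. Naturality in $M$ is immediate: for any $H^\La_\alpha$-map $\phi\colon M\to N$, both $\phi^\#$ and $\phi^\circledast$ act as precomposition with $\phi$, so $\Phi_M\circ\phi^\#=\phi^\circledast\circ\Phi_N$ by associativity of composition. There is no real obstacle; the whole proof is a careful bookkeeping exercise once one recognises that the symmetrizing property of $\tau$ supplies exactly the trace identity needed to match the two left actions.
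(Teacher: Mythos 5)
Your proof is correct, and all three verifications (equivariance via the trace identity $\tau(ab)=\tau(ba)$, injectivity via non-degeneracy of the associated bilinear form, and surjectivity via a dimension count using $A\cong A^*$ as bimodules for a symmetric algebra $A$) are exactly the points that need checking. The paper, however, does not argue any of this: it simply defers the entire lemma to \cite[Theorem 3.1]{Rick}, Rickard's general result on symmetric algebras. So while the mathematical content is surely the same as what is hidden behind that citation, your version has the merit of being self-contained and of making explicit precisely where the symmetrizing form $\tau$ enters (the trace identity for equivariance, and non-degeneracy for injectivity and the dimension count); the paper's approach trades that transparency for brevity. One small stylistic remark: in the surjectivity step you could avoid invoking that $(-)^\#$ is a self-duality in full generality and just note directly that $\hom_A(M,A)\cong\hom_A(M,A^*)\cong\hom_{\C}(M,\C)$ as vector spaces once $A\cong A^*$ is known, which is slightly more elementary and uses nothing beyond the bimodule isomorphism furnished by $\tau$.
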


\begin{proof}
See \cite[Theorem 3.1]{Rick}.
\end{proof}

Now fix $\alpha \in Q_+$ of height $d$ and identify
$H^\La_\alpha$ with $\End_{\mathfrak{g}}(\cT^\La_\alpha)^\op$
according to Theorem~\ref{swd0}.

\begin{Lemma}\label{le2}
There is a non-degenerate symmetric bilinear form $(.,.)$ on
$\cT^\La_\alpha$ such that $(xvh,w) = (v,x^* w h^*)$ for
all $v,w \in \cT^\La_\alpha$, $x \in \mathfrak{g}$ and
$h \in H^\La_\alpha$ (recall $x^*$ denotes the transpose
of the 
matrix $x$).
\end{Lemma}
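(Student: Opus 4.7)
The plan is to put the form first on the ambient $(\mathfrak{g},H_d^\bm)$-bimodule $\cP(A^\La) \otimes \cT_d$ and then restrict to the summand $\cT^\La_\alpha = e_\alpha \cdot (\cP(A^\La) \otimes \cT_d)$, exploiting orthogonality with respect to the block idempotent.

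First I would equip $\cP(A^\La) \cong \cL(A^\La)$ with its non-degenerate symmetric contravariant Shapovalov form $(.,.)_1$, which exists because it is an irreducible highest weight $\mathfrak{g}$-module, and satisfies $(xu,v)_1 = (u,x^* v)_1$ for $x \in \mathfrak{g}$.  On the natural $\mathfrak{g}$-module $V$ of column vectors, take the standard inner product $(e_i,e_j) := \delta_{i,j}$, which is symmetric and contravariant; its $d$-fold tensor product yields a non-degenerate symmetric contravariant form $(.,.)_2$ on $\cT_d$.  The tensor product $(.,.) := (.,.)_1 \otimes (.,.)_2$ on $\cP(A^\La) \otimes \cT_d$ is then non-degenerate, symmetric, and $\mathfrak{g}$-contravariant.

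Next I would verify that the right $H_d^\bm$-action is self-adjoint with respect to $(.,.)$.  Each $s_r$ permutes the $r$th and $(r{+}1)$st factors of $\cT_d$ and so acts self-adjointly on $(.,.)_2$.  The generator $x_1$ acts as $\Omega \otimes 1^{\otimes(d-1)}$ with $\Omega = \sum_{i,j} e_{i,j} \otimes e_{j,i} \in \mathfrak{g} \otimes \mathfrak{g}$, operating on the $\cP(A^\La)$-slot paired with the first copy of $V$; self-adjointness of $\Omega$ follows because $\Omega$ is invariant under the simultaneous transpose $e_{i,j} \otimes e_{k,l} \mapsto e_{j,i} \otimes e_{l,k}$ (i.e.\ $\Omega^* = \Omega$ componentwise).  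The remaining $x_r$'s are then self-adjoint by the relation $x_{r+1} = s_r x_r s_r + s_r$.

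Finally, the summand $\cT^\La_\alpha$ is the image of the central idempotent $e_\alpha \in H_d^\bm$, which is a polynomial in the mutually commuting self-adjoint operators $x_1,\dots,x_d$; hence $e_\alpha$ itself is self-adjoint with respect to $(.,.)$, and $\cT^\La_\alpha$ is an orthogonal direct summand of $\cP(A^\La) \otimes \cT_d$.  The restriction of $(.,.)$ to $\cT^\La_\alpha$ is therefore non-degenerate, symmetric and bi-contravariant, as required.  The only step that is not completely mechanical is the self-adjointness of $x_1$ via the symmetry of $\Omega$; everything else is a formal consequence of the tensor construction.
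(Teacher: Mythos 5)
Your proof is correct and follows essentially the same route as the paper's: build the product of the contravariant form on the irreducible $\cP(A^\La)$ with the standard orthonormal form on $\cT_d$, verify $\mathfrak{g}$-contravariance and self-adjointness of the right $H_d$-action (with $s_r$ clear and $x_1$ handled via $\Omega$), and then restrict to the block summand $\cT^\La_\alpha$. You are a bit more explicit than the paper about why the restriction stays non-degenerate, namely that $e_\alpha$ is a self-adjoint idempotent (being a polynomial in the $*$-fixed $x_r$'s) so $\cT^\La_\alpha$ is an orthogonal summand --- a point the paper leaves implicit when it says ``it suffices to construct such a form on all of $\cP(A^\La)\otimes \cT_d$.''
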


\begin{proof}
Recalling that 
$\cT^\La_\alpha$ is a block of $\cP(A^\La) \otimes \cT_d$,
it suffices to construct such a non-degenerate form on all
of $\cP(A^\La) \otimes \cT_d$.
As $\cP(A^\La)$ is irreducible, 
it admits a non-degenerate symmetric bilinear form $(.,.)$
such that $(x v, w) = (v, x^* w)$ for each $x \in \mathfrak{g}$
and $v, w \in \cP(A^\La)$ .
There is also a non-degenerate symmetric bilinear form on
$\cT_d$ 
with respect to which the monomials 
in the standard basis of the natural 
$\mathfrak{g}$-module are orthonormal.
The product of these forms gives us a 
non-degenerate symmetric bilinear form on $\cP(A^\La) \otimes \cT_d$
such that $(xv, w) = (v, x^* w)$ again.
The fact that
$(v h, w) = (v, wh^*)$ for each $h \in H_d$
is clear for the action of each $s_r$,
and follows for the action of
$x_1$ by the definition of the form as a product,
recalling $x_1$ acts as multiplication by $\Omega$.
\end{proof}

For the next two lemmas, we define another functor
\begin{equation}
\delta := \hom_{\mathfrak{g}}(?, \cT^\La_\alpha):\mathcal{O}^\La_\alpha
\rightarrow \rep{H^\La_\alpha},
\end{equation}
with $h \in H^\La_\alpha$ acting on $f \in \delta(M) = \hom_{\mathfrak{g}}(M, \cT^\La_\alpha)$ by the rule $(hf)(m) = f(m)h^*$.

\begin{Lemma}\label{le3}
There is an isomorphism $\delta \circ \pi^* \cong \#$ as endofunctors of
$\rep{H^\La_\alpha}$.
\end{Lemma}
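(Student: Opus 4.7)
The plan is to compute $\delta(\pi^*(M)) = \hom_{\mathfrak{g}}(\cT^\La_\alpha \otimes_{H^\La_\alpha} M, \cT^\La_\alpha)$ via the standard tensor-hom adjunction and then to transport the answer to $H^\La_\alpha$ using the identification $\End_{\mathfrak{g}}(\cT^\La_\alpha)^{\op} \cong H^\La_\alpha$ supplied by Theorem~\ref{swd0}. Throughout, abbreviate $T := \cT^\La_\alpha$ and $H := H^\La_\alpha$.

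\smallskip

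First I would apply the standard tensor-hom adjunction to produce, for each $M \in \rep{H}$, a natural vector space isomorphism
\begin{equation*}
\Psi_M: \hom_{\mathfrak{g}}(T \otimes_{H} M, T) \stackrel{\sim}{\longrightarrow} \hom_{H}(M, \End_{\mathfrak{g}}(T)),\qquad f \mapsto \tilde f,
\end{equation*}
where $\tilde f(m)(v) := f(v \otimes m)$. For this to be a well-defined $H$-module map on the right, one equips $\End_{\mathfrak{g}}(T)$ with the left $H$-module structure coming from the right $H$-action on the \emph{source} copy of $T$, namely $(h \cdot \phi)(v) := \phi(vh)$; then $\tilde f(hm)(v) = f(vh \otimes m) = (h \cdot \tilde f(m))(v)$. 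Naturality in $M$ is standard.

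\smallskip

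Second, I would transport the left $H$-action on the left hand side (which by the definition of $\delta$ is $(h \cdot f)(v \otimes m) = f(v \otimes m)\cdot h^*$) across $\Psi_M$. A direct check gives
\begin{equation*}
\widetilde{(h \cdot f)}(m)(v) = (h \cdot f)(v \otimes m) = f(v \otimes m) \cdot h^* = \tilde f(m)(v) \cdot h^*,
\end{equation*}
so on the right hand side the corresponding left $H$-action is the one induced from the right $H$-action on the \emph{target} copy of $T$, twisted by $*$, i.e.\ $(h\cdot_\delta \phi)(v) := \phi(v)\cdot h^*$.

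\smallskip

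Third, I invoke Theorem~\ref{swd0}: the map $R: H \to \End_{\mathfrak{g}}(T)^{\op}$, $h \mapsto R_h$ with $R_h(v) := vh$, is an algebra isomorphism. Reading $R$ as a linear bijection $H \cong \End_{\mathfrak{g}}(T)$ and computing, one gets
\begin{equation*}
h \cdot R_{h'} = R_{hh'},\qquad h \cdot_\delta R_{h'} = R_{h'h^*},
\end{equation*}
so $R$ identifies the ``source'' action with the standard left multiplication on $H$ and the ``target twisted by $*$'' action with the right-multiplication-by-$h^*$ action. Pushing $\Psi_M$ along $R$ therefore yields a natural isomorphism $\delta(\pi^*(M)) \cong \hom_H(M, H)$ in which the $\delta$-action is precisely $(h\cdot g)(m) = g(m)\cdot h^*$. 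This is the very definition of the $\#$-action, completing the proof.

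\smallskip

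No serious obstacle is expected — the argument is bookkeeping of left/right module structures — but the one place care is needed is Step~3, where one must not confuse the ``source'' and ``target'' $H$-structures on $\End_{\mathfrak{g}}(T)$, as these are the two actions that appear as the hom-variable and as the $\#$-action respectively, and it is only after applying the anti-isomorphism $R$ that the twist by $*$ in the definition of $\#$ arises naturally.
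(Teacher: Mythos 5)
Your argument is correct and is essentially the paper's own proof, which records exactly the same chain of natural isomorphisms $\hom_{\mathfrak{g}}(\cT^\La_\alpha \otimes_{H^\La_\alpha} M, \cT^\La_\alpha) \cong \hom_{H^\La_\alpha}(M, \End_{\mathfrak{g}}(\cT^\La_\alpha)) \cong \hom_{H^\La_\alpha}(M, H^\La_\alpha) = M^\#$ coming from the tensor-hom adjunction and Theorem~\ref{swd0}. The paper leaves the left/right module bookkeeping and the appearance of the anti-automorphism $*$ implicit; you have simply made that explicit.
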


\begin{proof}
There are natural isomorphisms
\begin{align*}
\delta \circ \pi^*(M) &= \hom_{\mathfrak{g}}(\cT^\La_\alpha \otimes_{H^\La_\alpha} M, \cT^\La_\alpha)\\&\cong \hom_{H^\La_\alpha}(M, \End_{\mathfrak{g}}(\cT^\La_\alpha)) \cong
\hom_{H^\La_\alpha}(M, H^\La_\alpha) = M^\#,
\end{align*}
for any $M \in \rep{H^\La_\alpha}$.
\end{proof}

\begin{Lemma}\label{le4}
  There is an isomorphism $\delta \cong \pi \circ \circledast$ of functors
from $\mathcal O^\La_\alpha$ to $\rep{H^\La_\alpha}$.
\end{Lemma}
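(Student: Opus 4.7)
The plan is to use the non-degenerate symmetric form $(.,.)$ from Lemma~\ref{le2} to produce a canonical $\mathfrak{g}$-equivariant isomorphism $\phi : \cT^\La_\alpha \stackrel{\sim}{\to} (\cT^\La_\alpha)^\circledast$, $v \mapsto (v,-)$, and then piggyback on this to build the functor isomorphism. First I would observe that since diagonal matrices are self-transpose, the contravariance property $(xv,w) = (v,x^*w)$ forces $(v,w)=0$ whenever $v$ and $w$ lie in distinct $\mathfrak{h}$-weight spaces; combined with non-degeneracy of $(.,.)$ this means it restricts to a perfect pairing on each (finite-dimensional) weight space, so $\phi$ is indeed an isomorphism.

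Next, for $M \in \mathcal{O}^\La_\alpha$, define a natural map
$$
\psi_M : \delta(M) = \hom_{\mathfrak{g}}(M, \cT^\La_\alpha) \longrightarrow \hom_{\mathfrak{g}}(\cT^\La_\alpha, M^\circledast) = \pi(M^\circledast)
$$
by $\psi_M(f)(w)(m) := (f(m),w)$. The weight-respecting property of $(.,.)$ ensures that for each $w$ the functional $m \mapsto (f(m),w)$ lies in $M^\circledast$ (i.e.\ is supported on finitely many weight spaces of $M$), and the identity $(f(xm),w) = (xf(m),w) = (f(m),x^*w)$ shows that $\psi_M(f)$ is $\mathfrak{g}$-equivariant with respect to the $*$-twisted action on $M^\circledast$. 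Bijectivity of $\psi_M$ follows formally by combining $\phi$ with the standard natural isomorphism $\hom_{\mathfrak{g}}(M,N^\circledast) \cong \hom_{\mathfrak{g}}(N,M^\circledast)$ coming from the involutive contravariant duality $\circledast$ on category $\mathcal{O}$ (applied to $N = \cT^\La_\alpha$, which lies in $\mathcal{O}^\La_\alpha$ as a summand of $\cP(A^\La) \otimes \cT_d$).

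It remains to verify $H^\La_\alpha$-equivariance and naturality of $\psi_M$. For equivariance, unwinding definitions one needs $\psi_M(hf) = h\,\psi_M(f)$, which after applying the full identity $(vh,w) = (v,wh^*)$ of Lemma~\ref{le2} reduces on both sides to the expression $(f(m), wh)$ and so is immediate; naturality in $M$ is a one-line check. The main obstacle is essentially a bookkeeping one: one must keep straight the two distinct $*$-anti-automorphisms (one on $\mathfrak{g}$, one on $H^\La_\alpha$), together with the fact that $\cT^\La_\alpha$ is typically infinite-dimensional, so that ``self-duality'' of $\cT^\La_\alpha$ via the form genuinely means weight-space by weight-space rather than globally. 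Once these conventions are in place, Lemma~\ref{le2} supplies everything needed.
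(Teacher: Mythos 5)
Your map $\psi_M$ coincides (after using symmetry of the form) with the paper's $\hat f$, so the core construction is identical; the interesting difference is in how you establish bijectivity. The paper argues directly that $\psi_M$ is injective by non-degeneracy of the form, and then that $\dim\delta(M)=\dim\pi(M^\circledast)$ because $M$ and $M^\circledast$ have the same composition factors and each indecomposable summand of $\cT^\La_\alpha$ is simultaneously the projective cover and the injective hull of one simple. You instead factor $\psi_M$ as the composite of $\hom_{\mathfrak g}(M,\phi)$ with the transposition isomorphism $\hom_{\mathfrak g}(M,N^\circledast)\cong\hom_{\mathfrak g}(N,M^\circledast)$ for $N=\cT^\La_\alpha$, where $\phi:\cT^\La_\alpha\xrightarrow{\sim}(\cT^\La_\alpha)^\circledast$ is the $\mathfrak g$-equivariant isomorphism induced by the form (which you correctly justify weight-space by weight-space). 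That route is more categorical and bypasses the dimension count, at the price of needing the observation that $\cT^\La_\alpha$ belongs to $\mathcal O^\La_\alpha$ so the $\circledast$-biduality applies; this is supplied by Lemma~\ref{pg}. Your $H^\La_\alpha$-equivariance check via the identity $(vh,w)=(v,wh^*)$ and involutivity of $*$ is correct, and matches the (unspoken) verification the paper suppresses. Both proofs are valid; yours isolates the role of Lemma~\ref{le2} more cleanly, while the paper's is shorter given what has already been established about the prinjective structure of $\cT^\La_\alpha$.
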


\begin{proof}
Define a natural homomorphism
$\delta(M) \rightarrow \pi(M^\circledast)$ for any $M \in \mathcal O^\La_\alpha$
by mapping $f \in \delta(M) = \hom_{\mathfrak{g}}(M, \cT^\La_\alpha)$
to $\hat f \in \pi(M^\circledast) = \hom_{\mathfrak{g}}(\cT^\La_\alpha, M^\circledast)$, where $\hat f (t)(m)  = (t,f(m))$ for $t \in \cT^\La_\alpha$ and $m \in M$.
Here, $(.,.)$ is the bilinear form from Lemma~\ref{le2}. The non-degeneracy
of this form gives easily that our map is injective. It is an isomorphism
because $\delta(M)$ and $\pi(M^\circledast)$ both have the same dimension.
To see that, note that $M$ and $M^\circledast$ have all the same composition multiplicities, and $\cT^\La_\alpha$ is a direct sum of prinjective indecomposable modules, each of which is both the projective cover and the injective hull of the same irreducible module.
\end{proof}

\begin{Theorem}\label{duality}
There is an isomorphism
$\pi \circ \circledast \cong \circledast \circ \pi$
of functors from $\mathcal O_\alpha^\La$ to $\rep{H_\alpha^\La}$.
\end{Theorem}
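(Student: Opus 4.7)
The plan is to reduce the theorem to a standard property of Gabriel quotient functors. Combining Lemmas~\ref{le3} and \ref{sd2}, we have $\circledast \circ \pi \cong \# \circ \pi \cong \delta \circ \pi^* \circ \pi$, while by Lemma~\ref{le4}, $\pi \circ \circledast \cong \delta$. So the theorem is equivalent to the assertion that the natural map $\delta(\epsilon_M) : \delta(M) \to \delta(\pi^* \pi M)$ induced by the counit $\epsilon_M : \pi^* \pi M \to M$ of the adjunction $\pi^* \dashv \pi$ is an isomorphism for every $M \in \mathcal{O}^\La_\alpha$.

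To establish this, I would first observe that $T := \cT^\La_\alpha$ is injective in $\mathcal{O}^\La_\alpha$: by Lemma~\ref{pg} it is a direct sum of prinjective indecomposables $\cP(A)$ with $A \in \Std^\La$, each of which is injective by Lemma~\ref{pg0}. Therefore the contravariant functor $\delta = \hom_{\mathfrak{g}}(-, T)$ is exact, and it suffices to show that both the kernel $K$ and cokernel $C$ of $\epsilon_M$ are annihilated by $\delta$. Since $\pi$ is a Gabriel quotient functor admitting a fully faithful left adjoint, we have $\pi \circ \pi^* \cong \id$; applying the exact functor $\pi$ to the four-term exact sequence associated with $\epsilon_M$ therefore yields an isomorphism, forcing $K, C \in \ker \pi$.

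The remaining step is to verify that $\delta$ annihilates $\ker \pi$. By Theorem~\ref{swd}, any $N \in \ker \pi$ has composition factors only among the simples $\cL(B)$ with $B \in \Col^\La \setminus \Std^\La$, so the socle of $N$ contains no $\cL(A)$ with $A \in \Std^\La$; since each summand $\cP(A)$ of $T$ is injective with simple socle $\cL(A)$, every $\mathfrak{g}$-homomorphism $N \to T$ must vanish. Combined with exactness of $\delta$, this gives $\delta(K) = \delta(C) = 0$, so $\delta(\epsilon_M)$ is an isomorphism as required, and chasing all the natural isomorphisms back yields $\pi \circ \circledast \cong \circledast \circ \pi$.

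The one point calling for care is spelling out the identity $\pi \circ \pi^* \cong \id$, since the results recorded so far address only the opposite composition $\pi^* \pi$ on specific prinjective indecomposables (Theorem~\ref{swd}(ii)). A self-contained check reduces via projective presentations to showing that the adjunction unit $\eta : \id \Rightarrow \pi \pi^*$ is an isomorphism on each projective generator $\cY(A) = \pi(\cP(A))$ of $\rep{H^\La_\alpha}$; this follows by combining Theorem~\ref{swd}(ii) with the triangle identity $\epsilon_{\pi^* N} \circ \pi^*(\eta_N) = \id_{\pi^* N}$, which forces $\eta_{\cY(A)}$ to be a unit in the local endomorphism ring $\End_{H^\La_\alpha}(\cY(A))$.
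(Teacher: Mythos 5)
Your proof is correct, but it takes a genuinely different route from the paper's argument. The paper builds the isomorphism in the other direction: starting from the reduction $\# \circ \pi \circ \circledast \cong \pi\circ\circledast\circ\pi^*\circ\delta$, it writes down an explicit natural $\mathfrak{g}$-module map $\theta_M : M \to \circledast\circ\pi^*\circ\delta(M)$, pushes it through $\pi$ to obtain a natural transformation $\eta = \pi(\theta)$, and then verifies that $\eta$ is an isomorphism by a bootstrap: first on $M = \cT^\La_\alpha$ (where it is essentially the pairing of Lemma~\ref{le2}), then on prinjectives by additivity, then on projectives using the two-step prinjective coresolution of any projective in $\mathcal O^\La_\alpha$ from \cite{schur}, and finally on arbitrary $M$ via a projective presentation. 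Your approach instead transfers the burden onto the adjunction counit $\epsilon : \pi^*\pi \Rightarrow \id$ and reduces everything to two cleanly-isolated facts: (a) $\pi\pi^* \cong \id$, a standard property of the Gabriel quotient attached to an idempotent truncation, which as you note can be deduced from Theorem~\ref{swd}(ii), the triangle identity $\pi(\epsilon_M)\circ\eta_{\pi(M)}=\id$, and Fitting's lemma; and (b) the contravariant Hom-functor $\delta = \hom_{\mathfrak g}(-,\cT^\La_\alpha)$ is exact and annihilates $\ker\pi$, which you get from the prinjectivity of the summands $\cP(A)$, $A\in\Std^\La$, together with the observation (better phrased via $\im f$ rather than $\soc N$) that a nonzero map $N \to \cT^\La_\alpha$ would force a composition factor $\cL(A)$, $A \in \Std^\La$, into $N$. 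The underlying facts used are the same — both proofs ultimately rest on the injectivity of $\cT^\La_\alpha$ and on what its socle looks like — but your version bypasses the paper's four-stage induction and makes the structural reason for the theorem (namely that $\delta$ is blind to $\ker\pi$) explicit, which is arguably more transparent. The one place you should tighten is the final paragraph: the triangle identity you want is $\pi(\epsilon_M)\circ\eta_{\pi(M)}=\id_{\pi(M)}$ (the one involving $\pi$ applied to $\epsilon$, not $\pi^*$ applied to $\eta$), which exhibits $\eta_{\cY(A)}$ directly as a split monomorphism $\cY(A)\hookrightarrow\pi\pi^*\cY(A)\cong\cY(A)$, hence an isomorphism by Fitting's lemma.
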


\begin{proof}
In view of Lemma~\ref{sd2}, it suffices to show that
$\pi \cong \# \circ \pi \circ \circledast$. By Lemmas~\ref{le3} and \ref{le4},
we have that
$$
\# \circ \pi \circ \circledast \cong \#\circ \delta \cong \delta \circ \pi^* \circ \delta \cong \pi \circ \circledast\circ \pi^* \circ \delta.
$$
So $\pi\circ\circledast\circ\pi^*\circ\delta$ is an exact functor
(because $\#\circ\pi\circ\circledast$ clearly is such), and we need to show that
$\pi \cong \pi\circ \circledast\circ\pi^* \circ \delta$.

Let us first construct a natural $\mathfrak{g}$-module homomorphism
$$
\theta_M: M \rightarrow 
\circledast \circ \pi^* \circ \delta(M) = 
\left[ \cT^\La_\alpha \otimes_{H^\La_\alpha}
\hom_{\mathfrak{g}}(M, \cT^\La_\alpha) \right]^\circledast
$$
for every $M \in \mathcal O^\La_\alpha$,
by letting $\theta_M(m)(t \otimes f) := (t, f(m))$ for $m \in M, t \in
\cT^\La_\alpha$ and $f \in \hom_{\mathfrak{g}}(M, \cT^\La_\alpha)$.
Here $(.,.)$ is the bilinear form from Lemma~\ref{le2} again.
To see that $\theta_M$ is well-defined, note that
$$
\theta_M(m)(th \otimes f) = (th, f(m)) = (t, f(m)h^*)
= \theta_M(m)(t \otimes hf).
$$
To see that $\theta_M$ is a $\mathfrak{g}$-module homomorphism, note that
$$
\theta_M(xm)(t \otimes f) = (t, f(xm)) = (t, xf(m)) = (x^* t, f(m))
= (x \theta_M(m))(t \otimes f).
$$
Observe finally that $\theta_M$ is an isomorphism
in the special case that $M = \cT^\La_\alpha$.
Indeed, in this case, $\cT^\La_\alpha \otimes_{H^\La_\alpha}
\hom_{\mathfrak{g}}(M, \cT^\La_\alpha)$ can be identified simply with
$\cT^\La_\alpha$, and $\theta_M$ is the isomorphism
$\cT^\La_\alpha \stackrel{\sim}{\rightarrow} (\cT^\La_\alpha)^\circledast$
induced by the non-degenerate bilinear form $(.,.)$.

Applying the functor $\pi$ to the homomorphism $\theta_M$ from the previous
paragraph, we obtain a natural $H^\La_\alpha$-module homomorphism
$$
\eta_M:\pi(M) \rightarrow \pi \circ \circledast \circ \pi^* \circ \delta(M),
\quad f \mapsto \theta_M \circ f
$$
for $M \in \mathcal O^\La_\alpha$.
We have now defined a natural transformation
$\eta:\pi \rightarrow \pi \circ \circledast \circ \pi^* \circ \delta$
between two exact functors. 
To complete the proof of the theorem, we need to show that this natural 
transformation is an isomorphism.
Taking $M = \cT^\La_\alpha$, we get that $\eta_M$
is an isomorphism, because $\theta_M$ is already an isomorphism in that case by the last statement of the previous paragraph.
Hence, by Lemma~\ref{pg} and naturality, $\eta_M$ is an isomorphism whenever $M$ is a prinjective
module.
Moreover by the proof of \cite[Theorem 6.10]{schur},
every projective
module $M \in \mathcal O^\La_\alpha$ 
has a two step resolution $0 \rightarrow M \rightarrow I_1 \rightarrow I_2$
where $I_1$ and $I_2$ are prinjective modules.
Using this and the five lemma, we get that $\eta_M$ is an isomorphism
whenever $M$ is a projective module in $\mathcal O^\La_\alpha$.
Finally for an arbitrary $M$ we take a two step projective resolution
$P_2 \rightarrow P_1 \rightarrow M \rightarrow 0$
and apply the five lemma once more.
\end{proof}

\subsection{Young modules and the double centralizer property}\label{sym}
We next introduce another important family of $H^\La_\alpha$-modules,
the so-called {\em Young modules} $\cY(A)$ for $A \in \Col^\La_\alpha$.
For $I$ bounded-below, these are simply the images of the
projective indecomposable modules $\cP(A)$ in $\mathcal O^\La_\alpha$
under the Schur functor $\pi$. We will give a
more intrinsic definition which is valid for arbitrary $I$.

So assume that $I$ is arbitrary and $\La$ and $\alpha$ are 
fixed as usual.
Given $A \in \Col_\alpha^\La$, set $\la  = (\la^{(1)},\dots,\la^{(l)}):= 
\la(A)$ and 
$d_i := |\la^{(i)}|$ for each $i=1,\dots,l$.
Note that $d = d_1 + \dots + d_l$ is the height of $\alpha$.
Let $S_{\la^{(i)}}$ denote the parabolic subgroup 
$S_{\la^{(i)}_1} \times S_{\la^{(i)}_2} \times \cdots$
of the symmetric group 
$S_{d_i}$,
so that $S_{\la^{(1)}} \times \cdots \times S_{\la^{(l)}}$ is a parabolic
subgroup of $S_d$.
Define $\cX(A)$ to be the left ideal of $H^\La_\alpha$ generated
by the element
\begin{equation}\label{xla}
\prod_{i=2}^l \prod_{j=1}^{d_1+\cdots+d_{i-1}}
(x_j - m_i) \cdot \sum_{w \in S_{\la^{(1)}} \times \cdots \times S_{\la^{(l)}}} w.
\end{equation}
We call $\cX(A)$ the {\em reduced permutation module}
because it is a block-wise version of (the degenerate analogue of) the
permutation module of Dipper, James and Mathas
\cite{DJM}.
More precisely, the Dipper-James-Mathas
permutation module 
is the left ideal of $H_d^\bm$ generated by
the element (\ref{xla}),
and then our $\cX(A)$ is obtained from that by multiplying by the
block idempotent $e_{\alpha}$.

To connect the reduced permutation module $\cX(A)$
to parabolic category $\mathcal O$, assume in this paragraph
that $I$ is bounded-below, so that the Schur functor
$\pi$ from (\ref{sf1}) is defined.
Recall the {\em divided power module}
$\cZ(A)$ from \cite[(4.7)]{schur}:
\begin{equation}\label{dpm}
\cZ(A) := U(\mathfrak{g}) \otimes_{U(\mathfrak{p})}
\left[
I^\La \otimes Z^{\la^{(1)}}(V_1) \otimes\cdots\otimes Z^{\la^{(l)}}(V_l)
\right].
\end{equation}
Some explanations are needed here.
First $\mathfrak{p}$ is the standard parabolic subalgebra of
$\mathfrak{g}$ with Levi factor
$\mathfrak{gl}_{n_1}(\C)\oplus\cdots\oplus \mathfrak{gl}_{n_l}(\C)$
as in $\S$\ref{spco}.
Then $I^\La$ denotes the one dimensional $\mathfrak{p}$-module
associated to the weight
$$
\sum_{i=1}^l (n_1+\cdots+n_{i-1}+m_i)(\eps_{n_1+\cdots+n_{i-1}+1}+\cdots+\eps_{n_1+\cdots+n_i}).
$$
Next $V_i$ denotes the $\mathfrak{p}$-submodule of
the natural $\mathfrak{g}$-module of column vectors spanned by
the first $n_1+\cdots+n_i$ of the standard basis vectors.
Finally for a vector space $E$ and a partition $\mu$ of $n$
we write $Z^\mu(E)$ for the submodule of $E^{\otimes n}$
consisting of all tensors that are invariant with respect to the
natural action of the parabolic subgroup $S_\mu$ of $S_n$.
According to \cite[Theorem 4.14]{schur}, we have that
\begin{equation}\label{y1}
\cZ(A) = \cP(A) \oplus (*)
\end{equation}
where $(*)$ is a direct sum of $\cP(B)$'s for $B \in \Col^\La$
that are higher than $A$ in the sense that
$\la(B) > \la(A)$ in the dominance order on multipartitions,
i.e. the diagram of $\la(B)$ is 
obtained from that of 
$\la(A)$ by moving boxes up.
We also need the following key isomorphism which is established
in \cite[Theorem 6.9]{schur}:
for every $A \in \Col^\La_\alpha$ there is an isomorphism
\begin{equation}\label{y2}
\cX(A) \cong \hom_{\mathfrak{g}}(\cT^\La_\alpha, \cZ(A))
\end{equation}
of $H^\La_\alpha$-modules.
Now we can formulate the following theorem which gives the intrinsic
definition of Young modules.

\begin{Theorem}\label{young}
For any index set $I$ and $A \in \Col^\La_\alpha$, there exists a unique
(up to isomorphism) indecomposable $H^\La_\alpha$-module
$\cY(A)$ such that
$$
\cX(A) = \cY(A) \oplus (\dagger)
$$
where $(\dagger)$ denotes a direct sum of $\cY(B)$'s for $B \in \Col^\La_\alpha$
with $\la(B) > \la(A)$ in the dominance order on multipartitions.
Moreover:
\begin{itemize}
\item[(i)]
If $A \in \Std^\La_\alpha$ then $\cY(A)$ coincides with the projective
cover of  the irreducible $H^\La_\alpha$-module $\cD(A)$
from Theorem~\ref{class}.
\item[(ii)]
If $I$ is bounded-below then $\cY(A) \cong \pi(\cP(A))$
for each $A \in \Col^\La_\alpha$.
\end{itemize}
\end{Theorem}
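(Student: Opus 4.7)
The plan is to reduce to the case of bounded-below $I$ and then exploit the Schur functor $\pi$ from (\ref{sf1}) together with the isomorphism (\ref{y2}) and the decomposition (\ref{y1}). For the reduction, I note that $\Col^\La_\alpha$ is a finite set, so for any fixed $A$ one can pick $k \in I$ small enough that every $B \in \Col^\La_\alpha$ with $\la(B) \geq \la(A)$ has all entries in $I_{\geq k}$. The block $H^\La_\alpha$ and the module $\cX(A)$, defined by the formula (\ref{xla}) purely in terms of the generators $s_r, x_r$ and the block idempotent $e_\alpha$, are unchanged if $I$ is replaced by $I_{\geq k}$, so the general case follows from the bounded-below one.

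Assume then that $I$ is bounded-below. Applying the exact functor $\pi$ to the decomposition (\ref{y1}) and invoking (\ref{y2}) yields
$$
\cX(A) \;\cong\; \pi(\cP(A)) \;\oplus\; \bigoplus_{\substack{B \in \Col^\La_\alpha \\ \la(B) > \la(A)}} \pi(\cP(B))^{\oplus m_B}
$$
for certain non-negative integers $m_B$. I would define $\cY(B) := \pi(\cP(B))$ for every $B \in \Col^\La_\alpha$. The crucial point is that each $\cY(B)$ is a non-zero indecomposable $H^\La_\alpha$-module. This follows from Theorem~\ref{swd}(vi): since $\pi$ is fully faithful on projectives, $\End_{H^\La_\alpha}(\pi(\cP(B))) \cong \End_\mathfrak{g}(\cP(B))$, which is a local ring because $\cP(B)$ is indecomposable in $\mathcal O^\La_\alpha$. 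A straightforward induction on $\la(A)$ in the dominance order on $l$-multipartitions, running from top down, then identifies the displayed decomposition as the one demanded by the theorem and gives both existence and part (ii).

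Uniqueness of $\cY(A)$, given the $\cY(B)$ for $\la(B) > \la(A)$, is an immediate consequence of the Krull-Schmidt theorem applied to the finite-dimensional module $\cX(A)$: after cancelling the summands $\cY(B)$ from the two decompositions a single indecomposable remains on each side, forcing the two copies of $\cY(A)$ to be isomorphic. For (i), when $A \in \Std^\La_\alpha$ the module $\cY(A) = \pi(\cP(A))$ is the projective cover of $\pi(\cL(A)) = \cD(A)$ by Theorem~\ref{swd}(ii), as required.

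The main obstacle I anticipate is verifying indecomposability of $\cY(B) = \pi(\cP(B))$ when $B \notin \Std^\La_\alpha$: a priori the Schur functor could collapse $\cP(B)$ to zero or break it into several pieces, and it is only the full-faithfulness of $\pi$ on \emph{all} projectives (not just the prinjective ones indexed by $\Std^\La$) that rules this out. Once that input from Theorem~\ref{swd}(vi) is in hand, everything else is essentially bookkeeping.
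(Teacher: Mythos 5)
Your proposal is correct and matches the paper's proof in every essential respect: reducing to bounded-below $I$ because $\cX(A)$ is insensitive to the choice of index set, defining $\cY(A) := \pi(\cP(A))$, combining (\ref{y1}) and (\ref{y2}) to obtain the decomposition, and invoking Theorem~\ref{swd}(vi) together with Fitting's lemma to get indecomposability. The paper leaves the Krull--Schmidt uniqueness and the top-down induction implicit, but your added remarks only make the same argument more explicit.
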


\begin{proof}
Since $\cX(A)$ does not depend on the particular choice of $I$,
we may as well assume that $I$ is bounded-below.
In that case we {\em define} 
$\cY(A)$ to be the $H^\La_\alpha$-module
$\pi(\cP(A))$ for each $A \in \Col^\La_\alpha$, so that (ii) is automatic.
By Theorem~\ref{swd}(ii) this is the projective cover of
$\cD(A)$ whenever $A \in \Std^\La_\alpha$, giving (i).
Moreover by (\ref{y2}) we have that
$\cX(A) = e_{\alpha} \pi(\cZ(A))$, hence by (\ref{y1}) we see that
$$
\cX(A) = \cY(A)+(\dagger)
$$
where $(\dagger)$ consists of higher $\cY(B)$'s 
as in the statement of the theorem.
It remains to observe that $\cY(A)$ is indecomposable.
Since $\cP(A)$ is projective and indecomposable,
we get by Theorem~\ref{swd}(vi) and Fitting's lemma
that
$$
\End_{H^\La_\alpha}(\cY(A)) \cong
\End_{\mathfrak{g}}(\cP(A))
$$
is a local ring. Hence $\cY(A)$ is indecomposable as required.
\end{proof}

The main reason Young modules are important is the
following theorem, which we view as a 
``double centralizer property''.
By a {\em Young generator} for $H^\La_\alpha$, we mean a finite dimensional
module
that is isomorphic to
a direct sum of 
the Young modules $\cY(A)$ for $A \in \Col^\La_\alpha$,
with each appearing at least once.

\begin{Theorem}\label{kos}
Assume that $I$ is bounded-below and $\alpha \in Q_+$.
Let $\cY$ be a {Young generator} for
$H^\La_\alpha$. Then the category of finite dimensional left modules over the
endomorphism algebra
$\End_{H^\La_\alpha}(\cY)^{\op}$
is equivalent to the category $\mathcal O^\La_\alpha$.
\end{Theorem}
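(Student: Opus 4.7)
The plan is to leverage the Schur functor $\pi$ to reduce the assertion to classical Morita theory. The key inputs from the preceding subsections are Theorem~\ref{young}(ii), which identifies Young modules as images of projective indecomposables under $\pi$, and Theorem~\ref{swd}(vi), which states that $\pi$ is fully faithful on projectives.

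The first step is to invoke Krull--Schmidt together with the indecomposability of each $\cY(A)$ established in Theorem~\ref{young} to write
$\cY \cong \bigoplus_{A \in \Col^\La_\alpha} \cY(A)^{\oplus n_A}$
with all $n_A \geq 1$, the positivity being exactly the hypothesis that $\cY$ is a Young generator. Then I would set
$$
\cP := \bigoplus_{A \in \Col^\La_\alpha} \cP(A)^{\oplus n_A} \in \mathcal{O}^\La_\alpha,
$$
which is a projective generator of the finite-length abelian category $\mathcal{O}^\La_\alpha$ since every indecomposable projective appears as a summand with positive multiplicity. Because $I$ is bounded-below, Theorem~\ref{young}(ii) then yields $\pi(\cP) \cong \cY$.

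The second step uses Theorem~\ref{swd}(vi) to conclude that the natural algebra homomorphism
$$
\End_{\mathfrak{g}}(\cP) \longrightarrow \End_{H^\La_\alpha}(\pi(\cP)) = \End_{H^\La_\alpha}(\cY)
$$
is an isomorphism; passing to opposite algebras identifies $E := \End_{H^\La_\alpha}(\cY)^{\op}$ with $\End_{\mathfrak{g}}(\cP)^{\op}$.

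The final step is the standard Morita equivalence attached to a projective generator: the functor $\hom_{\mathfrak{g}}(\cP,-)$ sends $\mathcal{O}^\La_\alpha$ equivalently to the category of finite dimensional left $\End_{\mathfrak{g}}(\cP)^{\op}$-modules, and via the isomorphism of the previous step this is exactly the category of finite dimensional left $E$-modules. There is no real obstacle here once Theorems~\ref{young} and \ref{swd}(vi) are in hand; the whole argument is formal category theory built around those two substantive inputs, with Theorem~\ref{swd}(vi) doing the genuine work.
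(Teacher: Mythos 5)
Your proposal is correct and matches the paper's proof essentially step for step: choose a projective generator $\cP$ with $\pi(\cP)\cong\cY$ using Theorem~\ref{young}(ii), invoke Theorem~\ref{swd}(vi) for full faithfulness on projectives to identify $\End_{\mathfrak{g}}(\cP)^{\op}$ with $\End_{H^\La_\alpha}(\cY)^{\op}$, and finish by the standard Morita equivalence for a projective generator. The only difference is that you spell out the Krull--Schmidt decomposition of $\cY$ explicitly, which the paper leaves implicit.
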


\begin{proof}
By Theorem~\ref{young}(ii),
there is a projective generator $\cP$ for $\mathcal O^\La_\alpha$
such that $\cY \cong \pi(\cP)$.
By general principles, the category of finite dimensional
left modules over
$\End_{\mathfrak{g}}(\cP)^\op$
is equivalent to $\mathcal O^\La_\alpha$.
By Theorem~\ref{swd}(vi) the Schur functor $\pi$ is fully faithful on
projectives, so $\pi$ defines an algebra isomorphism
\begin{equation}\label{myg}
\End_{\mathfrak{g}}(\cP)^\op
\cong 
\End_{H^\La_\alpha}(\cY)^\op.
\end{equation}
The theorem follows.
\end{proof}

\begin{Corollary}\label{koszul}
If $\cY$ is a Young generator for $H^\La_\alpha$
then the finite dimensional algebra
$\End_{H^\La_\alpha}(\cY)^{\op}$
admits a unique (up to automorphism) grading
with respect to which it is a Koszul algebra.
\end{Corollary}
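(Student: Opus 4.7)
The strategy is to reduce the problem to the basic algebra of $\mathcal O^\La_\alpha$ and then invoke the Koszulity of parabolic category $\mathcal O$ due to Beilinson, Ginzburg and Soergel. First, I would consider the minimal (multiplicity-free) Young generator $\cY_{\min} := \bigoplus_{A \in \Col^\La_\alpha} \cY(A)$ and set $E_{\min} := \End_{H^\La_\alpha}(\cY_{\min})^{\op}$. By Theorem~\ref{young}(ii) together with the isomorphism (\ref{myg}), $E_{\min}$ is canonically identified with the endomorphism algebra of a minimal projective generator of $\mathcal O^\La_\alpha$; that is, $E_{\min}$ is the basic algebra of the category $\mathcal O^\La_\alpha$. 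For an arbitrary Young generator $\cY = \bigoplus_A \cY(A)^{\oplus n_A}$ with all $n_A \geq 1$, the algebra $E := \End_{H^\La_\alpha}(\cY)^{\op}$ is Morita equivalent to $E_{\min}$, and it may be realized as $e M_N(E_{\min}) e$ for a block-diagonal idempotent $e$ that is a sum of primitive idempotents along the diagonal (one for each summand of $\cY$).

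Next I would invoke the known Koszulity of the basic algebra of parabolic category $\mathcal O$ in finite type $A$. This is the main theorem of \cite{BGS} in the regular case, extended to arbitrary (in particular, possibly singular) parabolic blocks by Backelin; ultimately it rests on the parity/purity properties of the Kazhdan-Lusztig polynomials appearing in (\ref{scf}), hence on the Kazhdan-Lusztig conjecture proved in \cite{BB, BrK}. This provides a positive grading on $E_{\min}$ making it Koszul, with semisimple degree zero part containing all primitive idempotents. Consequently, the idempotent $e \in M_N(E_{\min})$ describing $\cY$ inside $\cY_{\min}^{\oplus N}$ is homogeneous of degree zero, so the Koszul grading on $E_{\min}$ transports to $E = e M_N(E_{\min}) e$ and makes the latter into a Koszul algebra as well.

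For uniqueness, I would appeal to the general fact (see \cite[Corollary 2.5.2]{BGS}) that on a finite dimensional basic algebra, any two positive gradings with semisimple degree zero part whose associated graded algebra is Koszul are conjugate by an algebra automorphism; the same then transfers through Morita equivalence to $E$. The only substantive step here is the invocation of Koszulity for parabolic blocks in type $A$; everything else (Morita reduction, lifting the grading through an idempotent, uniqueness) is formal. Thus I expect the main obstacle to be merely bookkeeping, namely checking that the idempotent $e$ can indeed be arranged to be homogeneous — but this is immediate because the indecomposable summands of $\cY$ correspond to primitive idempotents of $E_{\min}$, all of which lie in the degree zero part of any Koszul grading.
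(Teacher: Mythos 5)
Your proposal is correct and follows essentially the same route as the paper: reduce via Theorem~\ref{kos} (which you re-derive from Theorem~\ref{young}(ii) and (\ref{myg})) to the statement that $\End_{H^\La_\alpha}(\cY)^{\op}$ is Morita equivalent to a block of parabolic category $\mathcal O$, then invoke Beilinson--Ginzburg--Soergel and Backelin for Koszulity and \cite[Corollary 2.5.2]{BGS} for uniqueness. The only difference is that you spell out the Morita bookkeeping (the idempotent reduction $E = eM_N(E_{\min})e$ and the check that $e$ is homogeneous of degree zero), which the paper subsumes under ``general results''; this is a welcome expansion rather than a different argument.
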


\begin{proof}
This follows from the theorem by 
 general results of Beilinson, Ginzburg and Soergel
\cite[Theorem 1.1.3]{BGS} and Backelin \cite[Theorem 1.1]{Back}; see 
also \cite[Corollary 2.5.2]{BGS} for the unicity.
\end{proof}

\begin{Remark}\rm
In particular the (degenerate)
cyclotomic Schur algebra of Dipper, James and Mathas over the ground field $\C$
as defined in \cite[$\S$6]{AMR}
is a Koszul algebra.
This follows from Corollary~\ref{koszul} because 
the blocks of the cyclotomic Schur algebra 
are endomorphism algebras
of Young generators of corresponding $H^\La_\alpha$'s; see \cite[Theorem 6.10]{schur}.
\end{Remark}

\subsection{Tilting modules 
and the opposite Schur-Weyl duality}\label{sark}
We assume in this subsection that the index set $I$ is bounded-below.
As well as the parabolic category $\mathcal O^\La$, we also introduced the
opposite parabolic category $\widetilde{\mathcal O}^\La$ in $\S$\ref{spco}.
There is a precise connection between these two categories provided by
{\em Arkhipov's twisting functor} associated to the longest element $w_0$
of the symmetric group $S_n$. In particular this functor is the key to
the proof of the Arkhipov-Soergel reciprocity formula recorded already in
(\ref{asr}).

To give some more details, let $S_{w_0}$ be Arkhipov's semi-regular bimodule
as in \cite[Theorem 1.3]{Skipp}, which is the 
$(U(\mathfrak{g}), U(\mathfrak{g}))$-bimodule associated to the longest element
$w_0$ in the setup of \cite[p.684]{ASt}.
Then Arkipov's twisting functor
\begin{equation}
\Tw_{w_0}:\widetilde{\mathcal O}^\La \rightarrow {\mathcal O}^\La
\end{equation}
is the right exact functor defined by first
tensoring with $S_{w_0}$ then twisting by the automorphism
$\phi_{w_0}:\mathfrak{g} \rightarrow \mathfrak{g}$ arising from
conjugating by the longest element of the symmetric group $S_n$
(viewed as a permutation matrix):
\begin{equation}
\Tw_{w_0}(M) := \phi_{w_0}^* (S_{w_0} \otimes_{U(\mathfrak{g})} M).
\end{equation}
The key properties of this functor are summarized in the following
known lemma;
note in particular that (\ref{asr}) follows at once from this 
and the self-duality of tilting modules.
In the statement of the lemma, we say that an object
in $\widetilde{\mathcal O}^\La$ 
(resp.\ $\mathcal O^\La$) has a standard 
(resp.\ costandard) filtration if it admits
a filtration whose sections are of the form $\widetilde{\cM}(A)$
(resp.\ $\cM(A)^\circledast$)
for $A \in \Col^\La$.

\begin{Lemma}\label{twist}
The twisting functor $\Tw_{w_0}$ defines an equivalence between
the full subcategory of $\widetilde{\mathcal O}^\La$ consisting of all modules
possessing a standard filtration
and the full subcategory of ${\mathcal O}^\La$ consisting of all modules
possessing a costandard filtration. 
%Under this equivalence, short exact sequences
%correspond to short exact sequences.
Moreover for $A \in \Col^\La$ we have that
\begin{itemize}
\item[(i)]
$\Tw_{w_0}(\widetilde{\cM}(A)) \cong {\cM}(A)^\circledast$;
\item[(ii)]
$\Tw_{w_0}(\widetilde{\cP}(A)) \cong \cT(A)$.
\end{itemize}
\end{Lemma}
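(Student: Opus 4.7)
The plan is to reduce everything to the parallel statements in the full (non-parabolic) category $\mathcal O$, which are well-established in the literature. From Arkhipov's original work, as refined in \cite{ASt}, one has: $\Tw_{w_0}$ is right exact; it restricts to an exact equivalence between the full subcategory of $\mathcal O$-modules admitting a Verma flag and the full subcategory of modules admitting a dual-Verma flag; it sends the Verma module $M(\mu)$ to $M(w_0 \cdot \mu)^\circledast$; and it sends the projective cover $P(\mu)$ to the indecomposable tilting module $T(w_0 \cdot \mu)$, which is self-dual. The descent to the parabolic setting rests on the key observation (which was in fact Arkhipov's original motivation for introducing the semi-regular bimodule) that the conjugation automorphism $\phi_{w_0}$ carries $\widetilde{\mathfrak{p}}$ onto the opposite parabolic $\mathfrak{p}^-$ of $\mathfrak{p}$, since $w_0 \in S_n$ reverses the block structure; meanwhile $S_{w_0} \otimes_{U(\mathfrak{g})} -$ converts $\mathfrak{p}^-$-local finiteness into $\mathfrak{p}$-local finiteness. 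Combining these two effects, $\Tw_{w_0}$ restricts to a well-defined functor $\widetilde{\mathcal O}^\La \to \mathcal O^\La$ and induces the claimed equivalence between the filtered subcategories.

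For part (i), I would compute the highest weight directly using the non-parabolic formula. The module $\widetilde{\cM}(A)$ is the parabolic Verma module induced from the finite-dimensional simple $\widetilde{\mathfrak{p}}$-module whose Borel-highest weight is $\sum_r (a_r + r - 1)\eps_r$, where $(a_1, \dots, a_n)$ is the reverse-column-reading of $A$. Applying $\Tw_{w_0}(M(\mu)) \cong M(w_0 \cdot \mu)^\circledast$, the image is the dual of the Verma module whose highest weight is the $w_0$-dot-shift of this weight. The crucial combinatorial check is that this $w_0$-dot-shift coincides with $\sum_r (b_r + r - 1)\eps_r$, where $(b_1, \dots, b_n)$ is the column-reading of $A$; this follows because the permutation $w_0$ reverses the block structure of $\widetilde{\mathfrak{p}}$ onto that of $\mathfrak{p}$, which combinatorially converts reverse-column-reading into column-reading. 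The resulting module is $\cM(A)^\circledast$ by the very definition of $\cM(A)$.

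For part (ii), I would apply the equivalence from the first paragraph to $\widetilde{\cP}(A)$. Since equivalences preserve indecomposability, $\Tw_{w_0}(\widetilde{\cP}(A))$ is an indecomposable module admitting a costandard filtration. Right-exactness of $\Tw_{w_0}$ together with the surjection $\widetilde{\cP}(A) \twoheadrightarrow \widetilde{\cM}(A)$ and part (i) yields a surjection $\Tw_{w_0}(\widetilde{\cP}(A)) \twoheadrightarrow \cM(A)^\circledast$, so $\cM(A)^\circledast$ sits at the top of a costandard filtration. Dualising this statement, and invoking the non-parabolic analogue $\Tw_{w_0}(P(\mu)) \cong T(w_0 \cdot \mu)$ of (ii) from \cite{ASt} to deduce self-duality, it follows that $\Tw_{w_0}(\widetilde{\cP}(A))$ is a self-dual indecomposable module possessing a standard filtration with $\cM(A)$ at the bottom. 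These properties characterize $\cT(A)$ up to isomorphism, proving the claim. The main obstacle throughout is the combinatorial bookkeeping in (i) matching the $w_0$-dot action with the passage from reverse-column-reading to column-reading; once this labelling is correctly pinned down, both parts of the lemma follow essentially formally from the non-parabolic case.
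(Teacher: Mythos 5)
The paper's own proof of this lemma is a one-line citation: it is, verbatim, ``a slight reformulation of \cite[Theorem 6.6]{Skipp}'', i.e.\ Soergel's theorem, which already operates in the parabolic setting and directly yields both the filtered equivalence and the identities (i)--(ii). Your plan is a genuinely different route: deduce everything from the non-parabolic statements for the full category $\mathcal O$ and then descend. That is a reasonable idea, but as written it has a real gap in (i). You apply the ordinary-Verma formula $\Tw_{w_0}(M(\mu))\cong M(w_0\cdot\mu)^\circledast$ directly to the parabolic Verma $\widetilde{\cM}(A)$, which is not a Verma module but a proper quotient of one. Right-exactness of $\Tw_{w_0}$ applied to $M(\mu')\twoheadrightarrow\widetilde{\cM}(A)$ gives only a surjection $\Tw_{w_0}(M(\mu'))\twoheadrightarrow\Tw_{w_0}(\widetilde{\cM}(A))$, and since the target you are aiming for, $\cM(A)^\circledast$, is a \emph{submodule} (not a quotient) of a dual Verma, this surjection does not by itself pin anything down. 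You would first need to know that $\Tw_{w_0}$ carries parabolic Vermas to dual parabolic Vermas --- which is exactly what the filtered equivalence in the first part of the lemma is for, but you only sketch that and then do not use it in part (i).

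There is also an outright error in the combinatorial step. You claim the $w_0\cdot$-shift of the reverse-column-reading weight coincides with the column-reading weight ``because $w_0$ reverses the block structure''. But $w_0$ is the longest element of $S_n$, i.e.\ the \emph{full} reversal, whereas passing from reverse-column-reading to column-reading is a reversal of the column \emph{blocks}. Reversing $(a'_1,\dots,a'_n)$ (reverse-column-reading) gives a sequence that differs from the column-reading by an extra reversal \emph{within} each column. Concretely, for a tableau with two columns reading down as $(y,x)$ and $(z)$, the column-reading is $(y,x,z)$, the reverse-column-reading is $(z,y,x)$, and its $w_0$-reversal is $(x,y,z)\ne(y,x,z)$. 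What is true is that the $w_0\cdot$-shifted weight lies in the same orbit under the Levi Weyl group $S_{n_1}\times\cdots\times S_{n_l}$ as the column-reading weight, and the column-reading weight is the Levi-dominant representative of that orbit; so your argument can be repaired by first establishing the parabolic Verma $\mapsto$ dual parabolic Verma statement and then observing that the highest weight of the target must be the Levi-dominant representative. But that sorting step is missing, and it is precisely these parabolic subtleties (together with the unproved filtered equivalence and the self-duality assertion in part (ii), which is asserted rather than derived) that the paper avoids by quoting Soergel's theorem directly.
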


\begin{proof}
This is a slight reformulation of 
\cite[Theorem 6.6]{Skipp} in our special case.
\end{proof}

We also need the following lemma showing that 
the functor $\Tw_{w_0}$ commutes nicely with tensoring with finite
dimensional modules.

\begin{Lemma}\label{tfc}
There
is a natural $\mathfrak{g}$-module isomorphism
$$
j_{M, X}: \Tw_{w_0}(M \otimes X) \stackrel{\sim}{\rightarrow} 
\Tw_{w_0}(M) \otimes X
$$
for every $\mathfrak{g}$-module $M$ and every
finite dimensional $\mathfrak{g}$-module $X$.
This isomorphism has the following additional properties.
\begin{itemize}
\item[(i)] Given also a finite dimensional $\mathfrak{g}$-module $Y$, 
the following diagram commutes:
\begin{equation*}
\begin{CD}
\Tw_{w_0}(M \otimes X \otimes Y)
&@>j_{M\otimes X, Y}>>&\Tw_{w_0}(M \otimes X) \otimes Y\\
@Vj_{M, X \otimes Y}VV&&@VVj_{M,X} \otimes \id_{Y} V\\
\Tw_{w_0}(M) \otimes X \otimes Y&@=&\Tw_{w_0}(M) \otimes X \otimes Y
\end{CD}
\end{equation*}
\item[(ii)]
The isomorphism $j_{M,X}$ commutes with the action of
$z \in Z(U(\mathfrak{g}))$ in the sense that
$j_{M,X} \circ \Tw_{w_0}(\la_z) = \la_z \circ j_{M,X}$,
where $\la_z$ denotes the $\mathfrak{g}$-module endomorphism defined by
left multiplication by $z$.
\end{itemize}
\end{Lemma}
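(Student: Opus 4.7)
\emph{Plan.} The construction of $j_{M,X}$ is essentially the content of \cite[Lemma 3.2]{ASt} (the compatibility of twisting functors with tensoring by finite-dimensional modules), adapted to our notation. The strategy is to pick a reduced expression $w_0 = s_{i_1} \cdots s_{i_N}$ and use the standard factorization $\Tw_{w_0} \cong \Tw_{s_{i_1}} \circ \cdots \circ \Tw_{s_{i_N}}$ to reduce to the case of a single simple reflection $\Tw_s$. In this case $\Tw_s(M)$ can be realized as $\phi_s^*\bigl((U_f/U) \otimes_{U(\mathfrak g)} M\bigr)$, where $U_f$ is the Ore localization of $U(\mathfrak g)$ at a negative simple root vector $f$. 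Since $f$ acts locally nilpotently on any finite-dimensional $X$, the coproduct expansion
$$
\Delta(f^n) = \sum_{k=0}^n \binom{n}{k}\, f^k \otimes f^{n-k}
$$
becomes a finite sum in its action on vectors of $M \otimes X$, and this permits one to move the formal powers $f^{-n}$ past $X$, yielding a natural isomorphism $\Tw_s(M \otimes X) \stackrel{\sim}{\rightarrow} \Tw_s(M) \otimes X$ by an explicit formula. Iterating along the reduced expression then defines $j_{M,X}$.

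For property (i), one checks that the two compositions arising from the two bracketings of $M \otimes X \otimes Y$ agree at each simple-reflection step, which reduces to the coassociativity of $\Delta$ together with bookkeeping of the formal series in $f^{-1}$ (these only involve finitely many nonzero terms once applied to $X \otimes Y$). For property (ii), the point is that left multiplication by a central element $z \in Z(U(\mathfrak g))$ acts on every object in sight through the left $U(\mathfrak g)$-action on the semi-regular bimodule $S_{w_0}$ (after passing through the various tensor products), whereas the isomorphism $j_{M,X}$ is built entirely from manipulations of the right $U(\mathfrak g)$-action on $S_{w_0}$ and the diagonal action on $M \otimes X$ via $\Delta$. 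These two actions commute, so $\lambda_z \circ j_{M,X} = j_{M,X} \circ \Tw_{w_0}(\lambda_z)$ follows by inspection.

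The main obstacle is making the manipulation of formal powers of $f^{-1}$ in the Ore localization rigorous and verifying that the resulting isomorphism does not depend on the choice of reduced expression, so that $j_{M,X}$ is canonically defined. Once these technical points are settled, properties (i) and (ii) are formal consequences of the naturality of the construction and of the Hopf-algebra identities used.
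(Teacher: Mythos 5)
Your overall structure agrees with the paper: existence of $j_{M,X}$ and property (i) come from \cite[Theorem 3.2]{ASt} (you cite it as Lemma 3.2; it is a Theorem), and for (ii) one factors $\Tw_{w_0}$ into twisting functors $\Tw_s$ along a reduced expression and argues one simple reflection at a time. However, your argument for (ii) has a real gap. You assert that ``left multiplication by $z$ acts on every object in sight through the left $U(\mathfrak g)$-action on $S_{w_0}$,'' but on $\Tw_{w_0}(M)\otimes X$ the endomorphism $\la_z$ is the diagonal action of $\Delta(z)$, which has components acting nontrivially on the finite-dimensional factor $X$ as well; it does not literally factor through the left action on $S_{w_0}$ alone. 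Likewise, $j_{M,X}$ is not built only from the right $U(\mathfrak g)$-action on $S_{w_0}$ and the diagonal on $M\otimes X$: the explicit formula $j^s_{M,X}\bigl(u(1\otimes(m\otimes e))\bigr)=\widetilde{\Delta}(u)\bigl((1\otimes m)\otimes e\bigr)$ from the proof of \cite[Theorem 3.2]{ASt} makes the left action appear through the extended coproduct $\widetilde{\Delta}(u)$. So ``left and right actions commute'' is not the operative reason, and the argument does not close.

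The fix is the computation actually carried out in the paper: with the explicit formula above, $(ii)$ for a single $\Tw_s$ reduces to the identity
$$
\widetilde{\Delta}(u)\,\Delta(z)\;=\;\Delta(z)\,\widetilde{\Delta}(u)
\qquad(u\in U_{(s)},\ z\in Z(U(\mathfrak g))),
$$
which holds because $\widetilde{\Delta}$ is an algebra homomorphism extending $\Delta$ and $z$ is central, so $\widetilde{\Delta}(u)\Delta(z)=\widetilde{\Delta}(uz)=\widetilde{\Delta}(zu)=\Delta(z)\widetilde{\Delta}(u)$. You should make the reduction to $\Tw_s$ explicit via your (i) (as the paper does), then invoke this identity rather than the informal left/right commuting heuristic. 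A final remark: you raise the issue of independence of the reduced expression, but this is not needed here since the existence and naturality of $j_{M,X}$ are simply quoted from \cite[Theorem 3.2]{ASt}; reproving that would be redundant.
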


\begin{proof}
All of this except for part (ii) follow at once from \cite[Theorem 3.2]{ASt}.
To deduce (ii), recall from \cite{ASt} that the functor
$\Tw_{w_0}$ is a composite of functors $\Tw_s$ for simple reflections
$s$ (taken in order corresponding to a reduced expression for $w_0$),
and then 
the isomorphism $j_{M,X}$ above is built from analagous isomorphisms
$$
j^s_{M,X}: \Tw_s(M \otimes X) \stackrel{\sim}{\rightarrow} \Tw_s(M) \otimes X
$$
for each simple reflection $s$. In view of this, it 
suffices to show that 
$j^s_{M,X} \circ \Tw_{s}(\lambda_z) = \lambda_z \circ j^s_{M,X}$
for each $s$. The explicit formula for $j^s_{M,X}$
from the proof of \cite[Theorem 3.2]{ASt} gives 
that 
$$
j^s_{M,X}(u (1 \otimes (m \otimes e)))
=\widetilde{\Delta}(u) ((1 \otimes m) \otimes e)
$$
for $u \in U_{(s)}, m \in M$ and $e \in X$.
Here, we are using the notation
from \cite{ASt}; in particular, $U_{(s)}$ is a certain Ore 
localization of $U$
and $\widetilde{\Delta}:U_{(s)} \rightarrow U_{(s)} \hat\otimes U_{(s)}$
is a certain homomorphism
extending the comultiplication $\Delta:U \rightarrow U \otimes U$.
Using this formula, 
the problem reduces to checking that
$$
j^s_{M,X}(u (1 \otimes z(m \otimes e)))
= 
\widetilde{\Delta}(u) \Delta(z) ((1 \otimes m) \otimes e)
$$
is equal to
$$
z j^s_{M,X}(u(1 \otimes (m \otimes e)))
= \Delta(z)\widetilde{\Delta}(u)((1 \otimes m) \otimes e).
$$
To see this, observe that
$$
\widetilde{\Delta}(u) \Delta(z)
= \widetilde{\Delta}(uz) = \widetilde{\Delta}(zu) = \Delta(z) \widetilde{\Delta}(u)
$$
as $z$ is central.
\end{proof}

Now we want to consider the analogue of the Schur-Weyl duality
from $\S$\ref{sswd} with the category $\mathcal O^\La$ replaced by
$\widetilde{\mathcal O}^\La$.
As in the first paragraph of
$\S$\ref{sswd}, there is a natural right action of the
degenerate affine Hecke algebra $H_d$
on the module $\widetilde{\cP}(A^\La) \otimes \cT_d$ commuting with the left action of $\mathfrak{g}$.
For $\alpha \in Q_+$ with $\height(\alpha) = d$, 
define $\widetilde{\cT}^\La_\alpha$ to be the projection
of $\widetilde{\cP}(A^\La) \otimes \cT_d$
onto the block $\widetilde{\mathcal O}^\La_\alpha$. 
This is again
a $(U(\mathfrak{g}), H_d)$-bimodule.

\begin{Lemma}\label{ph2}
The modules $\{\widetilde{\cP}(A)\:|\:A \in \RStd^\La_\alpha\}$
give a full set of prinjective indecomposable modules
in $\widetilde{\mathcal O}^\La_\alpha$.
Moreover,
$\widetilde{\cT}^\La_\alpha$ is a prinjective generator for 
$\widetilde{\mathcal O}^\La_\alpha$.
\end{Lemma}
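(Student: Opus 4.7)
The plan is to establish both assertions by direct analogy with Lemmas~\ref{pg0} and \ref{pg}, applied now to the twisted category $\widetilde{\mathcal O}^\La$. This category is a sum of blocks of parabolic category $\mathcal O$ for the parabolic $\widetilde{\mathfrak{p}}$, with all labelling conventions obtained from those for $\mathcal O^\La$ by replacing column-reading by reverse-column-reading. Under this switch, the set $\RStd^\La$ of reverse-standard tableaux from $\S$\ref{stv} plays the combinatorial role that $\Std^\La$ plays for $\mathcal O^\La$.

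For the first assertion, the characterization of prinjective indecomposables given by \cite[Theorem 4.8]{schur} (the reference cited in the proof of Lemma~\ref{pg0}) applies verbatim to $\widetilde{\mathcal O}^\La$. Translating its combinatorial condition through reverse-column-reading shows that $\widetilde{\cP}(A)$ is injective (equivalently, prinjective) if and only if $A \in \RStd^\La$, giving the first statement restricted to the block indexed by $\alpha \in Q_+$.

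For the second assertion, we follow the proof of Lemma~\ref{pg}. The ground-state tableau $A^\La$ has constant rows, so it is trivially reverse-standard; hence $\widetilde{\cP}(A^\La) \cong \widetilde{\cM}(A^\La) \cong \widetilde{\cL}(A^\La)$ is prinjective by the first assertion. Tensoring with a finite-dimensional $\mathfrak{g}$-module preserves both projectivity and injectivity (the functor is biadjoint to tensoring with the dual module), so $\widetilde{\cP}(A^\La) \otimes \cT_d$ is prinjective, and hence so is its block summand $\widetilde{\cT}^\La_\alpha$. Every indecomposable summand therefore has the form $\widetilde{\cP}(A)$ for some $A \in \RStd^\La_\alpha$ by the first assertion. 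The fact that every such $A$ actually occurs with multiplicity at least one follows from the analog for $\widetilde{\mathcal O}^\La$ of \cite[Corollary 4.6]{schur}, whose proof transfers without change.

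The main obstacle is essentially bookkeeping: one needs to confirm that the combinatorial condition appearing in the $\widetilde{\mathcal O}^\La$-analog of Lemma~\ref{pg0} really is $\RStd^\La$ rather than $\Std^\La$. This follows from the reverse-column-reading conventions defining $\widetilde{\cL}(A)$, $\widetilde{\cM}(A)$ and $\widetilde{\cP}(A)$, and parallels the development of $\widetilde{F}(\La)$ and $\RStd^\La$ in $\S$\ref{stv}.
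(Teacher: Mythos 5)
Your proof is correct and takes exactly the approach the paper intends: the paper's own proof is a one-line instruction to ``mimic the proofs of Lemmas~\ref{pg0} and \ref{pg}, using the reverse crystal structure from $\S$\ref{stv} instead of the usual crystal structure.'' You have simply fleshed out that instruction in the expected way, correctly identifying $\RStd^\La$ as the combinatorial set that replaces $\Std^\La$ under reverse-column-reading.
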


\begin{proof}
Mimic the proofs of Lemmas~\ref{pg0} and \ref{pg}
(i.e. 
\cite[Theorem 4.8]{schur} and \cite[Corollary 4.6]{schur}), using the reverse
crystal structure from $\S$\ref{stv} instead of the usual crystal structure.
\end{proof}

The following lemma 
gives the analogue of Theorem~\ref{swd0}
in the twisted setup; see also
\cite[Lemma 5.5]{cyclo} for a different approach yielding a slightly
more general result.

\begin{Theorem}\label{is}
There is a $\mathfrak{g}$-module isomorphism
$j_\alpha: \Tw_{w_0}(\widetilde{\cT}^\La_\alpha)\stackrel{\sim}{\rightarrow}
\cT^\La_\alpha$ such that the following diagram
of algebra homomorphisms commutes:
$$
\begin{CD}
&\!\!\!H_d &\\
\\
\End_{\mathfrak{g}}(\widetilde{\cT}^\La_\alpha)^\op
@>\sim >> &\End_{\mathfrak{g}}(\cT^\La_\alpha)^\op
\end{CD}
\begin{picture}(0,0)
\put(-111,2){\makebox(0,0){$\swarrow$}}
\put(-111,2){\line(1,1){18}}
\put(-47,2){\makebox(0,0){$\searrow$}}
\put(-47,2){\line(-1,1){18}}
\end{picture}
$$
where the vertical maps come from the $H_d$-actions
on $\widetilde{\cT}^\La_\alpha$ and on $\cT^\La_\alpha$, respectively,
and the bottom map is the
algebra isomorphism
$\theta \mapsto j_\alpha \circ \Tw_{w_0}(\theta) \circ j_\alpha^{-1}$.
Hence, the action of $H_d$ on $\widetilde{\cT}^\La_\alpha$
induces a canonical isomorphism between $H^\La_\alpha$
and $\End_{\mathfrak{g}}(\widetilde{\cT}^\La_\alpha)^\op$.
\end{Theorem}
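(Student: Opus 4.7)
The plan is to construct $j_\alpha$ by combining Lemma~\ref{twist}(ii) at $A = A^\La$ with the natural isomorphism of Lemma~\ref{tfc}, and then verify $H_d$-equivariance by decomposing the $x_1$-action via the quadratic Casimir. Since $A^\La$ is the unique element of $\Col^\La_0$, the blocks $\mathcal{O}^\La_0$ and $\widetilde{\mathcal{O}}^\La_0$ are each semisimple of length one; in particular $\widetilde{\cP}(A^\La) = \widetilde{\cT}(A^\La)$ and $\cP(A^\La) = \cT(A^\La)$. Lemma~\ref{twist}(ii) therefore provides a $\mathfrak{g}$-module isomorphism $k: \Tw_{w_0}(\widetilde{\cP}(A^\La)) \stackrel{\sim}{\rightarrow} \cP(A^\La)$. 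Setting $M := \widetilde{\cP}(A^\La)$, the composite
$$j \,:=\, (k \otimes \id_{\cT_d}) \circ j_{M,\cT_d} \,:\, \Tw_{w_0}(M \otimes \cT_d) \stackrel{\sim}{\longrightarrow} \cP(A^\La) \otimes \cT_d$$
of $k$ with $j_{M, \cT_d}$ from Lemma~\ref{tfc} is the ambient isomorphism.

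To cut $j$ down to $j_\alpha$, I note that $\widetilde{\mathcal{O}}^\La_\alpha$ and $\mathcal{O}^\La_\alpha$ lie over the same generalized central character $\chi_\alpha$. Indeed, a short Harish-Chandra computation shows that in $\eps$-coordinates the $\rho$-shifted highest weight of each $\cL(A)$ (resp.\ $\widetilde{\cL}(A)$) for $A \in \Col^\La_\alpha$ is the multiset $\{a+n-1 : a \text{ an entry of }A\}$, which depends only on $\La - \alpha$ and not on the reading order. Consequently the block decompositions $M \otimes \cT_d = \bigoplus_\alpha \widetilde{\cT}^\La_\alpha$ and $\cP(A^\La) \otimes \cT_d = \bigoplus_\alpha \cT^\La_\alpha$ coincide with the generalized central character decompositions. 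Since Lemma~\ref{tfc}(ii) shows that $j_{M, \cT_d}$ commutes with the action of every $z \in Z(U(\mathfrak{g}))$ and $k \otimes \id$ is $\mathfrak{g}$-linear, $j$ preserves these decompositions and restricts to the required isomorphism $j_\alpha : \Tw_{w_0}(\widetilde{\cT}^\La_\alpha) \stackrel{\sim}{\rightarrow} \cT^\La_\alpha$.

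The crux of the argument is that $j$ intertwines the right action of $H_d$. The factor $k \otimes \id_{\cT_d}$ is tautologically $H_d$-equivariant, since $H_d$ acts through $\mathfrak{g}$-module endomorphisms of the tensor product on both sides. For $j_{M, \cT_d}$, the case of each $s_r$ is immediate from the naturality of $j_{M, X}$ in $X$ applied to the $\mathfrak{g}$-endomorphism of $\cT_d$ that swaps the $r$th and $(r+1)$st factors. For $x_1$, which acts as $\Omega$ on the $M$-factor and the first $\cT$-factor, Lemma~\ref{tfc}(i) applied with $X = \cT$ and $Y = \cT_{d-1}$ reduces the problem to showing that $j_{M, \cT}$ intertwines the $\Omega$-action on $M \otimes \cT$ with that on $\Tw_{w_0}(M) \otimes \cT$. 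I exploit the classical identity
$$2\Omega = \Delta(C) - C \otimes 1 - 1 \otimes C$$
for the quadratic Casimir $C \in Z(U(\mathfrak{g}))$ and handle the three terms separately: the $\Delta(C)$-term by Lemma~\ref{tfc}(ii); the $1 \otimes C$-term trivially because $C$ acts as a scalar on the irreducible module $\cT$; and the $C \otimes 1$-term by naturality of $j_{M, \cT}$ in the variable $M$ applied to $\lambda_C : M \to M$, together with the fact that $\Tw_{w_0}(\lambda_C) = \lambda_C$ on $\Tw_{w_0}(M)$ (which holds because $\phi_{w_0}$ is an inner automorphism of $GL_n$ and so fixes $Z(U(\mathfrak{g}))$ elementwise).

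Finally, the bottom horizontal arrow of the displayed diagram is defined by $\theta \mapsto j_\alpha \circ \Tw_{w_0}(\theta) \circ j_\alpha^{-1}$; this is an algebra homomorphism by functoriality of $\Tw_{w_0}$ and makes the triangle commute by the $H_d$-equivariance established above. It is bijective because Lemma~\ref{twist} tells us $\Tw_{w_0}$ is fully faithful on the full subcategory of modules with standard filtration, and $\widetilde{\cT}^\La_\alpha$, being projective in $\widetilde{\mathcal{O}}^\La_\alpha$, belongs to that subcategory. The final clause of the theorem then follows by transporting the identification from Theorem~\ref{swd0} across this bottom isomorphism. I expect the principal obstacle to lie in the $x_1$-equivariance: although each of the three ingredients from the Casimir decomposition is individually tractable, combining them requires careful bookkeeping between Lemma~\ref{tfc}(i), Lemma~\ref{tfc}(ii), and the two naturalities of $j_{M,X}$.
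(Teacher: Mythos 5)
Your proposal follows essentially the same route as the paper's proof: compose $j_{M,\cT_d}$ from Lemma~\ref{tfc} with the isomorphism $\Tw_{w_0}(\widetilde{\cP}(A^\La))\cong\cP(A^\La)$ from Lemma~\ref{twist}(ii), reduce the $x_1$-intertwining to $d=1$ via Lemma~\ref{tfc}(i), decompose $\Omega$ via the quadratic Casimir, and handle the three terms by Lemma~\ref{tfc}(ii) and naturality. You spell out more than the paper does at two places, but both need a small correction.

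First, the restriction to $j_\alpha$: the decomposition you write, $M\otimes\cT_d=\bigoplus_\alpha\widetilde{\cT}^\La_\alpha$, is not generally valid when $I$ is bounded, since tensoring $\widetilde{\cP}(A^\La)$ with $\cT_d$ can produce summands whose generalized central character lies outside the list $\{\chi_\alpha\:|\:\alpha\in Q_+\}$. What is true, and what your Harish-Chandra computation actually proves, is that $\widetilde{\cT}^\La_\alpha$ is the $\chi_\alpha$-generalized-eigencomponent of $M\otimes\cT_d$, and likewise $\cT^\La_\alpha$ for $\cP(A^\La)\otimes\cT_d$, with the \emph{same} $\chi_\alpha$. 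Since $j$ is a $\mathfrak{g}$-isomorphism and $\Tw_{w_0}$ is additive and preserves generalized central characters, $j$ matches the $\chi_\alpha$-components, and that is all you need; but you should drop the claim that the sum over $\alpha$ exhausts the tensor product. Second, for the $C\otimes 1$ term you invoke $\Tw_{w_0}(\lambda_C)=\lambda_C$ and justify it by the inner-ness of $\phi_{w_0}$. That is a necessary ingredient but not the whole argument: one also needs that central elements commute with the semi-regular bimodule $S_{w_0}$ (equivalently, apply Lemma~\ref{tfc}(ii) with $X$ the trivial module), or, more cheaply, observe that $M=\widetilde{\cP}(A^\La)$ and $\Tw_{w_0}(M)\cong\cP(A^\La)$ are both irreducible with the same central character, so $C$ acts by the same scalar on both and $\Tw_{w_0}(\lambda_C)=\Tw_{w_0}(c\cdot\id)=c\cdot\id=\lambda_C$ by $\C$-linearity of the functor. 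With either repair the step is sound. Everything else --- the reduction via Lemma~\ref{tfc}(i), the naturality argument for $s_r$, the bijectivity of the bottom arrow via Lemma~\ref{twist}, and the final appeal to Theorem~\ref{swd0} --- matches the paper.
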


\begin{proof}
Set $M := \widetilde{\cP}(A^\La)$ for short.
Consider the isomorphism
$$
j_{M,\cT_d}:\Tw_{w_0}(M \otimes \cT_d)
\stackrel{\sim}{\rightarrow}
\Tw_{w_0}(M) \otimes \cT_d
$$
from Lemma~\ref{tfc}.
We have natural actions of $H_d$ on $\Tw_{w_0}(M \otimes \cT_d)$ 
and on 
$\Tw_{w_0}(M) \otimes \cT_d$;
the former means the action 
obtained by applying the functor $\Tw_{w_0}$ to the natural action of
$H_d$ on
$M \otimes \cT_d$.
We claim that the isomorphism
$j_{M,\cT_d}$ intertwines these two actions.
This is clear for the actions of each $w \in S_d$ 
by the naturality of $j$.
It remains to see that $j_{M, \cT_d}$ intertwines the two actions of $x_1$.
Note that the following diagram commutes by a special case of
Lemma~\ref{tfc}(i):
\begin{equation*}
\begin{CD}
\Tw_{w_0}(M \otimes \cT_d)
&@>j_{M\otimes \cT_1, \cT_{d-1}}>>&\Tw_{w_0}(M \otimes \cT_1) \otimes \cT_{d-1}\\
@Vj_{M, \cT_d}VV&&@VVj_{M,\cT_1} \otimes \id_{\cT_{d-1}} V\\
\Tw_{w_0}(M) \otimes \cT_d&@=&\Tw_{w_0}(M) \otimes \cT_1 \otimes \cT_{d-1},
\end{CD}
\end{equation*}
Using this, our problem reduces to checking
that
$j_{M,\cT_1}$ intertwines the two actions of $x_1$,
i.e. we may assume that $d=1$.
In that case, $x_1$ is defined simply by multiplication by
$\Omega \in U(\mathfrak{g}) \otimes U(\mathfrak{g})$.
Now we note that $\Omega$ can be written as 
$\Delta(z) + z' \otimes 1 + 1 \otimes z''$
for $z, z', z''\in Z(U(\mathfrak{g}))$.
The naturality of $j$ implies at once that 
$j_{M, \cT_1}$ intertwines the endomorphisms arising by multiplication by
 $z' \otimes 1$
and $1 \otimes z''$, so it remains to show that 
$j_{M, \cT_1}$ intertwines the endomorphisms arising by multiplication
by $\Delta(z)$. This follows from Lemma~\ref{tfc}(ii),
establishing the claim.

Now to prove the theorem, we observe by Lemma~\ref{twist} that
$\Tw_{w_0}(M) \cong \cP(A^\La)$. Composing the isomorphism $j_{M, \cT_d}$
with any such isomorphism, we get a $\mathfrak{g}$-module
isomorphism
$\Tw_{w_0}(\widetilde{\cP}(A^\La) \otimes 
\cT_d) \stackrel{\sim}{\rightarrow} \cP(A^\La) 
\otimes \cT_d$
intertwining the two actions of $H_d$. This restricts to give 
the desired isomorphism
$j_\alpha: \Tw_{w_0}(\widetilde{\cT}^\La_\alpha) \stackrel{\sim}{\rightarrow} 
\cT^\La_\alpha$.
Now consider the diagram from the statement of the lemma.
In view of Lemma~\ref{twist}, the functor
$\Tw_{w_0}$ defines an isomorphism
between 
$\End_{\mathfrak{g}}(\widetilde{\cT}^\La_\alpha)^{\op}$
and
$\End_{\mathfrak{g}}(\Tw_{w_0}(\widetilde{\cT}^\La_\alpha))^{\op}$,
which is why the bottom map in the diagram is an isomorphism.
Now the fact that the diagram commutes 
follows because $j_\alpha$ 
intertwines the two actions of $H_d$.
Finally, the last statement of the theorem
follows directly from Theorem~\ref{swd0}.
\end{proof}

Theorem~\ref{is} (and Theorem~\ref{swd0}) allows us henceforth to 
identify
\begin{equation}
H^\La_\alpha = \End_{\mathfrak{g}}(\cT^\La_\alpha)^{\op}
= \End_{\mathfrak{g}}(\widetilde{\cT}^\La_\alpha)^{\op}.
\end{equation}
Recall now the Schur functor $\pi$ from (\ref{sf1}).
Analogously, we introduce the functor
\begin{equation}\label{sf3}
\widetilde{\pi}:= \hom_{\mathfrak{g}}(\widetilde{\cT}^\La_\alpha,
?):\widetilde{\mathcal O}^\La_\alpha
\rightarrow \rep{H^\La_\alpha}
\end{equation}
for any $\alpha \in Q_+$.
The following lemma is the basic tool needed to deduce the main properties
about this twisted Schur functor
from the analogous properties involving $\pi$ from Theorem~\ref{swd}.

\begin{Lemma}\label{snow}
For $j_\alpha$ as in Theorem~\ref{is} and any $M \in \widetilde{\mathcal O}^\La_\alpha$, there is a natural $H^\La_\alpha$-module homomorphism
$\gamma_M:\widetilde{\pi}(M)
\rightarrow
\pi (\Tw_{w_0}(M))$,
such that $\gamma_M(f) = \Tw_{w_0}(f) \circ j_\alpha^{-1}$
for each $f \in \widetilde{\pi}(M)$.
If $M$ has a standard filtration then $\gamma_M$ is an isomorphism.
\end{Lemma}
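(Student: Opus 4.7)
The formula $\gamma_M(f) := \Tw_{w_0}(f) \circ j_\alpha^{-1}$ visibly defines a $\C$-linear map $\widetilde{\pi}(M) \to \pi(\Tw_{w_0}(M))$ that is natural in $M$. To check $H^\La_\alpha$-equivariance, write the left action of $h \in H^\La_\alpha$ on $\widetilde{\pi}(M) = \hom_{\mathfrak{g}}(\widetilde{\cT}^\La_\alpha, M)$ as $h \cdot f = f \circ \tilde\theta_h$, where $\tilde\theta_h \in \End_{\mathfrak{g}}(\widetilde{\cT}^\La_\alpha)$ is the endomorphism corresponding to $h$ under the identification of Theorem~\ref{is}. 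That theorem also supplies the crucial relation $\Tw_{w_0}(\tilde\theta_h) = j_\alpha^{-1} \circ \theta_h \circ j_\alpha$ for the corresponding endomorphism $\theta_h \in \End_{\mathfrak{g}}(\cT^\La_\alpha)$, and hence
$$
\gamma_M(h \cdot f) = \Tw_{w_0}(f) \circ \Tw_{w_0}(\tilde\theta_h) \circ j_\alpha^{-1} = \Tw_{w_0}(f) \circ j_\alpha^{-1} \circ \theta_h = h \cdot \gamma_M(f).
$$

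For the isomorphism claim, the plan is to work upward through prinjective, projective, and finally arbitrary standard-filtered modules. First take $M = \widetilde{\cT}^\La_\alpha$: this module is standard-filtered, because its indecomposable summands $\widetilde{\cP}(A)$ for $A \in \RStd^\La_\alpha$ are projective in the highest weight category $\widetilde{\mathcal O}^\La_\alpha$. Lemma~\ref{twist} then tells us $\Tw_{w_0}$ induces an algebra isomorphism from $\End_{\mathfrak{g}}(\widetilde{\cT}^\La_\alpha)$ onto $\End_{\mathfrak{g}}(\Tw_{w_0}(\widetilde{\cT}^\La_\alpha))$, and composing with the isomorphism $\End_{\mathfrak{g}}(\Tw_{w_0}(\widetilde{\cT}^\La_\alpha)) \cong \hom_{\mathfrak{g}}(\cT^\La_\alpha, \Tw_{w_0}(\widetilde{\cT}^\La_\alpha))$ given by precomposition with $j_\alpha^{-1}$ exhibits $\gamma_{\widetilde{\cT}^\La_\alpha}$ as a composite of two isomorphisms. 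Additivity of both $\widetilde{\pi}$ and $\pi \circ \Tw_{w_0}$, combined with Lemma~\ref{ph2}, propagates the conclusion to all prinjective $M$.

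To pass to an arbitrary projective $\widetilde{\cP}(A) \in \widetilde{\mathcal O}^\La_\alpha$, invoke the twisted analogue of the two-term coresolution $0 \to \widetilde{\cP}(A) \to \widetilde{I}_1 \to \widetilde{I}_2$ by prinjective modules from the proof of \cite[Theorem 6.10]{schur}. The functor $\widetilde{\pi}$ is exact since $\widetilde{\cT}^\La_\alpha$ is projective, and $\pi \circ \Tw_{w_0}$ also preserves the exactness of this sequence because each term is standard-filtered and the higher left-derived functors of $\Tw_{w_0}$ vanish on standard-filtered modules (the derived enhancement of Lemma~\ref{twist}). Naturality of $\gamma$ and the five lemma then give $\gamma_{\widetilde{\cP}(A)}$ as an isomorphism. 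Finally, for an arbitrary standard-filtered $M$ take a two-term projective resolution $P_1 \to P_0 \to M \to 0$ in $\widetilde{\mathcal O}^\La_\alpha$; right exactness of both functors, naturality of $\gamma$, and the four lemma force $\gamma_M$ to be an isomorphism, since $\gamma_{P_0}$ and $\gamma_{P_1}$ are already known to be such. The main technical point throughout is this exactness of $\Tw_{w_0}$ on standard-filtered sequences, which is ultimately why the standard-filtration hypothesis is needed in the statement.
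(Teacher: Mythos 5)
Your verification that $\gamma_M$ is an $H^\La_\alpha$-module homomorphism is essentially identical to the paper's computation, using the intertwining relation $\Tw_{w_0}(\widetilde\theta_h) = j_\alpha^{-1}\circ\theta_h\circ j_\alpha$ supplied by Theorem~\ref{is}.

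For the isomorphism claim, however, you have taken a long detour and overlooked that Lemma~\ref{twist} already gives the result in one step. The lemma says $\Tw_{w_0}$ is an \emph{equivalence} between the full subcategory of standard-filtered objects in $\widetilde{\mathcal O}^\La$ and the full subcategory of costandard-filtered objects in $\mathcal O^\La$. Equivalences are fully faithful, so since $\widetilde{\cT}^\La_\alpha$ has a standard filtration (it is a sum of projectives, by Lemma~\ref{ph2}) and $M$ has one by hypothesis, the map $\hom_{\mathfrak g}(\widetilde{\cT}^\La_\alpha,M)\to\hom_{\mathfrak g}(\Tw_{w_0}(\widetilde{\cT}^\La_\alpha),\Tw_{w_0}(M))$, $f\mapsto\Tw_{w_0}(f)$, is already a bijection; composing with the isomorphism $j_\alpha^{-1}$ finishes the proof. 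That is the paper's entire argument.

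Your replacement — bootstrapping from $M=\widetilde{\cT}^\La_\alpha$ through prinjectives, then projectives via a two-step coresolution, then general standard-filtered $M$ via a two-step projective resolution — is not only unnecessary but also has a genuine gap. In the step passing from prinjectives to projectives, you apply $\pi\circ\Tw_{w_0}$ to $0\to\widetilde{\cP}(A)\to\widetilde I_1\to\widetilde I_2$ and claim exactness is preserved because ``each term is standard-filtered.'' But the relevant short exact sequence is $0\to\widetilde{\cP}(A)\to\widetilde I_1\to C\to 0$ with $C=\operatorname{im}(\widetilde I_1\to\widetilde I_2)$, and you need $L_1\Tw_{w_0}(C)=0$. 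The module $C$ is a cokernel of a monomorphism between standard-filtered objects, and such cokernels need not have a standard filtration (the class of standard-filtered objects is closed under extensions and kernels of epimorphisms, not cokernels of monomorphisms). So your invocation of vanishing higher derived functors does not apply to $C$ without further argument. The direct full-faithfulness argument sidesteps all of this.
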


\begin{proof}
Take $h \in H^\La_\alpha$, and let $\theta_h$
(resp.\ $\widetilde{\theta}_h$) denote the endomorphism
of $\cT^\La_\alpha$ (resp.\ $\widetilde{\cT}^\La_\alpha$) defined
by right multiplication by $h$.
For any $f \in \widetilde{\pi}(M) = \hom_{\mathfrak{g}}(\widetilde{\cT}^\La_\alpha, M)$, we have that 
$hf = f \circ \widetilde{\theta}_h$.
Invoking Theorem~\ref{is} for the last equality, we get that
\begin{align*}
\gamma_M(hf) &= \gamma_M(f \circ \widetilde{\theta}_h)
=
\Tw_{w_0}(f \circ \widetilde{\theta}_h) \circ j_\alpha^{-1}\\
&= 
\Tw_{w_0}(f) \circ j_\alpha^{-1} \circ j_{\alpha} \circ \Tw_{w_0}(\widetilde{\theta}_h) \circ j_\alpha^{-1}=
\gamma_M(f) \circ \theta_h = h \gamma_M(f).
\end{align*}
This proves that $\gamma_M$ is an $H^\La_\alpha$-module homomorphism,
and it is clearly natural in $M$.
Finally, suppose that $M$ has a standard filtration.
As $\widetilde{\cT}^\La_\alpha$ has a standard filtration too (e.g. by
Lemma~\ref{ph2}),
Lemma~\ref{twist} implies that
the map $$
\hom_{\mathfrak{g}}(\widetilde{\cT}^\La_\alpha, M)
\rightarrow \hom_{\mathfrak{g}}(\Tw_{w_0}(\widetilde{\cT}^\La_\alpha),
\Tw_{w_0}(M))$$ 
defined by applying the functor $\Tw_{w_0}$ is an isomorphism.
Hence the map $\gamma_M$ is an isomorphism in this case.
\end{proof}

The following theorem gives analogues of some parts of Theorem~\ref{swd}
in the twisted setup; all the other parts have obvious
analogues too.
Recall the definitions of
$\widetilde{\cS}(A)$ and $\widetilde{\cD}(A)$
from (\ref{dualspecht}) and (\ref{trans}).

\begin{Theorem}\label{swdnew}
The following hold for $A \in \Col^\La_\alpha$:
\begin{itemize}
\item[(i)] $\widetilde{\pi}(\widetilde{\cM}(A))
\cong \widetilde{\cS}(A) \cong \cS(A)^\circledast$;
\item[(ii)] 
$\widetilde{\pi}(\widetilde\cL(A))$ is non-zero
if and only if $A \in \RStd_\alpha^\La$,
in which case
we have that $\widetilde{\pi}(\widetilde\cL(A)) 
\cong \widetilde{\cD}(A)
\cong \cD(A^{\downarrow})$.
\item[(iii)] $\widetilde{\pi}(\widetilde{\cP}(A))
\cong \pi(\cT(A))$, which is isomorphic
to $\cY(A^{\downarrow})$ in case $A \in \RStd^\La_\alpha$.
\end{itemize}
\end{Theorem}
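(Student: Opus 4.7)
The plan is to deduce all three parts from Lemma~\ref{snow}, Lemma~\ref{twist}, Theorem~\ref{duality}, and the previously established properties of the standard Schur functor $\pi$ collected in Theorem~\ref{swd}, Lemma~\ref{soc}, and Theorem~\ref{young}. Throughout, the central observation is that whenever $M \in \widetilde{\mathcal O}^\La_\alpha$ admits a standard filtration, Lemma~\ref{snow} yields an $H^\La_\alpha$-module isomorphism $\widetilde{\pi}(M) \cong \pi(\Tw_{w_0}(M))$, so the twisted Schur functor can be computed via the standard one after passing through Arkhipov's twisting functor.

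First I would prove (i). Since $\widetilde{\cM}(A)$ trivially has a standard filtration, Lemma~\ref{snow} gives $\widetilde{\pi}(\widetilde{\cM}(A)) \cong \pi(\Tw_{w_0}(\widetilde{\cM}(A)))$. Applying Lemma~\ref{twist}(i), this equals $\pi(\cM(A)^\circledast)$. Then Theorem~\ref{duality} moves the $\circledast$ outside the functor, yielding $\pi(\cM(A))^\circledast \cong \cS(A)^\circledast$ by Theorem~\ref{swd}(iii). The final identification $\cS(A)^\circledast \cong \widetilde{\cS}(A)$ is just (\ref{Aredual}).

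Next I would treat the main identity of (iii): since $\widetilde{\cP}(A)$ is projective and hence has a standard filtration, Lemma~\ref{snow} combined with Lemma~\ref{twist}(ii) gives $\widetilde{\pi}(\widetilde{\cP}(A)) \cong \pi(\Tw_{w_0}(\widetilde{\cP}(A))) \cong \pi(\cT(A))$. For the second identification in (iii), I would specialize to $A \in \RStd^\La_\alpha$ and set $B := A^\downarrow \in \Std^\La_\alpha$, so that $B^\uparrow = A$ under the mutually inverse bijections from $\S$\ref{stv}. By Lemma~\ref{soc}, $\cP(B) \cong \cT(B^\uparrow) = \cT(A)$, and Theorem~\ref{young}(ii) then gives $\pi(\cT(A)) \cong \pi(\cP(B)) \cong \cY(A^\downarrow)$.

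Finally I would handle (ii), which is the trickiest step because $\widetilde{\cL}(A)$ itself does not in general admit a standard filtration and so Lemma~\ref{snow} cannot be applied directly. For the non-vanishing criterion, I would argue that $\widetilde{\pi}(\widetilde{\cL}(A)) = \hom_{\mathfrak{g}}(\widetilde{\cT}^\La_\alpha, \widetilde{\cL}(A))$ is non-zero iff $\widetilde{\cL}(A)$ appears in the socle of some indecomposable summand of $\widetilde{\cT}^\La_\alpha$. By Lemma~\ref{ph2} these summands are exactly the $\widetilde{\cP}(B)$ for $B \in \RStd^\La_\alpha$, each occurring at least once, and each such $\widetilde{\cP}(B)$ is self-dual (the analogue of Lemma~\ref{pg0}(ii) obtained by mimicking the argument with the reverse crystal structure), so its socle equals its head $\widetilde{\cL}(B)$. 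Hence $\widetilde{\pi}(\widetilde{\cL}(A)) \neq 0$ precisely when $A \in \RStd^\La_\alpha$. When this holds, general quotient-functor theory (applicable via Theorem~\ref{is}, which identifies $H^\La_\alpha$ with $\End_{\mathfrak{g}}(\widetilde{\cT}^\La_\alpha)^{\op}$) tells us that $\widetilde{\pi}(\widetilde{\cL}(A))$ is the irreducible head of $\widetilde{\pi}(\widetilde{\cP}(A))$; by part (iii) this is the head of $\cY(A^\downarrow)$, which equals $\cD(A^\downarrow)$ by Theorem~\ref{young}(i). The remaining identification $\widetilde{\cD}(A) \cong \cD(A^\downarrow)$ is (\ref{trans}). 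The main obstacle, as indicated, is verifying cleanly that the twisted analogue of the quotient-functor formalism applies in $\widetilde{\mathcal O}^\La_\alpha$, but Lemma~\ref{ph2} and Theorem~\ref{is} supply exactly what is needed.
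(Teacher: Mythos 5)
Your proposal is correct and follows essentially the same route as the paper: parts (i) and (iii) are proved by applying Lemma~\ref{snow} to pass through the twisting functor, then Lemma~\ref{twist}, Theorem~\ref{duality}, Theorem~\ref{swd}(iii), (\ref{Aredual}), Lemma~\ref{soc} and Theorem~\ref{young}(ii)/\ref{swd}(ii), in precisely the order the paper uses. For (ii) you spell out the quotient-functor argument in a bit more detail than the paper (which simply says it goes ``exactly as in Theorem~\ref{swd}(i),(ii)''), but the underlying mechanism --- that Lemma~\ref{ph2} and Theorem~\ref{is} identify $\widetilde{\cT}^\La_\alpha$ as a prinjective generator whose endomorphism ring is $H^\La_\alpha$, so $\widetilde{\pi}(\widetilde{\cL}(A))$ is nonzero exactly when $A \in \RStd^\La_\alpha$ and, when nonzero, is the irreducible head of $\widetilde{\pi}(\widetilde{\cP}(A))\cong\cY(A^\downarrow)$, i.e.\ $\cD(A^\downarrow)$ --- is identical. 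One small wording quibble: $\hom_{\mathfrak{g}}(\widetilde{\cT}^\La_\alpha,\widetilde{\cL}(A))\neq 0$ detects $\widetilde{\cL}(A)$ in the \emph{head} of a summand, not the socle; this is harmless here because, as you yourself note, the prinjective summands $\widetilde{\cP}(B)$ are self-dual so head and socle coincide, but the clean formulation would have started from the head.
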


\begin{proof}
By Lemma~\ref{snow}, Lemma~\ref{twist}(i), Theorem~\ref{duality},
Theorem~\ref{swd}(iii) and (\ref{Aredual}),
we have that
$$
\widetilde{\pi}(\widetilde{\cM}(A))
\cong \pi(\Tw_{w_0}(\widetilde{\cM}(A)))
\cong \pi(\cM(A)^\circledast)
\cong \pi(\cM(A))^\circledast
\cong \cS(A)^\circledast
\cong \widetilde{\cS}(A),
$$
giving (i).
For (iii), we have by Lemma~\ref{snow} and Lemma~\ref{twist}(ii)
that
$$
\widetilde{\pi}(\widetilde{\cP}(A))
\cong \pi(\Tw_{w_0}(\widetilde{\cP}(A)))
\cong \pi(\cT(A)).
$$
Moreover if $A \in \RStd^\La_\al$ 
then $\cT(A) \cong \cP(A^{\downarrow})$ by Lemma~\ref{soc},
hence $\pi(\cT(A)) \cong \cY(A^{\downarrow})$ by Theorem~\ref{swd}(ii).
Finally to deduce (ii), note from Lemma~\ref{ph2}
that $\widetilde{\pi}(\widetilde{\cL}(A))$ is non-zero
if and only if 
$A \in \RStd^\La_\alpha$.
Moreover, assuming $A \in \RStd^\La_\alpha$,
we get that $\widetilde{\pi}(\widetilde{\cL}(A))$
is an irreducible $H^\La_\alpha$-module
exactly
as in Theorem~\ref{swd}(i), and
$\widetilde{\pi}(\widetilde{\cP}(A)) \cong \cY(A^{\downarrow})$ 
is its projective cover
exactly as in Theorem~\ref{swd}(ii).
Since $\cY(A^{\downarrow})$ is the projective cover of
$\cD(A^{\downarrow})$, we deduce that
$\widetilde{\pi}(\widetilde{\cL}(A))
\cong \cD(A^{\downarrow})$, which is isomorphic to
$\widetilde{\cD}(A)$ by (\ref{trans}).
\end{proof}

\subsection{Signed Young modules}\label{ssym}
For completeness, we want finally to explain briefly 
how to identify the modules 
in Theorem~\ref{swdnew}(iii) with signed Young modules.
We omit some of the details in the proofs here.
Assuming to start with that $I$ is arbitrary,
take any $A \in \Col^\La_\alpha$, and let
$\widetilde\cX(A)$ denote the left ideal of $H^\La_\alpha$
generated by the element
\begin{equation}\label{wt4}
\prod_{i=1}^{l-1}
\prod_{j=1}^{d_{i+1}+\cdots+d_l}
(x_j-m_i) \cdot
\sum_{w \in S_{(\la^{(l)})^t} \times\cdots\times S_{(\la^{(1)})^t}}
\operatorname{sgn}(w) w,
\end{equation}
where $\la  = (\la^{(1)},\dots,\la^{(l)}) 
:= \la(A)$ and $d_i := |\la^{(i)}|$.
This is the {\em reduced signed permutation module} as in
\cite[$\S$4]{M}.
The next lemma explains how to define the {\em signed Young module} $\widetilde\cY(A)$
 as a summand of $\widetilde\cX(A)$.

\begin{Lemma}\label{wdz}
There exists a unique (up to isomorphism) indecomposable
$H^\La_\alpha$-module $\widetilde\cY(A)$ such that
$$
\widetilde{\cX}(A) = \widetilde{\cY}(A) + (\ddagger)
$$
where $(\ddagger)$ denotes a direct sum of $\widetilde{\cY}(B)$'s 
for $B \in \Col^\La_\alpha$
with $\la(B) < \la(A)$ in the dominance ordering on multipartitions.
\end{Lemma}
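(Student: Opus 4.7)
The plan is to mirror the proof of Theorem~\ref{young} in the twisted setting of $\S$\ref{sark}. Since $\widetilde{\cX}(A)$ is independent of the choice of $I$, I would first reduce to the case when $I$ is bounded-below, so that the opposite parabolic category $\widetilde{\mathcal O}^\La_\alpha$, its projective indecomposables $\widetilde{\cP}(A)$, and the twisted Schur functor $\widetilde{\pi}$ from (\ref{sf3}) are all available. I would then define the signed Young module by
\[
\widetilde{\cY}(A) := \widetilde{\pi}(\widetilde{\cP}(A)),
\]
which by Theorem~\ref{swdnew}(iii) is isomorphic to $\pi(\cT(A))$. Since $\pi$ is fully faithful on tilting modules by Theorem~\ref{swd}(vi), there is an algebra isomorphism $\End_{H^\La_\alpha}(\widetilde{\cY}(A)) \cong \End_{\mathfrak{g}}(\cT(A))$, and the latter ring is local because $\cT(A)$ is indecomposable, so Fitting's lemma yields indecomposability of $\widetilde{\cY}(A)$.

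The core of the argument is to construct a signed analogue $\widetilde{\cZ}(A)$ of the divided power module (\ref{dpm}), built from the opposite parabolic $\widetilde{\mathfrak{p}}$ and an appropriate one-dimensional twist, with the symmetric tensors $Z^{\la^{(i)}}(V_i)$ replaced by exterior powers whose exponents are the column lengths of $\la^{(i)}$ (i.e.\ the parts of the transposed partitions $(\la^{(i)})^t$). Adapting \cite[Theorem 4.14]{schur} to this twisted setting, I expect to prove
\[
\widetilde{\cZ}(A) \cong \widetilde{\cP}(A) \oplus (*),
\]
where $(*)$ is a direct sum of $\widetilde{\cP}(B)$'s with $\la(B) < \la(A)$; the reversal of the dominance inequality compared with (\ref{y1}) arises because transposition reverses dominance on multipartitions. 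Next, adapting \cite[Theorem 6.9]{schur} to the antisymmetric setting should yield the crucial identification
\[
\widetilde{\cX}(A) \cong \hom_{\mathfrak{g}}(\widetilde{\cT}^\La_\alpha, \widetilde{\cZ}(A)) = \widetilde{\pi}(\widetilde{\cZ}(A)),
\]
with the antisymmetrizer and the factors $(x_j - m_i)$ appearing in (\ref{wt4}) coming naturally from the exterior powers and the character on $\widetilde{\mathfrak{p}}$ chosen in $\widetilde{\cZ}(A)$. Applying $\widetilde{\pi}$ to the above decomposition of $\widetilde{\cZ}(A)$ then produces the desired decomposition of $\widetilde{\cX}(A)$, and uniqueness of $\widetilde{\cY}(A)$ follows from the Krull-Schmidt theorem.

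The main obstacle will be the precise construction of $\widetilde{\cZ}(A)$ and the verification that it decomposes as stated and matches up with $\widetilde{\cX}(A)$ under $\widetilde{\pi}$. These are technical adaptations of the proofs of \cite[Theorems 4.14 and 6.9]{schur} to the opposite parabolic with exterior rather than symmetric tensors, and require careful bookkeeping with the transposed partitions and the reversal of dominance. Once these technical points are in place, the result drops out formally from the twisted Schur-Weyl machinery developed in $\S$\ref{sark}.
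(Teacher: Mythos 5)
Your approach is genuinely different from the paper's, and much heavier. The paper proves Lemma~\ref{wdz} with a short, elementary trick: working over $I = \Z$ so that the sign automorphism $\sigma$ of (\ref{sigmalocal}) is available, one compares the generators (\ref{xla}) and (\ref{wt4}) directly and sees that $\widetilde{\cX}(A) = \sigma^*(\cX(A^t))$. Setting $\widetilde{\cY}(A) := \sigma^*(\cY(A^t))$, the lemma falls straight out of Theorem~\ref{young}, with the dominance inequality flipped from $\la(B) > \la(A)$ to $\la(B) < \la(A)$ because the transpose bijection (\ref{bij}) is order-reversing on multipartitions. No Schur functors, no divided-power or exterior-power modules, and no $W$-algebra machinery are needed; the whole argument is a couple of lines.

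What you are proposing is essentially the paper's route to the \emph{later} Theorem~\ref{wedf}, namely to build a tilting-side analogue of the divided power module and push its decomposition through the (twisted) Schur functor. The paper's version of this module is $\cEx(A)$ in (\ref{epm}), built over the \emph{same} parabolic $\mathfrak{p}$ from exterior powers of the quotients $V^i$ and processed by the ordinary Schur functor $\pi$ via (\ref{remains}); you instead place an exterior-power object over the opposite parabolic $\widetilde{\mathfrak{p}}$ and apply $\widetilde{\pi}$ via Theorem~\ref{swdnew}. Either packaging should work (and has the genuine added value that it identifies $\widetilde{\cY}(A)$ with $\pi(\cT(A))$ at the same time, which is Theorem~\ref{wedf}), but it is not ``formal'': the paper explicitly warns that the analogue of \cite[Theorem 6.9]{schur} in this setting has additional complications, since the argument initially realizes $\pi(\cT(A))$ as a quotient of $H^\La_\alpha$ whereas $\widetilde{\cX}(A)$ is a left ideal, and one must invoke Theorem~\ref{duality}, self-duality of tiltings, and the symmetric-algebra structure on $H^\La_\alpha$ to reconcile the two. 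Your proposal leaves precisely these points open. As a proof of Lemma~\ref{wdz} alone, your plan is far more work than necessary and relies on technical verifications you have not supplied, whereas the $\sigma$-twist argument is both complete and short; the exterior-power machinery is better reserved for where the paper actually uses it, in Theorem~\ref{wedf}.
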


\begin{proof}
We may assume for the proof that $I = \Z$, so that the automorphism
$\sigma$ from (\ref{sigmalocal}) makes sense.
By (\ref{xla}) and (\ref{wt4}), we have that
$\widetilde{\cX}(A) = \sigma^*(\cX(A^t))$.
Setting
\begin{equation}\label{signedones}
\widetilde{\cY}(A) := \sigma^*(\cY(A^t))
\end{equation}
and noting that the bijection (\ref{bij}) is order-reversing with respect to the
dominance ordering, the lemma 
now follows from the first part of Theorem~\ref{young}.
\end{proof}

For the remainder of the subsection we assume that $I$ is bounded-below.
In the same spirit as (\ref{dpm}),
we define the {\em exterior power module}
\begin{equation}\label{epm}
\cEx(A) := U(\mathfrak{g}) \otimes_{U(\mathfrak{p})}
\left[
I^\La \otimes 
{\textstyle\bigwedge}^{(\la^{(l)})^t}(V^l)
\otimes\cdots\otimes {\textstyle\bigwedge}^{(\la^{(1)})^t} (V^1)\right].
\end{equation}
Here, $V^i$ denotes the quotient of the natural module
of column vectors by the $\mathfrak{p}$-submodule
$V_{i-1}$ defined just after (\ref{dpm}). 
Also for a vector space $E$ and a partition $\mu$ of $n$
we write $\bigwedge^\mu(E)$ for the quotient 
$\bigwedge^{\mu_1}(E) \otimes \bigwedge^{\mu_2}(E) \otimes\cdots$
of $E^{\otimes n}$.
By arguments similar to the proof of 
(\ref{y1}) (which is \cite[Theorem 4.14]{schur}),
one shows that
\begin{equation}\label{hr}
\cEx(A)  = \cT(A) + (\#)
\end{equation}
where $(\#)$ is a direct sum of $\cT(B)$'s for $B \in \Col^\La$
with $\la(B) < \la(A)$ in the dominance order;
the key point is that 
$R(\cT(A') \boxtimes \cT(A'')) \cong \cT(A)$
in the setup and notation of \cite[Corollary 4.12(i)]{schur}.
Moreover it is the case that
\begin{equation}\label{remains}
\widetilde{\cX}(A) \cong \hom_{\mathfrak{g}}(\cT^\La_\alpha, \cEx(A)).
\end{equation}
This is proved by the same techniques used
to prove (\ref{y2}) (which is \cite[Theorem 6.9]{schur}); 
in particular the proof goes via finite $W$-algebras.
(There are some additional complications in the present setting
since this argument realizes $\pi(\cT(A))$ initially as a quotient of
$H^\La_\alpha$, whereas $\widetilde\cY(A)$ is a submodule;
this is overcome by using Theorem~\ref{duality}, the self-duality
of the tilting module $\cEx(A)$, and the symmetric algebra structure on
$H^\La_\alpha$.)

\begin{Theorem}\label{wedf}
$\pi(\cT(A)) \cong \widetilde{\cY}(A)$.
\end{Theorem}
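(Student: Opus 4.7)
The plan is to combine the decomposition of the exterior power module $\cEx(A)$ recorded in (\ref{hr}) with the isomorphism (\ref{remains}) and the defining property of $\widetilde{\cY}(A)$ from Lemma~\ref{wdz}, then do a Krull-Schmidt comparison by induction on the dominance order on $\la(A)$.

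More precisely, applying the exact functor $\pi = \hom_{\mathfrak{g}}(\cT^\La_\alpha, -)$ to the decomposition $\cEx(A) = \cT(A) \oplus (\#)$ from (\ref{hr}) and invoking (\ref{remains}), I obtain
\[
\widetilde{\cX}(A) \cong \pi(\cT(A)) \oplus \bigoplus_{B} \pi(\cT(B))^{\oplus m_B}
\]
where the sum is over $B \in \Col^\La_\alpha$ with $\la(B) < \la(A)$. On the other hand Lemma~\ref{wdz} gives
\[
\widetilde{\cX}(A) \cong \widetilde{\cY}(A) \oplus \bigoplus_{B} \widetilde{\cY}(B)^{\oplus n_B}
\]
with the same restriction $\la(B) < \la(A)$.

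The key observation is that each summand $\pi(\cT(A))$ is indecomposable: by Theorem~\ref{swd}(vi) the Schur functor $\pi$ is fully faithful on tilting modules, so $\End_{H^\La_\alpha}(\pi(\cT(A))) \cong \End_{\mathfrak{g}}(\cT(A))$, and the latter is a local ring by Fitting's lemma since $\cT(A)$ is indecomposable. Hence both displayed decompositions of $\widetilde{\cX}(A)$ are Krull-Schmidt decompositions into indecomposables. Proceeding by induction on $\la(A)$ in the dominance order, I may assume $\pi(\cT(B)) \cong \widetilde{\cY}(B)$ for all $B$ with $\la(B) < \la(A)$; then Krull-Schmidt forces $\pi(\cT(A)) \cong \widetilde{\cY}(A)$ and pins down the multiplicities $m_B = n_B$ to boot. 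The base case (when $\la(A)$ is minimal in the dominance order relevant to $\alpha$) is automatic since there are no lower terms $(\#)$ or $(\ddagger)$ to worry about.

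The only step that requires genuine input beyond what is already set up is the indecomposability of $\pi(\cT(A))$, and this is handled cleanly by Theorem~\ref{swd}(vi); I do not expect any real obstacle. Note this also gives a proof of Theorem~\ref{swdnew}(iii) in a form that does not rely on the twisted Schur functor, reconciling it with the signed Young module point of view.
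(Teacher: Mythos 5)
Your proof is correct and takes essentially the same approach as the paper: the paper's own proof simply states that Theorem~\ref{wedf} "follows from (\ref{hr}), Lemma~\ref{wdz}, Theorem~\ref{swd}(vi) and (\ref{remains}), in exactly the same way that Theorem~\ref{young}(ii) was deduced," which is precisely the Krull--Schmidt-plus-dominance-induction argument you carry out explicitly. If anything you have made the implicit uniqueness/induction step more transparent than the paper bothers to, and your verification that $\pi(\cT(A))$ is indecomposable (full faithfulness of $\pi$ on tiltings plus Fitting's lemma on the local ring $\End_{\mathfrak g}(\cT(A))$) matches the corresponding step for $\pi(\cP(A))$ in the proof of Theorem~\ref{young}.
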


\begin{proof}
This follows from (\ref{hr}), Lemma~\ref{wdz},
Theorem~\ref{swd}(vi) and (\ref{remains}), in exactly the same way that
Theorem~\ref{young}(ii) was deduced from (\ref{y1}) in the proof
of that theorem.
\end{proof}

We say that $\widetilde{\cY}$ is a {\em signed Young generator}
for $H^\La_\alpha$ if it is a finite direct sum of the signed
Young modules $\widetilde{\cY}(A)$ for $A \in \Col^\La_\alpha$
with each appearing at least once.

\begin{Corollary}
If $\widetilde{\cY}$ is a signed Young generator
for $H^\La_\alpha$
 then
the category of finite dimensional left modules over the algebra
$\End_{H^\La_\alpha}(\widetilde{\cY})^{\op}$
is equivalent to the category $\widetilde{O}^\La_\alpha$.
Moreover $\End_{H^\La_\alpha}(\widetilde{\cY})^{\op}$ 
is the Ringel dual of the algebra 
$\End_{H^\La_\alpha}(\cY)^{\op}$ from Theorem~\ref{kos}.
\end{Corollary}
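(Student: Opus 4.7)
The plan is to exploit that the signed Young generator $\widetilde{\cY}$ has two natural incarnations: via the ordinary Schur functor $\pi$ applied to a tilting generator of $\mathcal O^\La_\alpha$, and via the twisted Schur functor $\widetilde\pi$ applied to a projective generator of $\widetilde{\mathcal O}^\La_\alpha$. These two interpretations give the Ringel duality statement and the equivalence statement respectively, and they are linked by Arkhipov's twisting functor $\Tw_{w_0}$.

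First I would establish the equivalence with $\widetilde{\mathcal O}^\La_\alpha$. Let $\widetilde{\cP} := \bigoplus_{A \in \Col^\La_\alpha} \widetilde{\cP}(A)$, a projective generator for $\widetilde{\mathcal O}^\La_\alpha$. Combining Theorem~\ref{swdnew}(iii) with Theorem~\ref{wedf} yields $\widetilde{\pi}(\widetilde{\cP}(A)) \cong \pi(\cT(A)) \cong \widetilde{\cY}(A)$, so (up to multiplicities of summands) $\widetilde{\cY}$ corresponds to $\widetilde{\pi}(\widetilde{\cP})$. I then need the twisted analogue of Theorem~\ref{swd}(vi), namely that $\widetilde{\pi}$ is fully faithful on projectives. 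This follows from Lemma~\ref{snow}, since projectives in $\widetilde{\mathcal O}^\La_\alpha$ admit standard filtrations, giving $\widetilde{\pi}(\widetilde{\cP}) \cong \pi(\Tw_{w_0}(\widetilde{\cP}))$. Now $\Tw_{w_0}$ is fully faithful on standard-filtered modules by Lemma~\ref{twist}, and $\pi$ is fully faithful on tilting modules by Theorem~\ref{swd}(vi); composing produces an algebra isomorphism $\End_{\mathfrak{g}}(\widetilde{\cP})^{\op} \stackrel{\sim}{\to} \End_{H^\La_\alpha}(\widetilde{\cY})^{\op}$. Since the category of finite dimensional modules over $\End_{\mathfrak{g}}(\widetilde{\cP})^{\op}$ is Morita equivalent to $\widetilde{\mathcal O}^\La_\alpha$, the first claim follows.

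For the Ringel duality statement, the key observation is that the same calculation also identifies $\End_{H^\La_\alpha}(\widetilde{\cY})^{\op}$ with $\End_{\mathfrak{g}}(\cT)^{\op}$, where $\cT := \Tw_{w_0}(\widetilde{\cP}) \cong \bigoplus_A \cT(A)$ is a tilting generator for $\mathcal O^\La_\alpha$ (using Lemma~\ref{twist}(ii)). By the very definition of Ringel duality, $\End_{\mathfrak{g}}(\cT)^{\op}$ is the Ringel dual of $\End_{\mathfrak{g}}(\cP)^{\op}$ for a projective generator $\cP$ of $\mathcal O^\La_\alpha$. On the other hand, the isomorphism (\ref{myg}) from the proof of Theorem~\ref{kos} identifies $\End_{\mathfrak{g}}(\cP)^{\op} \cong \End_{H^\La_\alpha}(\cY)^{\op}$. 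Putting these identifications side by side proves that $\End_{H^\La_\alpha}(\widetilde{\cY})^{\op}$ is Ringel dual to $\End_{H^\La_\alpha}(\cY)^{\op}$.

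The main subtlety is verifying the twisted analogue of Theorem~\ref{swd}(vi), i.e. that $\widetilde{\pi}$ is fully faithful on projectives. The elegant route is via Lemma~\ref{snow}, since projectives carry standard filtrations and $\Tw_{w_0}$ intertwines $\widetilde{\pi}$ with $\pi$ on such objects; this reduces the statement to Theorem~\ref{swd}(vi) applied to tilting modules in $\mathcal O^\La_\alpha$. No truly new ideas are needed, but some care is required to track compatibilities of endomorphism algebras under the composition of functors $\pi \circ \Tw_{w_0}$.
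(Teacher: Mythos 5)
Your proof is correct and takes essentially the same route as the paper: identify $\widetilde{\cY}$ with both $\pi(\cT)$ for a tilting generator $\cT$ and $\widetilde{\pi}(\widetilde{\cP})$ for a projective generator $\widetilde{\cP}$, then transport endomorphism algebras across using full faithfulness of $\pi$ on tiltings (Theorem~\ref{swd}(vi)) and of $\Tw_{w_0}$ on standard-filtered objects (Lemma~\ref{twist}), finishing with (\ref{myg}) from Theorem~\ref{kos}. The only difference is that you make the compatibility $\widetilde{\pi} \cong \pi \circ \Tw_{w_0}$ on standard-filtered modules explicit by invoking Lemma~\ref{snow}, whereas the paper leaves this implicit; this is a slightly more careful write-up of the same argument, not a different one.
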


\begin{proof}
By Theorems~\ref{wedf} and \ref{swdnew}(iii), there is a tilting generator
$\cT$ for $\mathcal O^\La_\alpha$
and a projective generator $\widetilde{\cP}$ for $\widetilde{\mathcal O}^\La_\alpha$ such that
$\pi(\cT) \cong \widetilde{\cY}\cong \widetilde{\pi}(\widetilde{\cP})$.
By Theorem~\ref{swd}(vi) and Lemma~\ref{twist}, we get
that
$$
\End_{\mathfrak{g}}(\cT)^{\op} \cong \End_{H^\La_\alpha}(\widetilde{\cY})^{\op}
\cong \End_{\mathfrak{g}}(\widetilde\cP)^{\op}.
$$
Recalling Theorem~\ref{kos},
the 
left hand algebra is the Ringel dual of the algebra 
$\End_{H^\La_\alpha}(\cY)^{\op}$,
while the category of finite dimensional modules over the
right hand algebra is obviously equivalent to 
$\widetilde{\mathcal O}^\La_\alpha$.
\end{proof}

\begin{Corollary}
The following hold
for any $A \in \Col^\La_\alpha$:
\begin{itemize}
\item[(i)] $\cY(A)^\circledast \cong \cY(A)$;
\item[(ii)] $\widetilde{\cY}(A)^\circledast \cong \widetilde{\cY}(A)$.
\end{itemize}
\end{Corollary}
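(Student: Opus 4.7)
The plan is to prove (ii) first, using the identification of signed Young modules as images of tilting modules under the Schur functor, and then to deduce (i) from (ii) by way of the sign automorphism of $\S$4.4. Both statements of the corollary are intrinsic to the algebra $H^\La_\alpha$, independent of the choice of index set $I$ containing the support of $\alpha$ and the $m_i$'s (since neither $H^\La_\alpha$ nor the generating element in (\ref{xla}) or (\ref{wt4}) depends on $I$), so one is free to enlarge $I$ as convenient in each part.

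For (ii), take $I$ to be bounded-below so that the Schur functor $\pi$ of (\ref{sf1}) is defined. By Theorem~\ref{wedf}, $\widetilde{\cY}(A) \cong \pi(\cT(A))$, where $\cT(A)$ is the indecomposable tilting module in $\mathcal O^\La$ labelled by $A$. Tilting modules are self-dual, so $\cT(A)^\circledast \cong \cT(A)$, and Theorem~\ref{duality} gives $\pi \circ \circledast \cong \circledast \circ \pi$, whence
$$
\widetilde{\cY}(A)^\circledast
\cong \pi(\cT(A))^\circledast
\cong \pi(\cT(A)^\circledast)
\cong \pi(\cT(A))
\cong \widetilde{\cY}(A).
$$

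For (i), take $I = \Z$, so that the sign automorphism $\sigma:H^{\La^t}_{\alpha^t}\stackrel{\sim}{\rightarrow}H^{\La}_{\alpha}$ of (\ref{sigmalocal}) (for the triple $(\La^t,\alpha^t,A^t)$ in place of $(\La,\alpha,A)$) is defined. Applying (\ref{signedones}) to this triple, and using $(A^t)^t = A$, yields $\widetilde{\cY}(A^t) \cong \sigma^*(\cY(A))$ as $H^{\La^t}_{\alpha^t}$-modules. Part (ii) applied to $A^t$ gives $\widetilde{\cY}(A^t)^\circledast \cong \widetilde{\cY}(A^t)$. Combining these with (\ref{sign1}) produces
$$
\sigma^*(\cY(A)^\circledast) \cong \sigma^*(\cY(A))^\circledast \cong \widetilde{\cY}(A^t)^\circledast \cong \widetilde{\cY}(A^t) \cong \sigma^*(\cY(A)),
$$
and since $\sigma^*$ is an equivalence of categories it reflects isomorphisms, so $\cY(A)^\circledast \cong \cY(A)$.

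The main obstacle is part (i): the direct tilting-module argument that handles (ii) has no analogue for Young modules, because the projective $\cP(A)$ is self-dual only when $A \in \Std^\La$ (by Lemma~\ref{pg0}), whereas $\cY(A)$ is defined for all $A \in \Col^\La_\alpha$. The sign-twist bypasses this by trading $\cY(A)$ for the signed Young module labelled by the transposed tableau $A^t$, where the tilting-module argument does apply.
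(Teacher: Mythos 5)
Your proposal is correct and follows essentially the same route as the paper: (ii) via Theorem~\ref{wedf}, self-duality of tilting modules, and Theorem~\ref{duality}; then (i) by twisting (ii) through $\sigma$ using (\ref{sign1}) and (\ref{signedones}). The only cosmetic difference is that you plug $A^t$ into (ii) and unwind, whereas the paper plugs in $A$ directly, and you spell out the index-set independence more explicitly.
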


\begin{proof}
We get that $\widetilde{\cY}(A)^\circledast
\cong \widetilde{\cY}(A)$ immediately from Theorem~\ref{wedf},
as the tilting module $\cT(A)$ is self-dual and the Schur functor
$\pi$ commutes with duality according to Theorem~\ref{duality}.
This proves (ii).
Then (i) follows from (ii) by twisting with the automorphism
$\sigma$, using (\ref{sign1}) and (\ref{signedones}).
\end{proof}


\begin{thebibliography}{AMR}
\bibitem[AnS]{ASt}
H. Andersen and C. Stroppel,
Twisting functors on $\mathcal O$,
{\em Represent. Theory} {\bf 7} (2003), 681--699.

\bibitem[AS]{AS}
T. Arakawa and T. Suzuki,
Duality between $\mathfrak{sl}_n(\mathbb{C})$ and the degenerate affine Hecke algebra, {\em  J. Algebra} {\bf  209} (1998), 288--304.

\bibitem[A1]{Ari}
S. Ariki,
On the decomposition numbers of the Hecke algebra of $G(m,1,n)$, 
{\em J. Math. Kyoto Univ.} {\bf 36} (1996), 789--808. 

\bibitem[A2]{Abranch}
S. Ariki,
 Proof of the modular branching rule for cyclotomic Hecke algebras, 
{\em J. Algebra} {\bf 306} (2006), 290--300.

\bibitem[AMR]{AMR}
S. Ariki, 
A. Mathas and H. Rui,
Cyclotomic Nazarov-Wenzl algebras,
{\em Nagoya Math. J.} {\bf 182} (2006),
47--134.

\bibitem[B]{Back}
E. Backelin,
Koszul duality for parabolic and singular category $\mathcal O$,
{\em Represent. Theory} {\bf 3} (1999), 139--152.

\bibitem[BB]{BB}
A. Beilinson and J. Bernstein,
Localisation de $\mathfrak g$-modules,
{\em C. R. Acad. Sci. Paris Ser. I Math.} {\bf 292} (1981), 15--18.

\bibitem[BGS]{BGS}
A. Beilinson, V. Ginzburg and W. Soergel,
Koszul duality patterns in representation theory,
{\em J. Amer. Math. Soc.} {\bf 9} (1996), 473--527.

\bibitem[BFK]{BFK}
J. Bernstein, I. Frenkel and M. Khovanov,
A categorification of the Temperley-Lieb algebra and Schur quotients of
$U(\mathfrak{sl}_2)$ via projective and Zuckerman functors.
{\em Selecta Math. (N.S.)} {\bf 5} (1999), 199--241.

\bibitem[BGG]{BGG}
J. Bernstein, I. M. Gelfand and S. I. Gelfand,
A category of $\mathfrak g$-modules,
{\em Func. Anal. Appl.} {\bf 10} (1976), 87--92.

\bibitem[B1]{dual}
J. Brundan,
Dual canonical bases and Kazhdan-Lusztig polynomials,
{\em J. Algebra} {\bf 306} (2006), 17-46.

\bibitem[B2]{cyclo}
J. Brundan,
Centers of degenerate cyclotomic Hecke algebras and parabolic category
$\mathcal O$, {\em Represent. Theory} {\bf 12} (2008), 236--259.

\bibitem[BDK]{GL}
J. Brundan, R. Dipper and A. Kleshchev,
Quantum linear groups and representations of
$GL_n({\mathbb F}_q)$,
{\em Mem. Amer. Math. Soc.}
{\bf 149} (2001), no. 706, 112 pages.

\bibitem[BK1]{BKtf}
J. Brundan and A. Kleshchev,
Translation functors for general linear and symmetric groups,
{\em Proc. London Math. Soc.}
{\bf 80} (2000), 75--106.

\bibitem[BK2]{rep}
J. Brundan and A. Kleshchev,
Representations of shifted Yangians and finite $W$-algebras,
{\em Mem. Amer. Math. Soc.}
{\bf 196} (2008), no. 918, 107 pages.

\bibitem[BK3]{schur}
J. Brundan and A. Kleshchev,
Schur-Weyl duality for higher levels,
{\em Selecta Math. (N.S.)} {\bf 14} (2008), 1--57.

\bibitem[BK4]{young}
J. Brundan and A. Kleshchev,
Blocks of cyclotomic Hecke algebras and Khovanov-Lauda algebras;
{\tt arXiv:0808.2032}.

\bibitem[BS]{BS2}
J. Brundan and C. Stroppel,
Highest weight categories arising from Khovanov's diagram algebra
III: category $\mathcal O$;
{\tt arXiv:0812.1090}.

\bibitem[BrK]{BrK}
J.-L. Brylinksi and M. Kashiwara,
Kazhdan-Lusztig conjecture and holonomic systems,
{\em Invent. Math.} {\bf 64} (1981), 387--410.

\bibitem[CG]{CG}
N. Chriss and V. Ginzburg,
{\em Representation Theory and Complex Geometry},
Birkh\"auser, 1997.

\bibitem[CR]{CR}
J. Chuang and R. Rouquier,
Derived equivalences for symmetric groups and $\mathfrak{sl}_2$-categorification, {\em Ann. of Math.} {\bf 167} (2008), 245--298.

\bibitem[DJM]{DJM}
R. Dipper, G. James and A. Mathas, 
Cyclotomic $q$-Schur algebras, {\em Math. Z.}  {\bf 229}
(1999), 385--416.

\bibitem[D]{D}
V. Drinfeld, 
Degenerate affine Hecke algebras and Yangians, {\em 
Func. Anal. Appl.} {\bf 20} 
(1986), 56--58.

\bibitem[F]{Fulton}
W. Fulton,
{\em Young Tableaux}, London Math. Soc., 1997.

\bibitem[G]{Gab}
P. Gabriel,
Des cat\'egories Ab\'eliennes,
{\em Bull. Soc. Math. France} {\bf 90} (1962), 323--448.

\bibitem[G1]{Gnote}
I.  Grojnowski, 
Representations of affine Hecke algebras (and affine quantum ${\rm GL}\sb n$) 
at roots of unity,
{\em Internat. Math. Res. Not.} {\bf 5} (1994), 215--217. 

\bibitem[G2]{Groj}
I. Grojnowski, Affine $\widehat{\mathfrak{sl}}_p$ controls the modular
representation theory of the symmetric group and related Hecke algebras;
{\tt math.RT/9907129v1}.

\bibitem[H]{H}
A. Henderson, Nilpotent orbits of linear and cyclic quivers and Kazhdan-Lusztig polynomials of type A, {\em Represent. Theory} {\bf 11} (2007), 95--121. 

\bibitem[J]{Jac}
N. Jacon,
On the parametrization of the simple modules for Ariki-Koike algebras at roots of unity,
{\em
J. Math. Kyoto Univ.} {\bf 44} (2004), 729--767. 

\bibitem[K1]{KaG}
M. Kashiwara, 
Global crystal bases of quantum groups,
{\em Duke Math. J.} {\bf 69} (1993), 455--485.

\bibitem[K2]{Ka}
M. Kashiwara, On crystal bases,
in: ``Representations of Groups,'' {\em CMS Conf. Proc.} {\bf 16}, Amer. Math. Soc., 1995, pp. 155--197.

\bibitem[KN]{KN}
M. Kashiwara and T. Nakashima, Crystal graphs for representations of the $q$-analogue of classical Lie algebras, {\em J. Algebra} {\bf 165} (1994), 295--345. 

\bibitem[KL1]{KL1}
D. Kazhdan and G. Lusztig,
Representations of Coxeter groups and Hecke algebras,
{\em Invent. Math.} {\bf 53} (1979), 165--184. 

\bibitem[KL2]{KL2}
D. Kazhdan and G. Lusztig, 
Proof of the Deligne-Langlands conjecture for Hecke algebras,
{\em Invent. Math.}  {\bf 87} (1987), 153--215. 

\bibitem[K]{Kbook}
A. Kleshchev, {\em
Linear and Projective Representations of Symmetric Groups}, Cambridge University Press, Cambridge, 2005. 

\bibitem[LLT]{LLT}
A. Lascoux, B. Leclerc and J.-Y. Thibon, Hecke algebras at roots of unity and crystal bases of quantum affine algebras, {\em Comm. Math. Phys.} {\bf 181} (1996), 205--263.

\bibitem[LS]{LS}
A. Lascoux and M.-P. Sch\"utzenberger,
Keys and standard bases,
in: ``Invariant Theory and Tableaux'',
D. Stanton ed., Springer 1990.

\bibitem[LT]{LT}
B. Leclerc and J.-Y. Thibon,
Canonical bases of $q$-deformed Fock spaces,
{\em Internat. Math. Res. Notices} {\bf 9} (1996), 447--456. 

\bibitem[L1]{Lc}
G. Lusztig,
Cuspidal local systems and graded Hecke algebras I,
{\em Inst. Hautes \'Etudes Sci. Publ. Math.} {\bf 67}
(1988), 145--202.

\bibitem[L2]{Lu}
G. Lusztig,
Quantum groups at roots of 1,
{\em Geom. Ded.} {\bf 35} (1990), 89--114.

\bibitem[L3]{Lu0}
G. Lusztig,
Quivers, perverse sheaves, and quantized enveloping algebras,
{\em J. Amer. Math. Soc.} {\bf 4} (1991), 365--421.

\bibitem[L4]{Lubook}
G. Lusztig,
{\em Introduction to Quantum Groups}, Birkh\"auser,
1993.

\bibitem[L5]{Ld}
G. Lusztig, 
Cuspidal local systems and graded Hecke algebras, II,
in: ``Representations of Groups,'' {\em CMS Conf. Proc.} {\bf 16}, Amer. Math. Soc., 1995, pp. 217--275.

\bibitem[M]{M}
A. Mathas,
Tilting modules for cyclotomic Schur algebras,
{\em J. reine angew. Math.} {\bf 562} (2003), 137--169.

\bibitem[R]{Rick}
J. Rickard,
Equivalences of derived categories for symmetric algebras,
{\em J. Algebra} {\bf 257} (2002), 460--481.

\bibitem[S]{Skipp}
W. Soergel,
Character formulas for tilting modules over Kac-Moody algebras,
{\em Represent. Theory} {\bf 2} (1998), 432--448. 

\bibitem[VV]{VV}
M. Varagnolo and E. Vasserot, 
On the decomposition matrices of the quantized Schur algebra, 
{\em Duke Math. J.} {\bf 100} (1999), 267--297. 

\bibitem[Z1]{Z1}
A. Zelevinsky,
$p$-adic analogue of the Kazhdan-Lusztig hypothesis,
{\em Funct. Anal. Appl.} {\bf 15} (1981), 83--92.

\bibitem[Z2]{Zel}
A. Zelevinsky,
Two remarks on graded nilpotent orbits,
{\em Russian Math. Surveys} {\bf 40} (1985), 249--250.

\end{thebibliography}
\end{document}